\documentclass[11pt]{article}

\usepackage[margin=1.15in]{geometry}
\usepackage{amsmath,amssymb,amsthm,mathtools}
\usepackage{mathrsfs}
\usepackage{microtype}
\usepackage[hidelinks]{hyperref}
\usepackage{enumitem}

\hypersetup{
 pdftitle={Three-term asymptotics for discrete Hardy--Rellich constants in high dimension},
 pdfauthor={Carlos Lizama},
 pdfsubject={Spectral asymptotics of discrete Hardy--Rellich inequalities in high dimension},
 pdfkeywords={discrete Hardy--Rellich inequality, optimal constants,
 high-dimensional asymptotics,
 degenerate spectral cluster, Feshbach--Schur map, hyperoctahedral symmetry,
 weighted torus estimate}
}

\newtheorem{theorem}{Theorem}[section]
\newtheorem{proposition}[theorem]{Proposition}
\newtheorem{lemma}[theorem]{Lemma}
\newtheorem{corollary}[theorem]{Corollary}

\theoremstyle{remark}
\newtheorem{remark}[theorem]{Remark}

\newcommand{\Z}{\mathbb Z}
\newcommand{\T}{\mathbb T}
\newcommand{\N}{\mathbb N}
\newcommand{\cH}{\mathcal H}
\newcommand{\cE}{\mathcal E}
\newcommand{\cO}{\mathcal O}

\newcommand{\BN}{\mathfrak B_N}
\newcommand{\ip}[2]{\left\langle #1,#2\right\rangle}
\newcommand{\norm}[1]{\left\lVert #1\right\rVert}
\newcommand{\1}{\mathbf 1}

\title{Three-term asymptotics for discrete\\
Hardy--Rellich constants in high dimension}
\author{Carlos Lizama\\
\small Departamento de Matem\'atica y Ciencia de la Computaci\'on,\\[-1mm]
\small Facultad de Ciencia, Universidad de Santiago de Chile,\\[-1mm]
\small Las Sophoras 173, Estaci\'on Central, Santiago, Chile\\[-1mm]
\small \texttt{carlos.lizama@usach.cl}}
\date{}

\begin{document}
\maketitle

\begin{abstract}
Sharp Hardy--Rellich constants are spectral thresholds for operators with
critical inverse-power potentials.  Let $C_\ell(N)$ denote the optimal
constant in the $\ell$th-order discrete Hardy--Rellich inequality on $\Z^N$.
Recent independent work established the leading high-dimensional behavior
$C_\ell(N)\sim2^\ell N^\ell$ for every fixed $\ell$.  We determine the next
two orders and prove
\[
 C_\ell(N)=2^\ell N^\ell+\gamma_\ell N^{\ell-1}
 +\delta_\ell N^{\ell-2}+O_\ell(N^{\ell-3}),
\]
with explicit coefficients $\gamma_\ell$ and $\delta_\ell$.  The central
difficulty is a degeneracy that grows with the dimension: after conjugation,
the leading operator is scalar on the $2N$ nearest neighbours of the origin.
We resolve this cluster using signed-permutation symmetry and an effective
operator on four lattice orbits.  Weighted torus estimates and
Feshbach--Schur reduction justify the finite-dimensional expansion inside the
full operator, while a residual bound and Temple's inequality give the stated
remainder.  In particular,
$C_1(N)=2N-4-20/(3N)+O(N^{-2})$.
\end{abstract}

\medskip
\noindent\textbf{2020 Mathematics Subject Classification.}
39A12, 26D10, 35A23, 47A10.

\smallskip
\noindent\textbf{Keywords.}
Discrete Hardy--Rellich inequality, optimal constant, spectral asymptotics,
high dimension, degenerate spectral cluster, Feshbach--Schur map,
hyperoctahedral symmetry.

\section{Introduction and main result}

Hardy inequalities quantify the competition between an energy and a critical
singular potential.  Their Euclidean prototype is
\[
 \int_{\mathbb R^N}|\nabla f|^2dx
 \geq\frac{(N-2)^2}{4}
 \int_{\mathbb R^N}\frac{|f|^2}{|x|^2}dx,
 \qquad f\in C_c^\infty(\mathbb R^N),
\]
where $N\geq3$.  Equivalently, $(N-2)^2/4$ is the largest coupling for which
the quadratic form of
\(
 -\Delta-\lambda|x|^{-2}
\)
is non-negative.  This threshold interpretation connects sharp Hardy
constants with criticality theory, singular Schr\"odinger operators,
eigenvalue estimates, and heat-kernel bounds; see, among many sources,
\cite{BalinskyEvansLewis,BogdanEtAl2019,DevyverFraasPinchover,
FrankLiebSeiringer}.  The same singularity that makes the constant sharp also
prevents attainment in the natural energy space and makes its approximation
delicate; a recent quantitative result in this direction is
\cite{IgnatZuazua2025}.

Although the differential inequality is now the most familiar formulation,
Hardy's original argument arose from a discrete inequality for sequences; see
\cite{Hardy1920,KufnerMaligrandaPersson} for the historical development.  The
higher-order theory begins with Rellich and Birman.  For example, when $N\geq5$,
\[
 \int_{\mathbb R^N}|\Delta f|^2dx
 \geq \left(\frac{N(N-4)}4\right)^2
 \int_{\mathbb R^N}\frac{|f|^2}{|x|^4}dx.
\]
More generally, higher-order Hardy--Rellich inequalities have sharp constants
given by explicit products; see \cite{DaviesHinz,Yafaev}.  For each fixed
differential order $\ell$, these constants grow like $N^{2\ell}$ as the
dimension tends to infinity.

The present paper concerns the corresponding high-dimensional problem on the
integer lattice.  Let
\[
 (L_Nu)(n)=2Nu(n)-\sum_{|m-n|=1}u(m)
\]
be the non-negative combinatorial Laplacian on $\Z^N$.  For a fixed integer
$\ell\geq1$, we consider the optimal constant
\begin{equation}\label{eq:Cintro}
 C_\ell(N)=
 \inf_{\substack{u\in C_c(\Z^N),\ u(0)=0\\u\ne0}}
 \frac{\ip{u}{L_N^\ell u}}
 {\displaystyle\sum_{n\ne0}|u(n)|^2/|n|^{2\ell}}.
\end{equation}
Thus $C_\ell(N)$ is the critical coupling for the non-negativity, on the
punctured lattice, of
\[
 L_N^\ell-\lambda |n|^{-2\ell}.
\]
This places \eqref{eq:Cintro} within the spectral theory of discrete
Schr\"odinger operators; for general background on graph Laplacians and
discrete Dirichlet forms, and for spectral estimates on $\Z^N$, respectively,
see \cite{KellerLenzWojciechowski,RozenblumSolomyak2009}.

The lattice is not merely a finite-difference proxy for the continuum.  It has
no dilation symmetry, the origin has precisely $2N$ nearest neighbours, and
there is no radial integration-by-parts formula reducing \eqref{eq:Cintro} to
a one-dimensional variational problem.  These geometric features change the
dimensional scale of the sharp constant and permit near-minimizers to
concentrate on the first few coordinate shells.  The resulting contrast is
already visible at leading order: the lattice constant grows like $N^\ell$,
not $N^{2\ell}$.

Several complementary forms of discrete Hardy theory have developed around
this phenomenon.  For prescribed inverse-power weights, continuous and
discrete inequalities were compared in \cite{KapitanskiLaptev}, while recent
fractional and non-fractional lattice estimates appear in \cite{Dyda2025}.
Criticality and positive-supersolution methods instead construct an optimal
weight adapted to a given graph operator; see
\cite{DevyverFraasPinchover,KPP2018,KPP2021} and the monograph
\cite{KellerLenzWojciechowski}.  On the Euclidean lattice, Keller and Lemm
obtained precise large-distance behavior of optimal Hardy weights
\cite{KellerLemm2023}; nonlinear graph extensions were subsequently developed
in \cite{Fischer2024}.  The optimal-weight problem and the scalar problem
\eqref{eq:Cintro} are related but distinct.  The former identifies the
critical decay at infinity, whereas the latter fixes the Euclidean
inverse-power weight and is sensitive to the global lattice geometry.

The one-dimensional higher-order problem has also undergone substantial
development.  Sharp and improved discrete Hardy and Rellich inequalities were
proved in \cite{GerhatKrejcirikStampach2025,Gupta2024,HuangYe2022,
HuangYe2024,KrejcirikStampach2022}; optimal
Hardy--Rellich--Birman weights of arbitrary order were obtained in
\cite{StampachWaclawek2025}.  These results reveal correction terms that are
invisible in the continuous inequality, but their one-dimensional mechanisms
do not directly resolve the growing multiplicity present in \eqref{eq:Cintro}.

The high-dimensional problem was initiated by Gupta, who proved that
$C_\ell(N)$ has order $N^\ell$ for every fixed $\ell$ \cite{Gupta2023}.  Thus
dimension enters with exponent $\ell$ on the lattice, instead of $2\ell$ in
the Euclidean problem.  Very recently, Gupta \cite{Gupta2026} and Huang--Ye
\cite{HuangYe2026} independently determined the leading coefficient:
\begin{equation}\label{eq:leading-intro}
 \lim_{N\to\infty}\frac{C_\ell(N)}{N^\ell}=2^\ell.
\end{equation}
Their proofs reduce the lattice inequality to singular weighted estimates on
the flat torus.  Gupta uses a ground-state representation and weighted
Bochner identities, while Huang--Ye combine first- and second-order weighted
identities with dimension-uniform concentration.  The value $2^\ell$ is also
the leading Rayleigh quotient of functions supported on the first shell
\[
 S_1=\{\pm e_j:1\leq j\leq N\}.
\]

Both recent papers use the same full-lattice convention as
\eqref{eq:Cintro}: the power of the lattice Laplacian is formed on
$\ell^2(\Z^N)$, and the restriction $u(0)=0$ is imposed only on the
admissible functions.  Thus their optimal constants coincide with
$C_\ell(N)$ as defined here.  This convention is different from first
restricting the Laplacian to the punctured lattice and then taking its
$\ell$th power; the latter removes paths that visit the origin at an
intermediate time.

The limit \eqref{eq:leading-intro} leaves a genuinely degenerate problem at the
next scale.  At leading order, all $2N$ nearest neighbours of the origin have
the same energy.  Consequently, the leading asymptotic does not select a
unique approximate ground state: after conjugation, the leading operator is
scalar on the entire space supported on $S_1$.  Determining the first
correction requires resolving this growing degeneracy, identifying the
symmetry component that contains the spectral bottom, and quantifying its
coupling to the rest of the lattice.  At the following order, several distinct
orbit types interact.  There is an additional feature for $\ell\geq2$:
although admissible functions vanish at the origin, paths contributing to
matrix elements of the full-lattice power $L_N^\ell$ may pass through the
origin at intermediate times.  This contribution is already visible at order
$N^{\ell-1}$.

The main result resolves this structure through two further orders.  To our
knowledge, it is the first asymptotic expansion beyond the leading coefficient
for the constants \eqref{eq:Cintro}.

\begin{theorem}\label{thm:main-intro}
For every fixed integer $\ell\geq1$, as $N\to\infty$,
\begin{equation}\label{eq:main-intro}
 C_\ell(N)=2^\ell N^\ell+\gamma_\ell N^{\ell-1}
 +\delta_\ell N^{\ell-2}+O_\ell(N^{\ell-3}),
\end{equation}
where
\begin{equation}\label{eq:gamma-intro}
 \gamma_\ell=3\ell(\ell-1)2^{\ell-2}
 -\frac{\ell^2 2^{2\ell}}{2^\ell-1},
\end{equation}
and
\begin{align}
 \delta_\ell=2^\ell\bigg[&
 -\frac34\binom{\ell}{2}+\frac{15}{4}\binom{\ell}{4}
 +\frac{2^\ell\ell\bigl(\ell-6\binom{\ell}{3}\bigr)}{2^\ell-1}
 \nonumber\\
 &+\frac{2^\ell\ell^2\binom{\ell}{2}(5\cdot2^\ell-3)}
 {2(2^\ell-1)^2}
 +\frac{2^{2\ell}\ell^4}{(2^\ell-1)^3}
 \nonumber\\
 &-\frac{3^{\ell+1}}{2(3^\ell-1)}
 \left(\binom{\ell}{2}
 -\frac{2^\ell\ell^2}{2^\ell-1}\right)^2
 -\frac{2^{2\ell}\ell^2}{4(2^{2\ell}-1)}\bigg].
 \label{eq:delta}
\end{align}
Equivalently, there are constants $N_0(\ell)$ and $A_\ell$ such that the
absolute value of the remainder in \eqref{eq:main-intro} is at most
$A_\ell N^{\ell-3}$ for every $N\geq N_0(\ell)$.
\end{theorem}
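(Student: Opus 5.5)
We propose to treat $C_\ell(N)$ as the bottom of the spectrum of a generalized eigenvalue problem and to reduce it to a finite effective problem. Let $W$ denote multiplication by $|n|^{-\ell}$ on the punctured lattice and $P$ the projection onto $\ell^2(\Z^N\setminus\{0\})$. Then $C_\ell(N)$ is the bottom of the spectrum of the conjugated operator $K_N=W^{-1}PL_N^\ell PW^{-1}$, and we rescale $H_N=N^{-\ell}K_N$.

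The first step is to expand the matrix elements of $H_N$ in powers of $N^{-1}$ by counting lattice paths of length $\ell$. On $S_1$, $|n|=1$, and the diagonal entry $\ip{\delta_{e}}{L_N^\ell\delta_e}$ is a polynomial in $N$ of degree $\ell$. Its leading term is $2^\ell N^\ell$, and its lower terms come from excursions that return, including those through the origin. Off-diagonal couplings fall into two kinds:
\begin{itemize}[leftmargin=*]
\item couplings inside $S_1$, through the origin or through second-shell points $e_i\pm e_j$;
\item couplings from $S_1$ to $S_2$-type orbits, namely $\{\pm e_i\pm e_j\}$, $\{\pm 2e_i\}$, and the three-coordinate shell.
\end{itemize}
Each of these carries explicit powers of $N$ and of the weights $|n|^{\ell}\in\{1,2^{\ell/2},3^{\ell/2}\}$. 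This is where denominators such as $2^\ell-1$, $3^\ell-1$ and $2^{2\ell}-1$ should arise: they are gaps between the diagonal values $2^\ell N^\ell/|n|^{2\ell}$-normalized entries on distinct shells.

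The second step uses invariance under the hyperoctahedral group $\BN$. The fully symmetric sector is spanned by orbit indicator functions, and we claim it contains the spectral bottom. To justify this, we check that the nonsymmetric isotypic components of $\ell^2(S_1)$ are shifted upward at order $N^{\ell-1}$. These components are the sign-odd functions and the mean-zero even functions, and neither sees the attractive contribution through the origin or the collective second-shell coupling.

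In the symmetric sector we restrict to four orbits: $S_1$, $\{\pm e_i\pm e_j\}$, $\{\pm 2e_i\}$, and $\{\pm e_i\pm e_j\pm e_k\}$. With normalized orbit vectors, the matrix of $H_N$ there is a $4\times4$ matrix whose entries have explicit expansions in $N^{-1/2}$ or $N^{-1}$. We then apply the Feshbach--Schur map with $Q$ the projection onto the $S_1$ orbit vector. This gives a scalar fixed-point equation
\[
 \lambda=h_{11}-\langle h_{1Q},(h_{QQ}-\lambda)^{-1}h_{Q1}\rangle,
\]
which we solve iteratively to order $N^{-2}$. The iteration produces $\gamma_\ell$ at first order. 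At second order it produces $\delta_\ell$, including the $(\cdot)^2$-type second-order terms and the $\ell^4/(2^\ell-1)^3$ term coming from expanding $\lambda$ inside the resolvent.

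The third step, and the main obstacle, is justifying this truncation inside the full operator, where infinitely many orbits are present. Two bounds are needed.
\begin{itemize}[leftmargin=*]
\item \emph{Upper bound.} We take the trial vector from the four-orbit Feshbach solution and compute its residual $\|(H_N-\lambda)\psi\|$. This should be $O(N^{-3/2})$ in the scaled units, so the quadratic-form error is $O(N^{-3})$.
\item \emph{Lower bound.} We need a uniform gap: the rest of the spectrum of $H_N$ on the complement of the $S_1$ symmetric vector lies above $1+c$ for some $c>0$ independent of $N$. For this we plan to use weighted torus estimates of the type used by Gupta and Huang--Ye, now with the explicit leading-order constant $2^\ell$, applied to functions orthogonal to the first shell or supported away from it. Far shells, with $|n|^2\ge4$, have weight ratio at least $4^{\ell}$ times smaller, which should give a gap after a Hardy-type bound with constant of order $N^\ell$.
\end{itemize}
Given this gap, Temple's inequality applied with the trial vector converts the residual bound into a matching lower bound with an $O(N^{\ell-3})$ error. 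Controlling the far-orbit contributions uniformly in $N$, so that orbits beyond the four do not enter before order $N^{\ell-3}$, is the delicate estimate. We would first try to prove it by bounding the path-count coupling from the four-orbit span to the remaining orbits by $O(N^{-1})$ in operator norm.
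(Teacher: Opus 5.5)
Your skeleton matches the paper's: the conjugated operator built from the full-lattice power $L_N^\ell$, the orbit basis, the four orbits $(1),(1,1),(1,1,1),(2)$, reduction to the $\BN$-invariant sector, then residual plus Temple. The genuine gap is the step you call the main obstacle. You leave it as a hope, and it is where the proof's real content lies.

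\textbf{The missing gap estimate.} You need a coercive bound for $v$ vanishing on $S_1$ whose constant exceeds $2^\ell$ by a definite amount; the paper proves $Q_1K_{\ell,N}Q_1\geq4^\ell N^\ell(1-O(N^{-1/2}))$. The Gupta and Huang--Ye torus estimates give only the constant $2^\ell$ for all admissible $u$. Imposing $\Psi\in\mathscr F_{2,N}$ does not improve them for free: the singular weights $\omega^\gamma$ mix Fourier levels, so the Poincar\'e threshold $q=2$ is lost along the weighted Hardy/Rellich chain. Your ``weight ratio'' heuristic does not address this. The missing idea is the paper's weighted fixed-gap estimate, in four steps. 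First, conjugate by $h=\rho^{\gamma/2}$ with $\rho=2\omega/N$. Second, use hypercontractivity and the concentration of $\rho$ at $1$ to show that $h\phi$ has only an $O(N^{-1/2})$ relative component on $|n|^2<q$ when $\phi\in\mathscr F_{q,N}$. Third, control the negative part of $\Delta h/h$ by the entropy inequality, the logarithmic Sobolev inequality and Hoeffding. Fourth, iterate weighted Hardy and Rellich bounds with ground-state exponent $\alpha=-\sqrt N$ to reach $(2q)^\ell N^\ell(1-O(N^{-1/2}))$.

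\textbf{Why the gap must be near-sharp, and other problems.} Your gap hypothesis is mis-stated. In the full space, the $2N-1$ non-invariant first-shell states sit at $(2N)^\ell+O(N^{\ell-1})$. So the spectrum on the complement of $\psi_{(1)}$ is not a fixed multiple of $N^\ell$ above the bottom, and Temple with a gap of order $N^{\ell-1}$ yields only $O(N^{\ell-2})$. Temple must therefore be applied inside the invariant sector, where any gap of order $N^\ell$ suffices.

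Sector selection then becomes a \emph{lower} bound for the even mean-zero and odd sectors. This comes from a Feshbach reduction of the full operator onto $\ell^2(S_1)$, using the resolvent of $Q_1K_{\ell,N}Q_1$. The margin $\gamma_\ell^\perp-\gamma_\ell=\ell2^{\ell-1}(1+\ell/(2^\ell-1))$ comes from two competing effects:
\begin{itemize}
\item the path through the origin is repulsive, not attractive as you wrote, and raises the invariant sector by $\binom{\ell}{2}(2N)^{\ell-1}$;
\item the net two-step contribution through $(1,1)$, which is negative, is twice as large in the invariant sector.
\end{itemize}
Hence the Schur term must be bounded with essentially the sharp gap $(2^\ell-1)(2N)^\ell$. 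For $\ell=2$, a bound $Q_1K_{2,N}Q_1\geq cN^2$ with $c$ below about $11.4$ (true value $16$) cannot exclude the non-invariant sectors. A mere ``$c>0$'' gap is not enough.

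Your fallback, an $O(N^{-1})$ operator-norm bound on the coupling out of the four-orbit span, is false. The blocks $(1,1,1)\to(1,1,1,1)$ and $(2)\to(2,1)$ have size $N^{\ell-1/2}$. Even if the bound held, it would give only $O(N^{\ell-2})$. The $O(N^{\ell-3/2})$ residual instead comes from two ingredients. Weight each selected orbit by its eigenvector size $N^{-\tau}$, with $\tau=0,\tfrac12,1,1$ on the four orbits. Then check that $\tau$ plus the weighted path cost to any omitted orbit is at least $3/2$, where dimensional edges cost $1/2$ and internal edges cost $1$.

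Finally, you need a parity argument to exclude an $N^{\ell-5/2}$ term in the four-orbit eigenvalue. Conjugating by the sign $(-1)^{k}$ on orbits with $k$ parts maps $t=N^{-1/2}$ to $-t$, so the eigenvalue is even in $t$.
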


The two terms in \eqref{eq:gamma-intro} have different spectral origins.  The
positive term comes from two-step returns to the first shell, including the
rank-one family of paths through the origin.  The negative term is the
second-order energy shift created by the leading coupling to the orbit
$(1,1)$.  At the next scale, the orbits $(1,1,1)$ and $(2)$ enter, together
with the next correction to the $(1,1)$ interaction.  Their combined effect is
the closed expression \eqref{eq:delta}.  For example, Theorem
\ref{thm:main-intro} gives
\[
 C_1(N)=2N-4-\frac{20}{3N}+O(N^{-2}).
\]

We now describe the proof at a conceptual level.  Write $R$ for multiplication
by $|n|$.  The quotient \eqref{eq:Cintro} becomes the spectral bottom
\[
 C_\ell(N)=\inf\sigma(K_{\ell,N}),\qquad
 K_{\ell,N}=R^\ell L_N^\ell R^\ell,
\]
on the punctured lattice.  The signed permutation group
$\BN=(\Z_2)^N\rtimes S_N$ preserves the operator and separates the
$2N$-dimensional first-shell space into three natural symmetry components.
This reduction identifies the component that can contain the lowest
eigenvalue.  Within the invariant component, lattice points are grouped by the
absolute values of their non-zero coordinates, so normalized orbit indicators
turn the low-energy problem into a weighted graph indexed by integer
partitions.  The relevant representation-theoretic background is recalled
from \cite{GeckPfeiffer}.

Exact path counting then shows that the four orbits
\[
 (1),\qquad(1,1),\qquad(1,1,1),\qquad(2)
\]
determine the eigenvalue through order $N^{\ell-2}$.  The resulting
four-dimensional effective operator produces both coefficients in
Theorem~\ref{thm:main-intro}.  This computation alone, however, gives only a
candidate expansion: one must still prove that the discarded lattice orbits
cannot change the bottom of the full spectrum at the stated order.

The principal analytic issue is therefore the control of the complement of
the four-orbit space.  Fourier transformation turns a shell restriction into
a lower bound on the Fourier level.  The singular torus weights needed for the
Hardy inequality mix those levels, so the ordinary Poincar\'e estimate loses
the required threshold.  We prove a weighted fixed-gap estimate that retains
it up to a relative $O(N^{-1/2})$ error.  This confines the low-energy
spectrum and supplies the resolvent bounds needed for a rigorous reduction.

Two Feshbach--Schur reductions, in the sense of
\cite{BachChenFrohlichSigal,DussonSigalStamm}, convert these bounds into scalar
spectral equations and separate the invariant eigenvalue branch from the
other first-shell components.  Finally, an orbit-path distance controls the
residual of the four-orbit eigenvector in the full invariant sector.  Temple's
inequality \cite{Temple} transfers the effective eigenvalue expansion to the
full operator with the error claimed in \eqref{eq:main-intro}.

The proof of Theorem~\ref{thm:main-intro} is completed in Section~7.  Section~2
constructs the self-adjoint realization, Sections~3 and~4 develop the orbit
algebra and the coefficient calculation, Section~5 resolves the first-shell
cluster, and Section~6 proves the fixed Fourier gap.  The paper concludes by
recording the full-shell path bounds used in the reductions and the exact
orbit moments underlying the effective matrix.

\section{Operator framework and conjugation}

Let
\[
 (L_Nu)(n)=2Nu(n)-\sum_{|m-n|=1}u(m),\qquad n\in\Z^N,
\]
be the non-negative combinatorial Laplacian.  For $\ell\in\N$, use the convention
\[
 (\nabla_j u)(n)=u(n+e_j)-u(n),\qquad 1\leq j\leq N,
 \qquad \nabla u=(\nabla_1u,\ldots,\nabla_Nu).
\]
With this normalization,
$\sum_{n,j}|\nabla_j u(n)|^2=\ip{u}{L_Nu}$.  We then set
\[
 D^\ell u=
 \begin{cases}
  L_N^{\ell/2}u,&\ell\text{ even},\\
  \nabla L_N^{(\ell-1)/2}u,&\ell\text{ odd},
 \end{cases}
\]
so that
\[
 \sum_{n\in\Z^N}|D^\ell u(n)|^2=\ip{u}{L_N^\ell u}.
\]
For finitely supported $u$ satisfying $u(0)=0$, define
\[
 C_\ell(N):=\inf_{u\ne0}
 \frac{\ip{u}{L_N^\ell u}}
 {\displaystyle\sum_{n\ne0}\frac{|u(n)|^2}{|n|^{2\ell}}}.
\]
All Hilbert spaces and test-function spaces are taken over $\mathbb C$; the
same optimal constant is obtained by restricting to real-valued functions.

Put $r(n)=|n|$ and let $R$ be multiplication by $r$.  Writing $u=R^\ell v$
gives
\[
 \sum_{n\ne0}\frac{|u(n)|^2}{|n|^{2\ell}}=\norm{v}_2^2.
\]
Let
\[
 \cH_{0,N}=\ell^2(\Z^N\setminus\{0\};\mathbb C)
\]
and identify it with the subspace of $\ell^2(\Z^N)$ obtained by extension by
zero at the origin.  On $C_c(\Z^N\setminus\{0\})$ define the non-negative
quadratic form
\begin{equation}\label{eq:form}
 \mathfrak k_{\ell,N}[v]
 =\ip{R^\ell v}{L_N^\ell R^\ell v}.
\end{equation}
The formal matrix
\[
 S_{\ell,N}v=R^\ell L_N^\ell R^\ell v,
 \qquad v\in C_c(\Z^N\setminus\{0\}),
\]
is a densely defined non-negative symmetric operator of finite propagation.
Its quadratic form is closable by \cite[Corollary VI.1.28]{Kato}.  The closure
is represented by a unique non-negative self-adjoint operator according to the
first representation theorem \cite[Theorem VI.2.6]{Kato}; this operator is the
Friedrichs extension of $S_{\ell,N}$ \cite[Theorem VI.2.11]{Kato}.  We write
$\overline{\mathfrak k}_{\ell,N}$ for that closure, with form domain
$\mathcal Q_{\ell,N}$, and denote by $K_{\ell,N}$ the unique non-negative
self-adjoint operator represented by it.  Thus the matrix identity
$K_{\ell,N}=R^\ell L_N^\ell R^\ell$ is used only on the core
$C_c(\Z^N\setminus\{0\})$; the operator domain of $K_{\ell,N}$ is determined
by the first representation theorem.  The variational characterization of the
Friedrichs extension gives
\begin{equation}\label{eq:Kdef}
 C_\ell(N)=\inf\sigma(K_{\ell,N})
 =\inf_{\substack{v\in C_c(\Z^N\setminus\{0\})\\v\ne0}}
 \frac{\mathfrak k_{\ell,N}[v]}{\norm v_2^2}.
\end{equation}
For $g\in\BN$, let $U_g$ be the unitary induced by the corresponding signed
coordinate permutation.  Each $U_g$ preserves
$C_c(\Z^N\setminus\{0\})$ and satisfies
$\mathfrak k_{\ell,N}[U_gv]=\mathfrak k_{\ell,N}[v]$ on this core.
It therefore preserves the closed form
$\overline{\mathfrak k}_{\ell,N}$ and its form domain.  Uniqueness in the
first representation theorem then gives
$U_gK_{\ell,N}=K_{\ell,N}U_g$.  Hence every isotypic component of the
signed-permutation action reduces the Friedrichs operator, not merely its
formal matrix on finitely supported functions.

Let $P$ be the projection onto any finite union of coordinate shells used
below.  Then $P$ maps the form core into a finite-dimensional subspace of that
core.  If $v_j$ converges to $v$ in the form norm, $Pv_j$ converges to $Pv$ in
$\ell^2$; on the finite-dimensional range of $P$ the form norm and the
$\ell^2$ norm are equivalent.  Hence $Pv_j$ converges to $Pv$ in the form
norm.  Thus both $P$ and $I-P$ preserve $\mathcal Q_{\ell,N}$.  Moreover,
$P\cH_{0,N}\subset\operatorname{Dom}(K_{\ell,N})$, because the formal matrix
maps a finite shell into a finite set, and the map
$(I-P)K_{\ell,N}P$ is bounded.  These facts justify the form compressions and
the finite-shell off-diagonal blocks used in Section~5.  The restriction to
$\cH_{0,N}$ also excludes the vector $\delta_0$,
which would otherwise be killed by both factors $R^\ell$.
Here $L_N^\ell$ is the power of the Laplacian on the whole lattice.  This detail is
essential for $\ell\geq2$: paths contributing to $L_N^\ell$ may pass through the
origin even though $u(0)=0$.

The distinction between endpoints and intermediate vertices is recorded in
the following observation.

\begin{remark}\label{rem:origin}
The condition $u(0)=0$ removes the origin as an endpoint, but it does not remove
lattice paths in the matrix elements of $L_N^\ell$ whose intermediate vertices
visit the origin.  In particular, the compression of $A_N^2$ to the first shell
contains the rank-one contribution denoted below by $\mathcal J_N$.  This
contribution enters the diagonal moment \eqref{eq:A2moment} and hence the term
$3\ell(\ell-1)2^{\ell-2}$ in $\gamma_\ell$.  Thus the full-lattice power must
be formed before the admissibility condition at the origin is imposed.
\end{remark}

\section{Hyperoctahedral orbit algebra}

Let $\BN=(\Z_2)^N\rtimes S_N$ be the signed permutation group, also known as
the hyperoctahedral group or the finite Coxeter group of type $B_N$; see
\cite{GeckPfeiffer} for its representation theory.  By the closed-form
argument in Section~2, it acts by signed coordinate permutations and commutes
with $K_{\ell,N}$.  A non-zero orbit is represented
by a partition
\[
 \lambda=(a_1,\ldots,a_k),\qquad a_1\geq\cdots\geq a_k\geq1.
\]
Write
\[
 s_\lambda=\sum_{j=1}^k a_j^2,
 \qquad
 \psi_\lambda=|\cO_\lambda|^{-1/2}\1_{\cO_\lambda}.
\]
We also use the empty partition $\varnothing$ for the orbit $\{0\}$ whenever
the origin occurs as an intermediate vertex of an adjacency path.  It is
never an endpoint of a matrix element of $K_{\ell,N}$, because the endpoint
factor $R^\ell$ vanishes there.  Part~\textnormal{(i)} of the next lemma
extends to $\lambda=\varnothing$ and gives
$\ip{A_N\psi_\varnothing}{\psi_{(1)}}=\sqrt{2N}$.
Let $A_N=2NI-L_N$ be the adjacency operator.

The first orbit identity gives all matrix entries of $A_N$ needed below.

\begin{lemma}\label{lem:adjacency}
In the normalized orbit basis, the only non-zero off-diagonal coefficients of
$A_N$ are the following.
\begin{enumerate}[label=\textnormal{(\roman*)}]
\item If $\mu=\lambda\cup\{1\}$, then
\[
 \ip{A_N\psi_\lambda}{\psi_\mu}
 =\sqrt{2(N-k)(m_1(\lambda)+1)}.
\]
\item If $\mu$ is obtained by replacing one occurrence of $a$ by $a+1$, then
\[
 \ip{A_N\psi_\lambda}{\psi_\mu}
 =\sqrt{m_a(\lambda)(m_{a+1}(\lambda)+1)}.
\]
\end{enumerate}
The reverse coefficients follow by symmetry.
\end{lemma}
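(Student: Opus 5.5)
The plan is to reduce the computation to counting edges between orbits. Since $A_N$ commutes with every $U_g$, it preserves the $\BN$-invariant subspace spanned by the $\psi_\lambda$. Moreover, $\ip{A_N\1_{\cO_\lambda}}{\1_{\cO_\mu}}$ equals the number $E(\lambda,\mu)$ of ordered pairs $(n,m)\in\cO_\lambda\times\cO_\mu$ with $|n-m|=1$. Hence
\[
 \ip{A_N\psi_\lambda}{\psi_\mu}=\frac{E(\lambda,\mu)}{\sqrt{|\cO_\lambda||\cO_\mu|}}
 =\sqrt{d(\lambda\to\mu)\,d(\mu\to\lambda)},
\]
where $d(\lambda\to\mu)$ is the number of neighbours in $\cO_\mu$ of a fixed point of $\cO_\lambda$. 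This number is independent of the chosen point by transitivity. The identity follows from double counting, $E=|\cO_\lambda|d(\lambda\to\mu)=|\cO_\mu|d(\mu\to\lambda)$, and it makes the symmetry of the reverse coefficients automatic.

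Next I will classify which orbits are adjacent. A neighbour $n\pm e_j$ changes exactly one coordinate by $\pm1$, so the multiset of absolute values changes in exactly one entry. There are three cases. A zero coordinate can become $\pm1$; this adds a part $1$ and gives case (i). A nonzero coordinate of absolute value $a$ can grow to $a+1$; this gives case (ii). The reverse moves (decreasing $a$ to $a-1$, or removing a $1$) are the reverse coefficients. No other partition changes are possible, which justifies the claim that these are the only nonzero off-diagonal entries. Diagonal entries vanish anyway, because changing one coordinate by $\pm 1$ changes $s_\lambda$ by an odd amount, so no orbit is adjacent to itself.

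The counts, for a fixed $n\in\cO_\lambda$ with $k$ nonzero coordinates, are as follows. In case (i), $d(\lambda\to\mu)=2(N-k)$: choose one of the $N-k$ zero coordinates and a sign. Conversely, from $m\in\cO_\mu$ we may set any one of its $m_1(\mu)=m_1(\lambda)+1$ coordinates of absolute value $1$ to zero, so $d(\mu\to\lambda)=m_1(\lambda)+1$. In case (ii), $d(\lambda\to\mu)=m_a(\lambda)$: each coordinate of absolute value $a$ is moved outward in exactly one direction. Conversely, $d(\mu\to\lambda)=m_{a+1}(\mu)=m_{a+1}(\lambda)+1$. Taking square roots of the products gives both formulas. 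For $\lambda=\varnothing$ with $k=0$ we get $d=2N$ and reverse count $1$, which yields $\sqrt{2N}$ as stated in the text.

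The only step requiring care is the boundary situation in case (ii) when $a=1$: increasing a $1$ to $2$ must not be confused with the move in case (i). The two cases produce different target partitions, and the count $m_1(\lambda)$ is still correct, so there is no overlap. This is bookkeeping rather than a genuine obstacle.
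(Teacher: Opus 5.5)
Your proposal is correct and follows essentially the same route as the paper: the double-counting identity $\ip{A_N\psi_\lambda}{\psi_\mu}=\sqrt{d_{\lambda\mu}d_{\mu\lambda}}$, followed by the same neighbour counts $2(N-k)$ and $m_1(\lambda)+1$ in case (i), and $m_a(\lambda)$ and $m_{a+1}(\lambda)+1$ in case (ii). Your explicit classification of single-coordinate moves and the parity argument for vanishing diagonal entries spell out the ``only non-zero'' clause more carefully than the paper does, but they do not change the argument.
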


\begin{proof}
For adjacent orbits, let $d_{\lambda\mu}$ be the number of neighbours in
$\cO_\mu$ of one point of $\cO_\lambda$.  This number is independent of the
chosen point because $\BN$ acts transitively on each orbit and preserves
adjacency.  Counting the edges between the two orbits from both endpoints gives
\[
 |\cO_\lambda|d_{\lambda\mu}
 =|\cO_\mu|d_{\mu\lambda}.
\]
On the other hand, the definition of the normalized orbit vectors yields
\[
 \ip{A_N\psi_\lambda}{\psi_\mu}
 =\frac{|\cO_\lambda|d_{\lambda\mu}}
 {\sqrt{|\cO_\lambda||\cO_\mu|}}
 =\sqrt{d_{\lambda\mu}d_{\mu\lambda}}.
\]

Suppose first that $\mu=\lambda\cup\{1\}$ and that $\lambda$ has $k$ parts.
Starting from a point of $\cO_\lambda$, one may choose any of the $N-k$ zero
coordinates and either sign, so $d_{\lambda\mu}=2(N-k)$.  Conversely, a point
of $\cO_\mu$ has $m_1(\lambda)+1$ coordinates of absolute value one that can
be moved to zero, and hence
$d_{\mu\lambda}=m_1(\lambda)+1$.  Their product gives part~(i).

For part~(ii), an outward move is obtained by choosing one of the
$m_a(\lambda)$ coordinates of absolute value $a$ and increasing its absolute
value by one without changing its sign.  Thus
$d_{\lambda\mu}=m_a(\lambda)$.  The reverse move may be made at any of the
$m_{a+1}(\lambda)+1$ coordinates of absolute value $a+1$ in $\mu$.  Therefore
$d_{\mu\lambda}=m_{a+1}(\lambda)+1$, which proves the second formula.  Since
$A_N$ is self-adjoint, interchanging $\lambda$ and $\mu$ gives all reverse
coefficients.
\end{proof}

An edge in the orbit graph is called \emph{dimensional} if it adds or removes a
part equal to $1$ and \emph{internal} otherwise.  For a path $\pi$, write
$j(\pi)$ for its length and $a(\pi)$ for the number of dimensional edges, and
put
\begin{equation}\label{eq:weighted-distance}
 d_*(\lambda,\mu)
 =\min_\pi\left(j(\pi)-\frac{a(\pi)}2\right),
\end{equation}
where the minimum is over paths joining $\lambda$ and $\mu$.

The weighted distance converts the orbit combinatorics into uniform powers of
$N$.

\begin{lemma}\label{lem:pathbound}
Fix $J$ and two finite sets of partitions.  Uniformly for $0\leq j\leq J$ and
for all partitions $\lambda,\mu$ in those sets or reachable from them in at
most $J$ steps,
\begin{equation}\label{eq:Ajbound}
 \left|\ip{A_N^j\psi_\lambda}{\psi_\mu}\right|
 \leq C_J N^{a_j(\lambda,\mu)/2},
\end{equation}
where $a_j(\lambda,\mu)$ is the largest number of dimensional edges among
length-$j$ paths from $\lambda$ to $\mu$; the left side is zero if no such path
exists.  Consequently, for fixed $\ell$ and bounded $s_\lambda,s_\mu$,
\begin{equation}\label{eq:Kpathbound}
 \left|\ip{K_{\ell,N}\psi_\lambda}{\psi_\mu}\right|
 \leq C_\ell N^{\ell-d_*(\lambda,\mu)}.
\end{equation}
\end{lemma}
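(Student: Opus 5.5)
Both bounds come from expanding matrix elements as sums over paths in the orbit graph and using Lemma~\ref{lem:adjacency} to control the weight of each edge. The orbit subspace spanned by the $\psi_\lambda$ (including $\psi_\varnothing$) is invariant under $A_N$. It is the $\BN$-invariant sector, and $A_N$ commutes with the group action, so $A_N\psi_\lambda$ is a finite combination of normalized orbit vectors. Hence
\[
 \ip{A_N^j\psi_\lambda}{\psi_\mu}
 =\sum_{\pi}\prod_{e\in\pi}w_e ,
\]
where the sum runs over length-$j$ paths $\lambda=\nu_0,\nu_1,\ldots,\nu_j=\mu$ in the orbit graph. Each step is either a diagonal step ($\nu_{i+1}=\nu_i$, weight $\ip{A_N\psi_{\nu_i}}{\psi_{\nu_i}}$) or an edge, with coefficient given by Lemma~\ref{lem:adjacency}.

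I would then bound the factors one by one.
\begin{enumerate}[label=\textnormal{(\alph*)}]
\item The diagonal coefficient vanishes, because $A_N$ changes exactly one coordinate by $\pm1$ and so always changes $|n|^2$ by an odd amount, which moves every point off its own orbit. Consequently diagonal steps never occur.
\item A dimensional edge has weight $\sqrt{2(N-k)(m_1+1)}\le C\sqrt N$. This holds because all partitions reachable within $J$ steps of the fixed finite sets have boundedly many parts and bounded multiplicities.
\item Internal edges have weight $\sqrt{m_a(m_{a+1}+1)}\le C_J$, and the same holds for the edge to $\varnothing$ (where $k=0$, giving $\sqrt{2N}$, which is still a dimensional edge).
\end{enumerate}
The number of length-$j$ paths is bounded by a constant depending on $J$, since each partition has boundedly many neighbours in that finite region. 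Each path therefore contributes at most $C N^{a(\pi)/2}\le CN^{a_j(\lambda,\mu)/2}$, and summing over paths gives \eqref{eq:Ajbound}. If no path exists, the sum is empty.

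For \eqref{eq:Kpathbound}, I would write $K_{\ell,N}=R^\ell(2N-A_N)^\ell R^\ell$. Since $R\psi_\lambda=\sqrt{s_\lambda}\,\psi_\lambda$, the bound on $s_\lambda,s_\mu$ lets me factor out the endpoint weights:
\[
 \ip{K_{\ell,N}\psi_\lambda}{\psi_\mu}
 =(s_\lambda s_\mu)^{\ell/2}\sum_{j=0}^{\ell}\binom{\ell}{j}(2N)^{\ell-j}(-1)^j
 \ip{A_N^j\psi_\lambda}{\psi_\mu}.
\]
Intermediate vertices, including $\varnothing$, carry no $R$ factors, which is consistent with Remark~\ref{rem:origin}. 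Applying \eqref{eq:Ajbound}, each term is at most
\[
 CN^{\ell-j+a_j/2}\le CN^{\ell-(j-a_j/2)}.
\]
For a path $\pi$ of length $j$ achieving $a_j$, we have $j-a_j/2=j(\pi)-a(\pi)/2\ge d_*(\lambda,\mu)$ by \eqref{eq:weighted-distance}. Terms with no path vanish, so the maximum over $j\le\ell$ gives $C_\ell N^{\ell-d_*}$.

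The main obstacle is the uniformity in (b) and (c). I need the parts count $k$, the multiplicities, and the number of paths to stay bounded independently of $N$ on the relevant region. This follows because each step changes the number of parts by at most one and $s_\nu$ by an odd amount bounded in $J$. One should also check that no hidden $N$-dependence enters through multiplicities, which is handled by the same finiteness of the reachable set.
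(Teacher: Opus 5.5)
Your proof is correct and follows essentially the same route as the paper: you expand $\ip{A_N^j\psi_\lambda}{\psi_\mu}$ as a sum over orbit paths, use Lemma~\ref{lem:adjacency} on the $N$-independent finite reachable region to bound dimensional edges by $O(N^{1/2})$ and internal edges by $O(1)$, and then insert this into the binomial expansion of $R^\ell L_N^\ell R^\ell$ together with $j-a/2\geq d_*(\lambda,\mu)$. Your explicit checks that the orbit span is $A_N$-invariant and that diagonal steps vanish by parity are clarifications the paper leaves implicit; note also that the edge $\varnothing$--$(1)$, with weight $\sqrt{2N}$, belongs under your case (b) rather than (c), as you yourself indicate.
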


\begin{proof}
For the fixed initial sets and fixed $J$, only finitely many partitions can be
reached in at most $J$ steps.  Their lengths and multiplicities are therefore
bounded by constants depending on $J$ but not on $N$.  Lemma
\ref{lem:adjacency} then shows that a dimensional edge has coefficient at most
$C_JN^{1/2}$, while an internal edge has coefficient at most $C_J$.

Expanding the matrix product $A_N^j$ in the orbit basis expresses
$\ip{A_N^j\psi_\lambda}{\psi_\mu}$ as a sum over all length-$j$ orbit paths
from $\lambda$ to $\mu$.  A path with $a$ dimensional edges is bounded by
$C_JN^{a/2}$.  The number of such paths is bounded solely in terms of $J$,
because every vertex in the relevant finite orbit graph has bounded degree.
Taking the largest possible value of $a$ proves \eqref{eq:Ajbound}; if no path
exists, every summand is absent and the matrix entry is zero.

Since $R^\ell$ is constant on $\cO_\lambda$ with value
$s_\lambda^{\ell/2}$, we have
\[
 \ip{K_{\ell,N}\psi_\lambda}{\psi_\mu}
 =s_\lambda^{\ell/2}s_\mu^{\ell/2}
 \sum_{j=0}^\ell(-1)^j\binom\ell j(2N)^{\ell-j}
 \ip{A_N^j\psi_\lambda}{\psi_\mu}.
\]
For a contributing path of length $j$ with $a$ dimensional edges, the
corresponding term is $O_\ell(N^{\ell-j+a/2})$.  By definition,
$j-a/2\geq d_*(\lambda,\mu)$ for every such path.  Each term is therefore
$O_\ell(N^{\ell-d_*(\lambda,\mu)})$, and only finitely many values of $j$ and
paths occur.  The bounded endpoint factors are absorbed into the constant,
which proves \eqref{eq:Kpathbound}.
\end{proof}

We use the four orbit types
\begin{equation}\label{eq:fourorbits}
 (1),\qquad(1,1),\qquad(1,1,1),\qquad(2).
\end{equation}
Let $M_{\ell,N}$ be the compression of $K_{\ell,N}$ to their span and let
$\mu_{\ell,N}=\lambda_{\min}(M_{\ell,N})$.

The four-orbit compression already determines the leading term and its first
correction.  Its third coefficient is stated and proved separately in the
next section, after all matrix entries needed at that order have been
collected.

\begin{proposition}\label{prop:ritz}
For every fixed $\ell\geq1$,
\begin{equation}\label{eq:ritzleading}
 \mu_{\ell,N}
 =2^\ell N^\ell+\gamma_\ell N^{\ell-1}
  +O_\ell(N^{\ell-2}),
\end{equation}
where
\begin{equation}\label{eq:gamma}
 \gamma_\ell
 =3\ell(\ell-1)2^{\ell-2}
  -\frac{\ell^2 2^{2\ell}}{2^\ell-1}.
\end{equation}
In particular,
\[
 \gamma_1=-4,\qquad
 \gamma_2=-\frac{46}{3},\qquad
 \gamma_3=-\frac{324}{7},\qquad
 \gamma_4=-\frac{1936}{15}.
\]
The variational principle therefore gives the upper bound
\begin{equation}\label{eq:upper}
 C_\ell(N)\leq
 2^\ell N^\ell+\gamma_\ell N^{\ell-1}+O_\ell(N^{\ell-2}).
\end{equation}
\end{proposition}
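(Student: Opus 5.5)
The plan is to compute the entries of the $4\times4$ matrix $M_{\ell,N}$ to the needed order with the identity
\[
 \ip{K_{\ell,N}\psi_\lambda}{\psi_\mu}
 =s_\lambda^{\ell/2}s_\mu^{\ell/2}\sum_{j=0}^\ell(-1)^j\binom\ell j(2N)^{\ell-j}\ip{A_N^j\psi_\lambda}{\psi_\mu}
\]
from the proof of Lemma~\ref{lem:pathbound}, together with Lemma~\ref{lem:adjacency}. I would then run a Schur-complement (Feshbach) reduction onto $\psi_{(1)}$, exploiting the spectral gap between the first diagonal entry and the others.

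\textbf{Step 1 (diagonal entry at $(1)$).} Since $\ip{A_N\psi_{(1)}}{\psi_{(1)}}=0$, the first correction comes from $j=2$. Here $\ip{A_N^2\psi_{(1)}}{\psi_{(1)}}=\sum_\nu\ip{A_N\psi_{(1)}}{\psi_\nu}^2$, summed over $\nu=\varnothing,(1,1),(2)$. By Lemma~\ref{lem:adjacency} and the remark on $\psi_\varnothing$, the three terms are $2N$, $2\cdot2(N-1)$ and $1$. The origin term is exactly the rank-one contribution of Remark~\ref{rem:origin}. Hence $\ip{A_N^2\psi_{(1)}}{\psi_{(1)}}=6N+O(1)$, and
\[
 M_{(1),(1)}=2^\ell N^\ell+3\ell(\ell-1)2^{\ell-2}N^{\ell-1}+O(N^{\ell-2}),
\]
since $\binom\ell2 2^{\ell-2}\cdot6=3\ell(\ell-1)2^{\ell-2}$. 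The $j\ge3$ terms are $O(N^{\ell-2})$ by \eqref{eq:Ajbound}: a closed path of length $3$ has at most two dimensional edges, and a length-$4$ path has at most four, giving orders $N^{\ell-3+1}$ and $N^{\ell-4+2}$.

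\textbf{Step 2 (remaining entries).} The diagonal entries at $(1,1),(1,1,1),(2)$ are $s^\ell(2N)^\ell(1+O(N^{-1}))$, that is $4^\ell N^\ell$, $6^\ell N^\ell$ and $8^\ell N^\ell$ to leading order. All of these exceed $2^\ell N^\ell$ by a multiple of $N^\ell$, which gives the gap. For the coupling between $(1)$ and $(1,1)$, the $j=1$ term dominates:
\[
 b=M_{(1),(1,1)}=-2^{\ell/2}\ell(2N)^{\ell-1}\cdot2\sqrt{N}\,(1+O(N^{-1/2}))=-\ell\,2^{3\ell/2}N^{\ell-1/2}(1+O(N^{-1/2})).
\]
The other couplings are smaller by \eqref{eq:Kpathbound}. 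We have $d_*((1),(1,1,1))=1$, $d_*((1),(2))=1$ and $d_*((1,1),(2))\ge1$, so these entries are $O(N^{\ell-1})$, while $d_*((1,1),(1,1,1))=1/2$ gives an entry of order $N^{\ell-1/2}$.

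\textbf{Step 3 (Schur complement).} Write $M_{\ell,N}=N^\ell\widetilde M$ and let $\widetilde B$ be the lower $3\times3$ block. Its eigenvalues lie at least $c>0$ above $2^\ell+O(N^{-1})$ for large $N$, since its off-diagonal entries are $O(N^{-1/2})$. Standard min-max plus this gap shows that $\mu=\mu_{\ell,N}/N^\ell$ is the unique eigenvalue near $2^\ell$, and it solves
\[
 \mu=\widetilde M_{11}-\widetilde v^{*}(\widetilde B-\mu)^{-1}\widetilde v,
 \qquad \widetilde v=(\widetilde b,O(N^{-1}),O(N^{-1})).
\]
Expanding the resolvent, the cross terms and the $O(N^{-1})$ components contribute $O(N^{-2})$. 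The main term is
\[
 \frac{\widetilde b^2}{4^\ell-2^\ell}+O(N^{-3/2})=\frac{\ell^2 2^{3\ell}N^{-1}}{2^\ell(2^\ell-1)}+O(N^{-3/2}),
\]
which equals $\frac{\ell^22^{2\ell}}{2^\ell-1}N^{-1}+O(N^{-3/2})$. Combining this with Step~1 gives \eqref{eq:ritzleading} with remainder $O(N^{\ell-3/2})$ at first pass.

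\textbf{Main obstacle.} The hard step is sharpening that remainder from $O(N^{\ell-3/2})$ to $O(N^{\ell-2})$. I would expect $\mu$ to have an expansion in integer powers of $N$, because half-integer powers appear only through $\widetilde b$, and $\widetilde b$ enters quadratically. I would check this by tracking parity: every closed orbit path uses an even number of odd-weight contributions, so only integer powers survive. Concretely, $b=N^{\ell-1/2}(\beta_0+\beta_1N^{-1}+\dots)$, where $\sqrt{N-1}=\sqrt N(1+O(N^{-1}))$ and the $j=3$ term is smaller by a full factor $N^{-1}$. Likewise the $(1,1)$--$(1,1,1)$ coupling carries a $\sqrt N$ that only ever appears squared or multiplied with another $\sqrt N$.

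Finally, the numerical values $\gamma_1,\dots,\gamma_4$ follow by substitution, and \eqref{eq:upper} follows from \eqref{eq:Kdef}. The reason is that the normalized eigenvector of $M_{\ell,N}$ is a finitely supported test function in $C_c(\Z^N\setminus\{0\})$.
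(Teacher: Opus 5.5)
Your proposal is correct and is essentially the paper's argument: binomial expansion of $L_N^\ell$, the exact moment $6N-3$ with the origin path included, the $(1)$--$(1,1)$ coupling of order $N^{\ell-1/2}$, and a Schur-complement reduction across the $O(N^\ell)$ gap. The only cosmetic difference is that the paper first discards $(1,1,1),(2)$ and then applies the explicit $2\times2$ eigenvalue formula, whereas you eliminate all three remote orbits at once.

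What you call the main obstacle is in fact immediate, and you already give the reason at the end. The even-$j$ terms of that coupling vanish by bipartiteness, so the coupling equals its leading term up to $O(N^{\ell-3/2})$, as the paper records; your Step~3 then gives the $O(N^{\ell-2})$ remainder directly, with no parity argument needed at this order (the paper uses its gauge-parity argument only in Appendix~\ref{app:matrix}, to exclude a $t^5$ term in the next coefficient).
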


\begin{proof}
Since $L_N=2NI-A_N$,
\begin{equation}\label{eq:binomial}
 L_N^\ell=\sum_{j=0}^\ell
 (-1)^j\binom{\ell}{j}(2N)^{\ell-j}A_N^j.
\end{equation}
Because $R^\ell$ is constant on every orbit, the matrix entries of
$M_{\ell,N}$ are obtained from \eqref{eq:binomial} by multiplying the
$(\lambda,\mu)$ entry by
$s_\lambda^{\ell/2}s_\mu^{\ell/2}$.  We first isolate the entries that can
contribute at order $N^{\ell-1}$.

The first orbit satisfies
\begin{equation}\label{eq:A2moment}
 \ip{A_N^2\psi_{(1)}}{\psi_{(1)}}
 =2N+4(N-1)+1=6N-3.
\end{equation}
The three terms correspond respectively to paths through the origin, through
the orbit $(1,1)$, and through the orbit $(2)$.  More explicitly, the
intermediate vertex $0$ contributes $2N$, the normalized return through
$(1,1)$ contributes $4(N-1)$, and the two axial neighbours $\pm2e_j$ together
contribute $1$.  At the full first-shell level these three contributions are
$\mathcal J_N$, $\mathcal B_N$, and $P_1$, respectively; thus the identity
used later is
$P_1A_N^2P_1=\mathcal J_N+\mathcal B_N+P_1$.
Moreover, Lemma~\ref{lem:adjacency} gives
\[
 \ip{A_N\psi_{(1)}}{\psi_{(1,1)}}=2\sqrt{N-1}.
\]
Since $s_{(1)}=1$ and $s_{(1,1)}=2$, the $j=1$ term in
\eqref{eq:binomial} acquires the factor $2^{\ell/2}$.  The $j=2$ term in the
first diagonal entry and the leading $j=0$ terms therefore give
\begin{align*}
 (M_{\ell,N})_{11}
 &=(2N)^\ell
   +6\binom{\ell}{2}(2N)^{\ell-2}N
   +O_\ell(N^{\ell-2}),\\
 (M_{\ell,N})_{12}
 &=-\ell(2N)^{\ell-1}2^{\ell/2+1}\sqrt{N}
   +O_\ell(N^{\ell-3/2}),\\
 (M_{\ell,N})_{22}
 &=2^\ell(2N)^\ell+O_\ell(N^{\ell-1}).
\end{align*}
Here $6N-3$ may be replaced by $6N$ at this stage because the constant part
contributes only $O_\ell(N^{\ell-2})$.  Likewise,
$\sqrt{N-1}=\sqrt N+O(N^{-1/2})$, and the resulting correction to the coupling
is $O_\ell(N^{\ell-3/2})$.

We next compare the scales of the remaining two coordinates.  Their diagonal
entries are separated from $(2N)^\ell$ by a quantity of order $N^\ell$.
Lemma~\ref{lem:pathbound} gives couplings of size at most
$O_\ell(N^{\ell-1})$ from the first orbit, so their Schur-complement
contribution is $O_\ell(N^{\ell-2})$.  Thus the coefficient of
$N^{\ell-1}$ is determined by the leading $2\times2$ block on
$(1),(1,1)$.

Let $a_N,d_N$ denote its diagonal entries and $c_N$ its off-diagonal entry.
Since $d_N-a_N=(2^\ell-1)(2N)^\ell+O_\ell(N^{\ell-1})$, the lower eigenvalue
may be expanded as
\[
 \frac{a_N+d_N-\sqrt{(d_N-a_N)^2+4c_N^2}}2
 =a_N-\frac{c_N^2}{d_N-a_N}+O_\ell(N^{\ell-2}).
\]
Indeed, $c_N/(d_N-a_N)=O_\ell(N^{-1/2})$, so the omitted terms in the square
root begin with $c_N^4/(d_N-a_N)^3=O_\ell(N^{\ell-2})$.  Substitution of the
three entries above gives
\begin{align*}
 \mu_{\ell,N}
 &=(2N)^\ell
 +3\ell(\ell-1)2^{\ell-2}N^{\ell-1}\\
 &\quad
 -\frac{\ell^2 2^{3\ell}N^{2\ell-1}}
 {(2N)^\ell(2^\ell-1)}
 +O_\ell(N^{\ell-2}),
\end{align*}
which is \eqref{eq:gamma}.  Determining the coefficient of $N^{\ell-2}$
requires retaining every selected-orbit path of weighted cost at most two.
That finite calculation is carried out in Proposition~\ref{prop:delta} and
Appendix~\ref{app:matrix}; it is deliberately not used in the present
statement.

Finally, the four orbit vectors are finitely supported and belong to the form
core of $K_{\ell,N}$.  The Rayleigh--Ritz principle applied to their span gives
$C_\ell(N)\leq\mu_{\ell,N}$.  Equation~\eqref{eq:ritzleading} proves
\eqref{eq:upper}.
\end{proof}

\section{Derivation of the third coefficient}

The next proposition derives the closed expression \eqref{eq:delta} directly
from the four-orbit matrix.

\begin{proposition}\label{prop:delta}
For every fixed $\ell\geq1$,
\begin{equation}\label{eq:ritzexp}
 \mu_{\ell,N}
 =2^\ell N^\ell+\gamma_\ell N^{\ell-1}
  +\delta_\ell N^{\ell-2}+O_\ell(N^{\ell-3}),
\end{equation}
where $\delta_\ell$ is given by \eqref{eq:delta}.  Its first values are
\[
 \delta_1=-\frac{20}{3},\qquad
 \delta_2=-\frac{7729}{540},\qquad
 \delta_3=-\frac{30020}{4459},\qquad
 \delta_4=\frac{5639018}{57375}.
\]
\end{proposition}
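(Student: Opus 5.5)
We obtain \eqref{eq:ritzexp} by extending the perturbative computation in the proof of Proposition~\ref{prop:ritz} by one more order. This time we keep every matrix entry of the four-orbit compression $M_{\ell,N}$ to the precision it can affect at $N^{\ell-2}$. Order the basis as $\psi_{(1)},\psi_{(1,1)},\psi_{(1,1,1)},\psi_{(2)}$ and rescale by $(2N)^{-\ell}$, so that the diagonal becomes $1,2^\ell,3^\ell,4^\ell$ plus corrections in powers of $N^{-1/2}$. The weighted distances are $d_*((1),(1,1))=1/2$, $d_*((1),(1,1,1))=1$ (two dimensional edges) and $d_*((1),(2))=1$ (one internal edge). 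By \eqref{eq:Kpathbound} the corresponding couplings are therefore $O(N^{\ell-1/2})$, $O(N^{\ell-1})$ and $O(N^{\ell-1})$, while $d_*((1,1),(1,1,1))=1/2$ and $d_*((1,1),(2))=1$ give the remaining off-diagonal scales. The lowest eigenvalue is separated from the other three diagonal entries by gaps of order $N^\ell$, so Rayleigh--Schrödinger (equivalently, an exact Feshbach--Schur reduction onto $\psi_{(1)}$) gives
\[
\mu_{\ell,N}=m_{11}-\sum_{\nu}\frac{|m_{1\nu}|^2}{m_{\nu\nu}-\mu}+\sum_{\nu,\nu'}\frac{m_{1\nu}m_{\nu\nu'}m_{\nu'1}}{(m_{\nu\nu}-\mu)(m_{\nu'\nu'}-\mu)}-\cdots,
\]
where the prime terms are off-diagonal. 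We expand this self-consistently in $\mu$, keeping all terms of size at least $N^{\ell-2}$.

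\textbf{Entries needed.} To reach order $N^{\ell-2}$ we need the following, each computed from \eqref{eq:binomial}:
\begin{itemize}
\item $m_{11}$ through its $N^{\ell-2}$ term, which requires the exact moments $\langle A_N^{2}\psi_{(1)},\psi_{(1)}\rangle=6N-3$ and $\langle A_N^{4}\psi_{(1)},\psi_{(1)}\rangle$ to order $N^2$ (the $j=3$ term vanishes by bipartiteness);
\item $m_{12}$ through relative order $N^{-1}$, which requires $\langle A_N^3\psi_{(1)},\psi_{(1,1)}\rangle$ to leading order and $\sqrt{N-1}$ expanded to $\sqrt N(1-\tfrac{1}{2N})$;
\item $m_{22}$ through relative order $N^{-1}$, via $\langle A_N^2\psi_{(1,1)},\psi_{(1,1)}\rangle$;
\item the leading terms of $m_{13}$ (from $\langle A_N^2\psi_{(1)},\psi_{(1,1,1)}\rangle$, of order $N$, with weight $3^{\ell/2}$), $m_{23}$ (of order $N^{1/2}$) and $m_{14}$ (from the $j=1$ and $j=2$ terms, including paths through $(1,1)$ and the origin, with weight $4^{\ell/2}$).
\end{itemize}
The moments themselves are finite path counts in the orbit graph, obtained with Lemma~\ref{lem:adjacency} and the origin coupling $\sqrt{2N}$.

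\textbf{Assembling the terms.} The second-order shift from $(1,1)$ must be expanded to next order in three places: the correction to $|m_{12}|^2$, the $N^{\ell-1}$ part of $m_{22}$, and the replacement of $\mu$ by $2^\ell N^\ell+\gamma_\ell N^{\ell-1}$ in the denominator. These produce the terms with denominators $(2^\ell-1)^2$ and $(2^\ell-1)^3$. The fourth-order term $c^4/(d-a)^3$ from the $2\times2$ block also contributes here.

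The $(1,1,1)$ channel enters in two ways: through the direct coupling $m_{13}$, and through the cross term $m_{12}m_{23}m_{31}$ and the second-order path $(1)\to(1,1)\to(1,1,1)$. Together these form a perfect square with denominator $3^\ell-1$. This matches the factor $\bigl(\binom{\ell}{2}-\tfrac{2^\ell\ell^2}{2^\ell-1}\bigr)^2$ in \eqref{eq:delta}, which suggests writing the effective coupling to $(1,1,1)$ as the direct term minus the term mediated by $(1,1)$. The orbit $(2)$ gives the last term, with denominator $2^{2\ell}-1$.

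Each discarded contribution (higher Neumann terms, subleading entries) carries at least weighted cost $3$ and so is $O(N^{\ell-3})$. We bound these uniformly using Lemma~\ref{lem:pathbound} together with the $N^\ell$ gap.

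\textbf{Main obstacle.} The main difficulty is the bookkeeping. We must get the exact $N^2$ coefficient of $\langle A_N^4\psi_{(1)},\psi_{(1)}\rangle$, including paths that revisit the origin, and the subleading parts of $m_{12}$ and $m_{22}$, and then verify that the combination collapses to \eqref{eq:delta}. We would organize this by collecting the moments in Appendix~\ref{app:matrix}. As a consistency check we would verify the result for $\ell=1$, where everything is explicit and must give $\delta_1=-20/3$, and compare with $\ell=2$ as a second check.
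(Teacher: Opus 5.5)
Your route is the paper's. You use the same four-orbit compression rescaled by $(2N)^{-\ell}$, the same exact moments from Appendix~\ref{app:matrix}, and perturbation theory about the simple eigenvalue $1$. Your Feshbach--Neumann expansion is equivalent to the paper's recursion for the eigenvector coefficients. Your perfect square over $3^\ell-1$ and the $(2)$-term over $2^{2\ell}-1$ are exactly the last two terms of \eqref{eq:hclosed}.

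\textbf{The gap: the $O(N^{\ell-3})$ remainder is not justified.} Lemma~\ref{lem:pathbound} only produces costs in $\tfrac12\mathbb Z$. So ``every discarded contribution has cost $>2$'' yields only $O(N^{\ell-5/2})$ unless you exclude cost-$5/2$ terms. Such a term would be a $t^5$ term in the rescaled eigenvalue, with $t=N^{-1/2}$. It must be excluded both among the higher Neumann loops and in the truncated tails of individual entries, such as the tail of $m_{12}$ after its relative $N^{-1}$ term. Your own accounting shows the danger. You set $d_*((1,1),(2))=1$, but this distance is $3/2$: the cheapest paths are $(1,1)\to(1)\to(2)$ and $(1,1)\to(2,1)\to(2)$. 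With your value, the Neumann loop $(1)\to(1,1)\to(2)\to(1)$ would cost $5/2$ and destroy the claimed remainder.

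\textbf{The missing idea is the parity argument the paper uses.} Dimensional edges change the number of parts $k$ by $\pm1$ and internal edges do not. Hence every orbit path from $\lambda$ to $\mu$ has $a\equiv k(\lambda)-k(\mu)\pmod 2$. After rescaling, each path contributes $t^{2j-a}F(t^2)$ with $F$ analytic, because $\sqrt{N-k}=t^{-1}\sqrt{1-kt^2}$. This gives $\widehat M(-t)=\mathcal D\widehat M(t)\mathcal D$ with $\mathcal D=\operatorname{diag}\bigl((-1)^{k(\lambda)}\bigr)$. The simple branch issuing from $1$ is therefore even in $t$, so the expansion passes directly from $t^4$ to $t^6$. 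Equivalently, every closed loop has integer cost and every entry's corrections come in steps of $t^2$. This is precisely what your claim ``cost at least $3$'' silently requires.

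Two smaller points:
\begin{itemize}
\item In the self-consistent form you wrote, the term $2^{2\ell}\ell^4/(2^\ell-1)^3$ of \eqref{eq:delta} is exactly what the substitution $\mu\approx(2N)^\ell(1+gt^2)$ in the $(1,1)$ denominator produces. It must not be added again as a separate fourth-order $2\times2$ term; your $\ell=1$ check would expose the double count.
\item $m_{14}$ needs only its $j=1$ term. The $j=2$ term vanishes by bipartiteness, and the paths through the origin or $(1,1)$ have length $3$. They enter $\widehat M_{03}$ only at order $t^4$ and the eigenvalue only at order $t^6$.
\end{itemize}
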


\begin{proof}
Put $t=N^{-1/2}$ and write
\[
 p_2=2^\ell,\qquad p_3=3^\ell,
 \qquad c_j=\binom{\ell}{j}.
\]
We scale the four-orbit matrix by $(2N)^{-\ell}$ and denote the resulting
matrix by $\widehat M(t)$.  The exact moments in
Appendix~\ref{app:matrix}, after expansion in powers of $t$, give
\begin{align*}
 \widehat M_{00}
 &=1+\frac32c_2t^2+
   \left(-\frac34c_2+\frac{15}{4}c_4\right)t^4+O_\ell(t^6),\\
 \widehat M_{01}
 &=-\sqrt {p_2}\,\ell t+
   \sqrt {p_2}\left(\frac\ell2-3c_3\right)t^3+O_\ell(t^5),\\
 \widehat M_{02}
 &=\frac{\sqrt{6p_3}}2c_2t^2+O_\ell(t^4),\\
 \widehat M_{03}
 &=-\frac{p_2\ell}{2}t^2+O_\ell(t^4),\\
 \widehat M_{11}
 &=p_2+\frac52p_2c_2t^2+O_\ell(t^4),\\
 \widehat M_{12}
 &=-\frac\ell2\sqrt{6p_2p_3}\,t+O_\ell(t^3),\\
 \widehat M_{22}&=p_3+O_\ell(t^2),\\
 \widehat M_{33}&=p_2^2+O_\ell(t^2).
\end{align*}

For later use, abbreviate the displayed coefficients by
\begin{align*}
 a_0&=\frac32c_2,
 &a_4&=-\frac34c_2+\frac{15}{4}c_4,\\
 A&=\sqrt {p_2}\,\ell,
 &B&=\sqrt {p_2}\left(\frac\ell2-3c_3\right),\\
 C&=\frac{\sqrt{6p_3}}2c_2,
 &E&=-\frac{p_2\ell}{2},\\
 \kappa&=\frac52p_2c_2,
 &G&=\frac\ell2\sqrt{6p_2p_3}.
\end{align*}
At $t=0$ the matrix is diagonal with entries
$1,p_2,p_3,p_2^2$.  Since $p_2,p_3,p_2^2>1$, the eigenvalue $1$ is simple and
is separated from the other three eigenvalues.  Analytic perturbation theory
therefore produces a unique eigenvalue branch issuing from $1$ and an
eigenvector whose first component may be normalized to one.  The orbit-path
costs in Appendix~\ref{app:matrix} determine the first possible powers of
$t$ in the other components.  We may consequently write
\[
 \lambda=1+gt^2+ht^4+O_\ell(t^6),
 \qquad
 v=(1,b_1t+b_3t^3,e_2t^2,d_2t^2)^T+O_\ell(t^4)
\]
in $\widehat M(t)v(t)=\lambda(t)v(t)$.

We now compare the four coordinates.  In the second coordinate, the
coefficients of $t$ and $t^3$ give
\begin{align}
 b_1&=\frac{A}{p_2-1},\label{eq:b1}\\
 b_3&=\frac{-B-(\kappa-g)b_1+Ge_2}{p_2-1}.
 \label{eq:b3}
\end{align}
The coefficients of $t^2$ in the third and fourth coordinates give
\begin{align}
 e_2&=\frac{Gb_1-C}{p_3-1},\label{eq:c2star}\\
 d_2&=-\frac{E}{p_2^2-1}.
 \label{eq:d2}
\end{align}
Finally, the coefficients of $t^2$ and $t^4$ in the first coordinate are
\begin{align}
 g&=a_0-Ab_1,\label{eq:g}\\
 h&=a_4-Ab_3+Bb_1+Ce_2+Ed_2.
 \label{eq:h}
\end{align}
Equations \eqref{eq:b1} and \eqref{eq:g} show directly that
\[
 2^\ell g
 =3\ell(\ell-1)2^{\ell-2}
 -\frac{\ell^22^{2\ell}}{2^\ell-1}
 =\gamma_\ell.
\]

It remains to eliminate the auxiliary coefficients from $h$.  Substituting
\eqref{eq:b1}--\eqref{eq:g} into \eqref{eq:h}, and grouping the two terms
containing $e_2$, yields
\begin{equation}\label{eq:hclosed}
 h=a_4+\frac{2AB}{p_2-1}
 +\frac{A^2(\kappa-a_0)}{(p_2-1)^2}
 +\frac{A^4}{(p_2-1)^3}
 -\frac{\bigl(C-AG/(p_2-1)\bigr)^2}{p_3-1}
 -\frac{E^2}{p_2^2-1}.
\end{equation}
Indeed, the coefficient of $e_2$ in \eqref{eq:h} becomes
$C-AG/(p_2-1)$, whereas
$e_2=-(C-AG/(p_2-1))/(p_3-1)$; this gives the square in
\eqref{eq:hclosed}.  The remaining substitutions are
\begin{align*}
 2AB&=p_2\ell(\ell-6c_3),\\
 A^2(\kappa-a_0)&=\frac12p_2\ell^2c_2(5p_2-3),\\
 C-\frac{AG}{p_2-1}
 &=\frac{\sqrt{6p_3}}2
 \left(c_2-\frac{p_2\ell^2}{p_2-1}\right),\\
 E^2&=\frac14p_2^2\ell^2.
\end{align*}
Inserting these identities into \eqref{eq:hclosed} and multiplying by
$p_2=2^\ell$ gives exactly \eqref{eq:delta}.  Finally,
$(2N)^\ell t^4=2^\ell N^{\ell-2}$ and
$(2N)^\ell t^6=O_\ell(N^{\ell-3})$, so the eigenvalue expansion has the
claimed coefficient and remainder.  Substitution of $\ell=1,2,3,4$ in
\eqref{eq:delta} gives the four displayed values.
\end{proof}

\section{Selection of the invariant first-shell sector}

For $\ell=1$, positivity of the Dirichlet form allows a nonlinear symmetrization.
For $\ell\geq2$, this is no longer available: $L_N^\ell$ is not a Markov generator.
One must instead compare the irreducible components of the first-shell cluster.

Two finite-dimensional reductions have distinct roles.  The four-orbit
compression of Sections~3--4 computes the coefficients inside the invariant
sector.  The full first-shell Feshbach reduction below compares all
representations meeting $S_1$ and excludes competing branches.  Neither
reduction substitutes for the other.

Let $\cE_N$ be the $2N$-dimensional space supported on
$S_1=\{\pm e_j:1\leq j\leq N\}$.  The action of the signed permutation group
$\BN$ gives the orthogonal decomposition
\begin{equation}\label{eq:decomposition}
 \cE_N=\cE_N^{\mathrm{triv}}
 \oplus\cE_N^{\mathrm{ev},0}
 \oplus\cE_N^{\mathrm{odd}},
\end{equation}
where, writing $f_j^\pm=f(\pm e_j)$,
\begin{align*}
 \cE_N^{\mathrm{triv}}
 &=\{f:f_j^+=f_j^-=a\text{ for all }j\},\\
 \cE_N^{\mathrm{ev},0}
 &=\{f:f_j^+=f_j^-=a_j\text{ and }\textstyle\sum_{j=1}^N a_j=0\},\\
 \cE_N^{\mathrm{odd}}
 &=\{f:f_j^+=-f_j^-\text{ for all }j\}.
\end{align*}
The three summands have dimensions $1$, $N-1$, and $N$, respectively.  Their
irreducibility can be seen directly.  On the even subspace the coordinate sign
changes act trivially, and the action reduces to the permutation representation
of $S_N$ on $\mathbb C^N$.  Its constant line and zero-sum subspace are
irreducible: if a non-zero invariant subspace of the zero-sum component
contains $a$ with $a_i\ne a_j$, then
$a-(ij)a=(a_i-a_j)(e_i-e_j)$ belongs to it, and the permutations of
$e_i-e_j$ span the entire zero-sum space.  On the odd subspace a sign change
in the $j$th coordinate acts as the diagonal reflection
$a_j\mapsto-a_j$.  Hence $(I-s_j)a/2=a_je_j$ belongs to every invariant
subspace containing $a$; a non-zero coordinate followed by permutations
generates all of $\mathbb C^N$.  The odd component is therefore irreducible as
well.  Thus \eqref{eq:decomposition} is multiplicity-free, and every
$\BN$-equivariant self-adjoint operator on $\cE_N$ is scalar on each of the
three summands.

Let $\mathcal J_N$ be the matrix of two-step paths through the origin and let
$\mathcal B_N$ be the matrix of two-step paths through the orbit $(1,1)$.
On $S_1$,
\[
 \mathcal J_N(x,y)=1,
\]
while
\[
 \mathcal B_N(x,x)=2(N-1),\quad
 \mathcal B_N(x,-x)=0,\quad
 \mathcal B_N(\sigma e_i,\tau e_j)=1\quad(i\ne j).
\]
Their eigenvalues on the three components in \eqref{eq:decomposition} are
\begin{center}
\begin{tabular}{c|ccc}
 & $\cE_N^{\mathrm{triv}}$ & $\cE_N^{\mathrm{ev},0}$ & $\cE_N^{\mathrm{odd}}$\\ \hline
$\mathcal J_N$ & $2N$ & $0$ & $0$\\
$\mathcal B_N$ & $4(N-1)$ & $2N-4$ & $2N-2$
\end{tabular}
\end{center}

The matrix $\mathcal J_N$ is the all-ones matrix on $S_1$, so it has
eigenvalue $2N$ on the constant vector and vanishes on its orthogonal
complement.  The row sum of $\mathcal B_N$ is $4(N-1)$, which gives its
trivial eigenvalue.  If $f(\pm e_j)=a_j$ and $\sum_j a_j=0$, then
\[
 (\mathcal B_Nf)(e_i)
 =2(N-1)a_i+2\sum_{j\ne i}a_j=(2N-4)a_i.
\]
If instead $f(e_j)=-f(-e_j)$, the contributions from every pair
$\{e_j,-e_j\}$ with $j\ne i$ cancel, leaving
$(\mathcal B_Nf)(e_i)=2(N-1)f(e_i)$.  These computations prove all entries in
the displayed array.

Let $P_1$ be the projection onto $S_1$, let $P_{11}$ be the projection onto the
orbit $(1,1)$, and put
\[
 Q_1=I-P_1,\qquad Z_3=I-P_1-P_{11}.
\]
Notice that $Z_3$ also contains the axial orbit $(2)$: its Fourier index has
squared length $4$, not $2$.  Set $\Lambda_N=(2N)^\ell$.

We use the following projection form of the Feshbach--Schur map.  It is the
Schur complement of the $Q$-block and is isospectral at zero in the sense made
precise below; see \cite{BachChenFrohlichSigal} for a general formulation.

\begin{lemma}\label{lem:schur}
Let $H$ be a semibounded self-adjoint operator with closed form $\mathfrak h$,
and let $P$ be a finite-rank orthogonal projection such that
$P\cH\subset\operatorname{Dom}(H)$.  Put $Q=I-P$, assume that $P$ preserves
the form domain, and let $H_Q$ be the self-adjoint operator represented by the
restriction of $\mathfrak h$ to $Q\mathcal Q(H)$.  Let $z\in\mathbb R$.  If
$H_Q-z\geq\eta Q$ for some $\eta>0$, then
\[
 \mathcal F_P(H-z)
 =P(H-z)P-PHQ(H_Q-z)^{-1}QHP
\]
is self-adjoint on the finite-dimensional space $P\cH$; here $PHQ$ denotes
the bounded adjoint of $QHP$.  Moreover,
\[
 z\in\sigma(H)\quad\Longleftrightarrow\quad
 0\in\sigma\bigl(\mathcal F_P(H-z)\bigr).
\]
The dimensions of the corresponding kernels agree.  If $b_Q$ is a lower form
bound for $H_Q$, then the spectral projection of $H$ on $(-\infty,b_Q)$ has
dimension at most $\operatorname{rank}P$.  Its range is a finite sum of
eigenspaces, and every eigenvalue in this interval is detected by the Schur
complement.
\end{lemma}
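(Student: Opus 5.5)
The plan is the standard block-operator (Schur complement) argument, carried out at the level of quadratic forms so that the unbounded parts of $H$ are never applied outside their domains.

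\textbf{Setup and self-adjointness.} Since $P\cH\subset\operatorname{Dom}(H)$ and $P\cH$ is finite-dimensional, the operator $QHP$ is everywhere defined on $P\cH$ and bounded. Its adjoint $PHQ:=(QHP)^*$ is therefore a bounded operator $Q\cH\to P\cH$. For $u\in Q\mathcal Q(H)$ and $p\in P\cH$ we have
\[
 \mathfrak h[p,u]=\ip{Hp}{u}=\ip{QHp}{u}.
\]
The hypothesis $H_Q-z\geq\eta Q$ makes $(H_Q-z)^{-1}$ bounded on $Q\cH$ with norm at most $\eta^{-1}$. Hence $\mathcal F_P(H-z)$ is a bounded operator on the finite-dimensional space $P\cH$. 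It is symmetric because $(H_Q-z)^{-1}$ is self-adjoint and $P(H-z)P$ is Hermitian, so it is self-adjoint.

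\textbf{Isospectrality at $z$.} First take $\psi\in\ker(H-z)$ and write $p=P\psi$, $q=Q\psi$. Both lie in the form domain because $P$ preserves it. Testing the form equation $\mathfrak h[\phi,\psi]=z\ip{\phi}{\psi}$ against $\phi\in Q\mathcal Q(H)$ gives the weak equation
\[
 (\mathfrak h_Q-z)[\phi,q]=-\ip{\phi}{QHp}.
\]
By the representation theorem this means $q\in\operatorname{Dom}(H_Q)$ and $q=-(H_Q-z)^{-1}QHp$. Testing against $\phi\in P\cH$ then gives
\[
 P(H-z)p+PHQ\,q=0,
\]
that is, $\mathcal F_P(H-z)p=0$. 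Conversely, given $p\in\ker\mathcal F_P(H-z)$, define $q=-(H_Q-z)^{-1}QHp$ and reverse these steps to conclude $p+q\in\ker(H-z)$. The maps $\psi\mapsto P\psi$ and $p\mapsto p+q$ are mutually inverse linear bijections between the two kernels. Injectivity of $\psi\mapsto P\psi$ follows because $p=0$ forces $q=0$. So the kernel dimensions agree.

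To upgrade "$z$ is an eigenvalue" to "$z\in\sigma(H)$", I need $z$ to lie outside the essential spectrum. Since $P$ has finite rank, $H$ and $PHP\oplus H_Q$ differ by a finite-rank form perturbation. Weyl's theorem then gives
\[
 \sigma_{\mathrm{ess}}(H)=\sigma_{\mathrm{ess}}(H_Q)\subset[z+\eta,\infty).
\]
Thus a point $z$ with $H_Q-z\geq\eta Q$ lies in $\sigma(H)$ only as an isolated eigenvalue of finite multiplicity, which proves the stated equivalence.

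\textbf{Dimension bound below $b_Q$.} Let $E$ be the spectral projection of $H$ on $(-\infty,b_Q)$. If $\operatorname{rank}E>\operatorname{rank}P$, then some nonzero $\psi\in\operatorname{Ran}E\cap\ker P$ exists; this requires a form-domain argument, approximating through spectral truncations, which lie in $\operatorname{Dom}(H)$. Such a $\psi$ satisfies $\mathfrak h[\psi]<b_Q\norm\psi^2$, because it lies in the range of $E$. On the other hand $\mathfrak h[\psi]=\mathfrak h_Q[\psi]\geq b_Q\norm\psi^2$, since $\psi\in Q\mathcal Q(H)$. This contradiction shows $\operatorname{rank}E\le\operatorname{rank}P$.

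Because $E$ has finite rank, its range is a finite sum of eigenspaces. Every eigenvalue $z<b_Q$ then satisfies $H_Q-z\geq(b_Q-z)Q$, so the previous step applies with $\eta=b_Q-z$, and $z$ is detected by the Schur complement.

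I expect the main obstacle to be the domain bookkeeping in the isospectrality step. One must show that the weak $Q$-equation places $q$ in $\operatorname{Dom}(H_Q)$, and that $p+q$ lies in $\operatorname{Dom}(H)$ rather than only in the form domain. I would handle both points entirely in the form language: $\psi$ is an eigenvector exactly when $\mathfrak h[\phi,\psi]=z\ip{\phi}{\psi}$ for all $\phi$ in the form domain. This equivalence is part of the first representation theorem, which avoids ever applying $H$ to $q$ directly.
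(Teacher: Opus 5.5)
Your proof is correct. It agrees with the paper on the self-adjointness, the kernel correspondence and the rank bound below $b_Q$. For the kernel step the paper reads $q=-D^{-1}Bp$ off the block form, while you derive it from the weak $Q$-equation via the first representation theorem; yours is the more carefully justified version of the same step.

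The genuine difference is in how you obtain $z\in\sigma(H)\Leftrightarrow 0\in\sigma(\mathcal F_P(H-z))$.

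\textbf{The paper's route.} It first identifies $H$ with the block operator on $P\cH\oplus\operatorname{Dom}(H_Q)$. It then factors $H-z=U\bigl(\mathcal F_P(H-z)\oplus D\bigr)L$, where $U$ and $L$ are bounded, boundedly invertible triangular factors. Invertibility of $H-z$ is therefore equivalent to invertibility of the finite matrix in one stroke, with no mention of eigenvalues or essential spectrum.

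\textbf{Your route.} You never identify $\operatorname{Dom}(H)$ as a direct sum and prove only the kernel correspondence. You then need a separate argument that $z$ cannot be a non-eigenvalue point of $\sigma(H)$. Weyl's theorem supplies it: the off-diagonal part of the form is bounded and of finite rank, so it is represented by a bounded finite-rank operator, and the resolvent difference is finite rank. This is valid, but heavier than necessary. Your own rank-bound step, applied with $b_Q=z+\eta$, already shows that $E_H((-\infty,z+\eta))$ has finite rank. Hence $\sigma(H)\cap(-\infty,z+\eta)$ consists of finitely many eigenvalues of finite multiplicity, which is exactly what you need.

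\textbf{What each buys.} The factorization gives a direct invertibility and resolvent statement. Your form-level argument gives cleaner domain bookkeeping.

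A minor point: the rank bound needs no approximation through spectral truncations. Semiboundedness of $H$ already gives $\operatorname{Ran}E_H((-\infty,b_Q))\subset\operatorname{Dom}(H)\subset\mathcal Q(H)$.
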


\begin{proof}
Finite rank and $P\cH\subset\operatorname{Dom}(H)$ imply that
$A=PHP$ and $B=QHP$ are bounded on their finite-dimensional domain; the
adjoint of $B$ is the bounded map $PHQ$.  For
$p\in P\cH$ and $q\in Q\mathcal Q(H)$ the closed form decomposes as
\[
 \mathfrak h[p+q]=\ip{Ap}{p}+2\operatorname{Re}\ip{Bp}{q}
 +\mathfrak h_Q[q].
\]
The first representation theorem therefore identifies $H$ with the block
operator
\[
 \begin{pmatrix}A&B^*\\ B&H_Q\end{pmatrix},
 \qquad
 \operatorname{Dom}(H)=P\cH\oplus\operatorname{Dom}(H_Q).
\]
The assumption on $H_Q-z$ makes $D=H_Q-z$ invertible and gives
$\norm{D^{-1}}\leq\eta^{-1}$.  Since
$D^{-1}:Q\cH\to\operatorname{Dom}(H_Q)$ and
$B:P\cH\to Q\cH$ is bounded, one has
$D^{-1}B(P\cH)\subset\operatorname{Dom}(H_Q)$.  Thus the right triangular
factor below preserves $P\cH\oplus\operatorname{Dom}(H_Q)$.  As an identity from
$P\cH\oplus\operatorname{Dom}(H_Q)$ to $P\cH\oplus Q\cH$, block
multiplication yields
\[
 H-z=
 \begin{pmatrix}I&B^*D^{-1}\\0&I\end{pmatrix}
 \begin{pmatrix}\mathcal F_P(H-z)&0\\0&D\end{pmatrix}
 \begin{pmatrix}I&0\\D^{-1}B&I\end{pmatrix}.
\]
The triangular factors are bounded and have bounded inverses obtained by
changing the signs of their off-diagonal entries.  Consequently, $H-z$ is
invertible if and only if both diagonal factors are invertible.  Since $D$ is
already invertible, this is equivalent to invertibility of
$\mathcal F_P(H-z)$, proving the spectral equivalence.

The kernel correspondence can also be read explicitly.  If
$(H-z)(p+q)=0$ with $p=P(p+q)$ and $q=Q(p+q)$, then the $Q$-equation gives
$q=-D^{-1}Bp$; substitution into the $P$-equation gives
$\mathcal F_P(H-z)p=0$.  Conversely, any vector
$p\in\ker\mathcal F_P(H-z)$ produces
$p-D^{-1}Bp\in\ker(H-z)$.  These mutually inverse maps preserve the kernel
dimension.

For the final assertion, suppose that the spectral projection
$E_H((-\infty,b_Q))$ had dimension larger than $\operatorname{rank}P$.  Its
range would contain a non-zero vector $f$ with $Pf=0$.  The spectral theorem
would give $\mathfrak h[f]<b_Q\norm f^2$, whereas
$f\in Q\mathcal Q(H)$ and the definition of $b_Q$ gives
$\mathfrak h[f]\geq b_Q\norm f^2$.  This contradiction proves the rank bound.
A finite-dimensional spectral subspace is a finite sum of eigenspaces, and the
kernel correspondence detects each corresponding eigenvalue.
\end{proof}

The following estimates put the shell couplings needed in both Schur
reductions into operator-norm form.  Let
\[
 C_N=P_1A_NP_{11}:P_{11}\cH_{0,N}\longrightarrow P_1\cH_{0,N}.
\]
Thus $C_N$ is the incidence operator between $S_1$ and the orbit $(1,1)$.

\begin{lemma}\label{lem:shellblocks}
For every fixed $\ell$, as $N\to\infty$,
\begin{align}
 P_1K_{\ell,N}P_1
 &=\Lambda_NP_1+\binom\ell2(2N)^{\ell-2}
 (\mathcal J_N+\mathcal B_N)+O_\ell(N^{\ell-2}),
 \label{eq:blocksPKP}\\
 P_1K_{\ell,N}Q_1
 &=-\ell(2N)^{\ell-1}2^{\ell/2}C_N+T_{\ell,N},
 \qquad \norm{T_{\ell,N}}=O_\ell(N^{\ell-1}),
 \label{eq:blocksPKQ}\\
 P_{11}K_{\ell,N}P_{11}
 &=2^\ell\Lambda_NP_{11}+O_\ell(N^{\ell-1}),
 \label{eq:blocks1111}\\
 \norm{P_{11}K_{\ell,N}Z_3}
 &=O_\ell(N^{\ell-1/2}).
 \label{eq:blocks11Z}
\end{align}
All remainders are operator-norm bounds.  Moreover,
$C_NC_N^*=\mathcal B_N$ and $\norm{C_N}=2\sqrt{N-1}$.
\end{lemma}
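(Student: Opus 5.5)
The plan is to expand $K_{\ell,N}=R^\ell L_N^\ell R^\ell$ via the binomial formula \eqref{eq:binomial}, $L_N^\ell=\sum_j(-1)^j\binom\ell j(2N)^{\ell-j}A_N^j$, and estimate each block term by term. The estimates will be made at the level of full shells, not orbit vectors. Every block involved connects finite unions of shells, and on those shells $R^\ell$ is a constant: $1$ on $S_1$, $2^{\ell/2}$ on $(1,1)$, and bounded on every shell reachable in $\ell$ steps. So it suffices to bound the operator norms of the shell blocks $P_XA_N^jP_Y$.

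For this I will use a Schur test. If every point of $X$ reaches $Y$ by at most $\alpha$ paths of length $j$, and every point of $Y$ reaches $X$ by at most $\beta$ such paths, then $\|P_XA_N^jP_Y\|\le\sqrt{\alpha\beta}$. The path counts are computed from the degrees in Lemma~\ref{lem:adjacency}. A dimensional step contributes a factor $O(N)$ in one direction and $O(1)$ in the other. The net effect reproduces the $N^{a/2}$ scaling of Lemma~\ref{lem:pathbound}, but now uniformly over the whole (non-invariant) shell.

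With this tool the four blocks are handled as follows.

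For \eqref{eq:blocksPKP}, the terms are:
\begin{itemize}
\item $j=0$ gives $\Lambda_NP_1$.
\item $j=1$ vanishes, since $P_1A_NP_1=0$.
\item $j=2$ gives exactly $P_1A_N^2P_1=\mathcal J_N+\mathcal B_N+P_1$, as in the discussion after \eqref{eq:A2moment}. Its $P_1$ part contributes $O(N^{\ell-2})$.
\item $j=3$ vanishes by parity, because $A_N$ changes the parity of $\sum n_i$.
\item $j\ge4$ has coefficient $N^{\ell-j}$. Closed walks from $S_1$ of length $j$ have row sums $O(N^{j/2})$: each excursion away from a bounded set of coordinates costs one factor $N$ per two steps. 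The block is symmetric, so the Schur test gives norm $O(N^{j/2})$, and the term is $O(N^{\ell-j/2})\le O(N^{\ell-2})$.
\end{itemize}

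For \eqref{eq:blocksPKQ}, the $j=1$ term is $-\ell(2N)^{\ell-1}P_1A_NQ_1$ multiplied by the factor $R^\ell$ on the endpoint. The only neighbours of $S_1$ in the punctured lattice are $(1,1)$ and $(2)$. The $(1,1)$ part gives $2^{\ell/2}C_N$. The $(2)$ part is a partial isometry of norm $1$, so it contributes $O(N^{\ell-1})$. For $j\ge2$, the coefficient is $N^{\ell-j}$ and the block norm is $O(N^{j/2})$ by the same counting, so the term is $O(N^{\ell-j/2})\le O(N^{\ell-1})$. All of these terms go into $T_{\ell,N}$.

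For \eqref{eq:blocks1111}, the $j=0$ term is $2^\ell\Lambda_N P_{11}$. The $j=1$ term vanishes, because $(1,1)$ has no neighbours inside itself. Terms with $j\ge2$ are $O(N^{\ell-j/2})\le O(N^{\ell-1})$.

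For \eqref{eq:blocks11Z}, $j=0$ contributes nothing. The $j=1$ term couples $(1,1)$ to $(1,1,1)$ and $(2,1)$. Its norm is governed by the dimensional edge, which is $O(\sqrt N)$, giving $O(N^{\ell-1}\sqrt N)=O(N^{\ell-1/2})$. Higher $j$ give at most $O(N^{\ell-j/2})$.

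The last two claims are direct. $C_NC_N^*$ counts two-step paths $S_1\to(1,1)\to S_1$, which is $\mathcal B_N$ by definition. Then $\|C_N\|^2=\|\mathcal B_N\|=4(N-1)$, the largest entry in the eigenvalue table, which gives $\|C_N\|=2\sqrt{N-1}$.

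I expect the main obstacle to be making the Schur-test path counts uniform for general $j\le\ell$ on non-invariant shells. Paths may pass through the origin, where the degree is $2N$, and may revisit shells. I would first establish the weighted bound $\|P_XA_N^jP_Y\|\le C_jN^{a_j(X,Y)/2}$ by induction on $j$, factoring $A_N^j$ through intermediate shell projections. That reduces everything to one-step shell blocks, each of norm $O(\sqrt N)$ if dimensional (including the step into the origin) and $O(1)$ if internal. Only finitely many shells are involved, so the number of factorizations is bounded in terms of $\ell$.
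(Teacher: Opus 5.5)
Your proposal is correct and follows essentially the same route as the paper. Both arguments use the binomial expansion of $R^\ell L_N^\ell R^\ell$, the exact identity $P_1A_N^2P_1=\mathcal J_N+\mathcal B_N+P_1$, bipartiteness to remove odd returns, Schur-test one-step block norms of size $O(N^{1/2})$ or $O(1)$ combined by inserting intermediate projections (the paper's Lemma~\ref{lem:blockpath}), and $\norm{C_N}^2=\norm{\mathcal B_N}=4(N-1)$. The only cosmetic difference is that you factor through coordinate shells rather than hyperoctahedral orbits; this works equally well, since a step is dimensional exactly when it changes $|n|^2$ by $\pm1$.
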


\begin{proof}
The full-shell path estimate in Lemma~\ref{lem:blockpath} says that a
dimensional orbit edge has operator norm $O(N^{1/2})$, an internal edge has
norm $O(1)$, and the resulting bounds are uniform over all irreducible
components.  Applied to the finitely many orbit types reachable from $S_1$ or
$(1,1)$ in at most $\ell$ steps, it also absorbs the bounded endpoint factors
$R^\ell$.

We apply this count to
\begin{equation}\label{eq:binomial-block}
 K_{\ell,N}=\sum_{j=0}^\ell(-1)^j\binom\ell j
 (2N)^{\ell-j}R^\ell A_N^jR^\ell.
\end{equation}
On $S_1$ the $j=0$ term is $\Lambda_NP_1$.  Lattice bipartiteness removes
the odd return blocks, and
\[
 P_1A_N^2P_1=\mathcal J_N+\mathcal B_N+P_1.
\]
The last summand is the return through the axial orbit $(2)$; after
multiplication by $(2N)^{\ell-2}$ it is $O_\ell(N^{\ell-2})$.  A return path
of even length $j\geq4$ has at most $j$ dimensional edges and is therefore
bounded by
$O_\ell(N^{\ell-j+j/2})=O_\ell(N^{\ell-2})$.  This proves
\eqref{eq:blocksPKP}.

For the $P_1$--$Q_1$ block, the one-step path to $(1,1)$ gives exactly
\[
 -\ell(2N)^{\ell-1}2^{\ell/2}P_1A_NP_{11}.
\]
The other one-step endpoint is the axial orbit $(2)$; its incidence norm is
$1$, so its contribution is $O_\ell(N^{\ell-1})$.  Every term with
$j\geq2$ is at most
$O_\ell(N^{\ell-j+j/2})=O_\ell(N^{\ell-1})$, after summing the finitely many
reachable endpoint orbits.  This proves \eqref{eq:blocksPKQ}.

On the orbit $(1,1)$ the two endpoint factors contribute $2^\ell$, so the
$j=0$ term is $2^\ell\Lambda_NP_{11}$.  The first return term has length two,
norm $O(N)$, and coefficient $O(N^{\ell-2})$; all longer returns are smaller.
This proves \eqref{eq:blocks1111}.  Finally, a one-step path from $(1,1)$ into
$Z_3$ ends either in $(1,1,1)$, with norm $O(N^{1/2})$, or in $(2,1)$, with
norm $O(1)$.  Its coefficient is $O(N^{\ell-1})$.  Terms of length at least
two are $O_\ell(N^{\ell-1})$, and hence
\eqref{eq:blocks11Z} follows.

For $x,y\in S_1$, the matrix entry of $C_NC_N^*$ counts their common
$(1,1)$-neighbours, which is exactly the definition of $\mathcal B_N$.
The largest eigenvalue of $\mathcal B_N$ is $4(N-1)$, as computed above, so
$\norm{C_N}=2\sqrt{N-1}$.
\end{proof}

The next estimate supplies the two inverse blocks used in the effective
Hamiltonian.

\begin{lemma}\label{lem:resolvent}
Fix $C>0$ and assume $|z-\Lambda_N|\leq C N^{\ell-1}$.  For all sufficiently
large $N$, the operators represented by the compressed forms satisfy
\begin{align}
 Q_1(K_{\ell,N}-z)Q_1
 &\geq c_\ell N^\ell Q_1,\label{eq:Qgap}\\
 Z_3(K_{\ell,N}-z)Z_3
 &\geq c'_\ell N^\ell Z_3.\label{eq:Zgap}
\end{align}
Moreover, on $P_{11}\cH_{0,N}$,
\begin{equation}\label{eq:resolvent11}
 P_{11}\bigl(Q_1(K_{\ell,N}-z)Q_1\bigr)^{-1}P_{11}
 =\frac{P_{11}}{(2^\ell-1)\Lambda_N}
 +O_\ell(N^{-\ell-1})
\end{equation}
in operator norm.
\end{lemma}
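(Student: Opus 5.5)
The gap estimates \eqref{Qgap}--\eqref{Zgap} will come from a lower bound on the forms restricted to functions vanishing on the first one or two shells. Lemma~\ref{lem:resolvent} then follows from a Schur complement on the $(1,1)$ block.

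\emph{Lower bound on $Q_1$.} For $v\in Q_1\mathcal Q_{\ell,N}$, $R^\ell v$ is supported where $|n|^2\ge2$, and $|n|^{2\ell}\ge 2^\ell$ there. The task is to show
\[
 \mathfrak k_{\ell,N}[v]\ge (1+c)\Lambda_N\norm v^2
\]
for some $c>0$, uniformly for large $N$. The diagonal part of $L_N^\ell$ is $(2N)^\ell(1+O(1/N))$. The natural route is the weighted fixed-gap estimate from Section~6, which gives $\mathfrak k_{\ell,N}[v]\ge s_{\min}^\ell\Lambda_N(1-O(N^{-1/2}))\norm v^2$ when $v$ is supported on shells with $s_\lambda\ge s_{\min}$. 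I will take this as the input, since its statement is announced in the introduction. With $s_{\min}=2$ for $Q_1$ and $s_{\min}=3$ for $Z_3$ (every orbit in $Z_3$ has $s_\lambda\ge3$, including $(2)$ with $s=4$), this gives $\ge 2^\ell\Lambda_N(1-o(1))$ and $\ge 3^\ell\Lambda_N(1-o(1))$. Subtracting $z=\Lambda_N+O(N^{\ell-1})$ leaves gaps $c_\ell N^\ell$ and $c'_\ell N^\ell$, because $2^\ell>1$ and $3^\ell>1$.

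Proving that fixed-gap estimate directly is the main obstacle. Multiplication by $|n|^\ell$ does not commute with $L_N$, so I would expand $\ip{R^\ell v}{L_N^\ell R^\ell v}$ and control the commutator terms, which are lower order in $N$ after the path counting of Lemma~\ref{lem:pathbound}. The difficulty is making this uniform over infinitely many shells rather than a fixed finite set. For that reason the Fourier/torus argument of Section~6 is the intended tool, and I would cite it rather than reprove it here.

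\emph{Resolvent on $P_{11}$.} Decompose $Q_1=P_{11}+Z_3$ and apply block inversion (Lemma~\ref{lem:schur} with $P=P_{11}$, which has infinite rank, but the same algebra works because $Z_3(K-z)Z_3$ is invertible by \eqref{Zgap}). This gives
\[
 P_{11}(Q_1(K-z)Q_1)^{-1}P_{11}=S^{-1},
\]
where
\[
 S=P_{11}(K-z)P_{11}-P_{11}KZ_3\,(Z_3(K-z)Z_3)^{-1}Z_3KP_{11}.
\]
By \eqref{blocks1111} and $z=\Lambda_N+O(N^{\ell-1})$, the first term is $(2^\ell-1)\Lambda_NP_{11}+O(N^{\ell-1})$. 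By \eqref{blocks11Z} and \eqref{Zgap}, the correction term has norm $O(N^{2\ell-1}/N^\ell)=O(N^{\ell-1})$. Hence
\[
 S=(2^\ell-1)\Lambda_N\bigl(P_{11}+O(N^{-1})\bigr).
\]
A Neumann series then gives $S^{-1}=P_{11}/((2^\ell-1)\Lambda_N)+O(N^{-\ell-1})$, which is \eqref{resolvent11}. The domain issues, namely that $P_{11}$ is infinite rank but consists of finite-shell vectors in the operator domain, are handled by the closed-form remarks of Section~2.
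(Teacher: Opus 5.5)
Your proposal is correct and follows the paper's proof. The two gaps come from Proposition~\ref{prop:fouriergap} with $q=2$ and $q=3$, and \eqref{eq:resolvent11} comes from the Schur complement over $Q_1=P_{11}+Z_3$, using \eqref{eq:blocks1111}, \eqref{eq:blocks11Z}, \eqref{eq:Zgap}, a Neumann series, and the block-inverse identification. One small correction: the orbit $(1,1)$ has $2N(N-1)$ points, so $P_{11}$ has finite rank for each $N$, and Lemma~\ref{lem:schur} applies directly with no infinite-rank extension needed.
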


\begin{proof}
Proposition~\ref{prop:fouriergap}, applied with $q=2$ and $q=3$, gives
\[
 Q_1K_{\ell,N}Q_1\geq2^\ell\Lambda_N
 (1-O_\ell(N^{-1/2}))Q_1,
 \qquad
 Z_3K_{\ell,N}Z_3\geq3^\ell\Lambda_N
 (1-O_\ell(N^{-1/2}))Z_3.
\]
Subtracting $z=\Lambda_N+O(N^{\ell-1})$ from the first inequality leaves
\[
 \bigl((2^\ell-1)\Lambda_N-O_\ell(N^{\ell-1/2})
 -O(N^{\ell-1})\bigr)Q_1,
\]
which is bounded below by $c_\ell N^\ell Q_1$ for large $N$.  The same
calculation with $3^\ell-1$ in place of $2^\ell-1$ gives
\eqref{eq:Zgap}.  This proves \eqref{eq:Qgap}--\eqref{eq:Zgap} uniformly in
the stated energy window.

We next decompose $Q_1=P_{11}+Z_3$.  Equations
\eqref{eq:blocks1111}--\eqref{eq:blocks11Z} give the two block estimates
needed for this decomposition.  Feshbach reduction with respect to
$P_{11}+Z_3$, justified by
Lemma~\ref{lem:schur} and \eqref{eq:Zgap}, therefore gives
\begin{align*}
 &P_{11}(K_{\ell,N}-z)P_{11}
 -P_{11}K_{\ell,N}Z_3
 \bigl(Z_3(K_{\ell,N}-z)Z_3\bigr)^{-1}
 Z_3K_{\ell,N}P_{11}\\
 &\hspace{35mm}
 =\bigl((2^\ell-1)\Lambda_N\bigr)P_{11}+O_\ell(N^{\ell-1}).
\end{align*}
To see the size of the error explicitly, the $P_{11}$ diagonal contributes
$O_\ell(N^{\ell-1})$ after replacing $z$ by $\Lambda_N$, while the Schur term
is bounded by
\[
 O_\ell(N^{\ell-1/2})^2\,O_\ell(N^{-\ell})
 =O_\ell(N^{\ell-1}).
\]
Denote the left side by $D_{11,N}(z)$.  The error estimate is uniform in the
stated $z$-window, and
\[
 \norm{\bigl((2^\ell-1)\Lambda_N\bigr)^{-1}
       (D_{11,N}(z)-(2^\ell-1)\Lambda_NP_{11})}
 =O_\ell(N^{-1}).
\]
The Neumann series therefore converges uniformly and yields
\[
 D_{11,N}(z)^{-1}
 =\frac{P_{11}}{(2^\ell-1)\Lambda_N}
 +O_\ell(N^{-\ell-1}).
\]
The $P_{11}$-to-$P_{11}$ block of
$\bigl(Q_1(K_{\ell,N}-z)Q_1\bigr)^{-1}$ is exactly
$D_{11,N}(z)^{-1}$ by the block inverse formula.  This proves
\eqref{eq:resolvent11}.  Notice that replacing
$2^\ell\Lambda_N-z$ by $(2^\ell-1)\Lambda_N$ changes the reciprocal by
\[
 O(N^{\ell-1})\,O(N^{-2\ell})=O(N^{-\ell-1}),
\]
which is contained in the stated error.
\end{proof}

We can now compare the three irreducible first-shell components at the first
non-trivial order.

\begin{proposition}\label{prop:splitting}
The energy-dependent effective Hamiltonian on $\cE_N$, at energies
$(2N)^\ell+O(N^{\ell-1})$, is
\begin{align}
 (2N)^\ell I
 +(2N)^{\ell-2}
 \bigg[&\binom\ell2(\mathcal J_N+\mathcal B_N)
 \nonumber\\[-1mm]
 &-\frac{\ell^2 2^\ell}{2^\ell-1}\mathcal B_N\bigg]
 +O_\ell(N^{\ell-3/2}).\label{eq:feshbach}
\end{align}
More precisely, for each fixed $C>0$ the remainder is uniform for all real
$z$ satisfying $|z-\Lambda_N|\leq C N^{\ell-1}$.
Hence the trivial component has correction $\gamma_\ell N^{\ell-1}$, while both
non-trivial components have the common leading correction
\begin{equation}\label{eq:gammaperp}
 \gamma_\ell^\perp
 =\ell(\ell-1)2^{\ell-2}
 -\frac{\ell^2 2^{2\ell-1}}{2^\ell-1}.
\end{equation}
Moreover,
\begin{equation}\label{eq:sector-gap}
 \gamma_\ell^\perp-\gamma_\ell
 =\ell2^{\ell-1}
 \left(1+\frac{\ell}{2^\ell-1}\right)>0.
\end{equation}
The spectral equation has a unique root in each of the three first-shell
components.  The invariant root is simple, while the roots associated with
$\cE_N^{\mathrm{ev},0}$ and $\cE_N^{\mathrm{odd}}$ have multiplicities
$N-1$ and $N$, respectively.  These roots satisfy
\begin{align}
 z_{\mathrm{triv},N}
 &=\Lambda_N+\gamma_\ell N^{\ell-1}
 +O_\ell(N^{\ell-3/2}),\label{eq:roottriv}\\
 z_{\mathrm{ev},N},\ z_{\mathrm{odd},N}
 &=\Lambda_N+\gamma_\ell^\perp N^{\ell-1}
 +O_\ell(N^{\ell-3/2}).\label{eq:rootnontriv}
\end{align}
Thus the bottom of the first-shell cluster lies in the hyperoctahedrally
invariant sector and is simple for all sufficiently large $N$.
\end{proposition}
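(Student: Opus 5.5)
We apply Lemma~\ref{lem:schur} to $H=K_{\ell,N}$ with $P=P_1$ and $Q=Q_1$, for real $z$ with $|z-\Lambda_N|\leq CN^{\ell-1}$. The hypotheses hold by Section~2: $P_1$ has finite rank, maps into $\operatorname{Dom}(K_{\ell,N})$, and preserves the form domain. The gap \eqref{eq:Qgap} makes $(Q_1(K_{\ell,N}-z)Q_1)^{-1}$ exist with norm $O(N^{-\ell})$.

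The Feshbach map is then
\[
 P_1(K_{\ell,N}-z)P_1-P_1K_{\ell,N}Q_1\bigl(Q_1(K_{\ell,N}-z)Q_1\bigr)^{-1}Q_1K_{\ell,N}P_1 .
\]
The first term is given by \eqref{eq:blocksPKP}. For the second, we insert \eqref{eq:blocksPKQ}, writing $P_1K_{\ell,N}Q_1=X+T$ with $X=-\ell(2N)^{\ell-1}2^{\ell/2}C_N$, which is supported on $P_{11}$. The product splits as follows:
\begin{itemize}[label=--]
\item $X$--$X$ term: it involves only the $P_{11}P_{11}$ block of the resolvent, so \eqref{eq:resolvent11} gives
\[
 \frac{\ell^2(2N)^{2\ell-2}2^\ell}{(2^\ell-1)\Lambda_N}\,C_NC_N^*+O\bigl(N^{2\ell-1}N^{-\ell-1}\bigr).
\]
By Lemma~\ref{lem:shellblocks}, $C_NC_N^*=\mathcal B_N$, so this equals $(2N)^{\ell-2}\frac{\ell^22^\ell}{2^\ell-1}\mathcal B_N+O(N^{\ell-2})$.
\item Cross terms: these are bounded by $O(N^{\ell-1/2})\,O(N^{-\ell})\,O(N^{\ell-1})=O(N^{\ell-3/2})$, using $\norm{X}=O(N^{\ell-1/2})$.
\item $T$--$T$ term: this is $O(N^{\ell-2})$.
\end{itemize}
Together with $-z$ this yields \eqref{eq:feshbach}, with remainder uniform in the window. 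The main subtlety is that the cross terms are only $O(N^{\ell-3/2})$, not better. That is why the stated remainder is $N^{\ell-3/2}$, and no finer resolvent information than \eqref{eq:resolvent11} is needed.

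Next, $\BN$-equivariance reduces the effective Hamiltonian to a scalar on each summand of \eqref{eq:decomposition}: the compressed resolvent commutes with $U_g$, and the decomposition is multiplicity-free. We read off the scalars from the table for $\mathcal J_N,\mathcal B_N$:
\begin{itemize}[label=--]
\item Trivial component: $\mathcal J_N+\mathcal B_N=6N-4$ and $\mathcal B_N=4N-4$. The correction is $(2N)^{\ell-2}N\bigl[6\binom\ell2-4\ell^22^\ell/(2^\ell-1)\bigr]+O(N^{\ell-3/2})$, which equals $\gamma_\ell N^{\ell-1}+O(N^{\ell-3/2})$.
\item Even and odd components: $\mathcal J_N+\mathcal B_N=2N+O(1)$ and $\mathcal B_N=2N+O(1)$. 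This gives $\gamma_\ell^\perp$ in \eqref{eq:gammaperp}.
\end{itemize}
The difference \eqref{eq:sector-gap} is the direct computation
\[
 2\ell(\ell-1)2^{\ell-2}\cdot(-1)+\frac{\ell^22^{2\ell-1}}{2^\ell-1}\cdot(-1)\cdot(-1),
\]
rearranged: $\gamma_\ell^\perp-\gamma_\ell=-\ell(\ell-1)2^{\ell-1}+\ell^22^{2\ell-1}/(2^\ell-1)$. This simplifies to $\ell2^{\ell-1}\bigl(1+\ell/(2^\ell-1)\bigr)$, since $\ell 2^\ell/(2^\ell-1)-(\ell-1)=1+\ell/(2^\ell-1)$.

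For the roots, on each component the spectral equation reads $f_\rho(z)=0$, where $f_\rho(z)=\Lambda_N-z+\gamma_\rho N^{\ell-1}+r_\rho(z)$ and $r_\rho(z)=O(N^{\ell-3/2})$ uniformly in the window. Existence of a root in $[\Lambda_N+\gamma_\rho N^{\ell-1}\pm KN^{\ell-3/2}]$ follows from the intermediate value theorem, since $f_\rho$ is continuous in $z$ (resolvent analyticity). For uniqueness, we note that $z\mapsto \mathcal F_{P_1}(K-z)$ is strictly decreasing: its derivative is $-P_1-(\text{positive Schur term})\leq -P_1$. Hence each scalar $f_\rho$ is strictly decreasing and has exactly one root in the window. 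By Lemma~\ref{lem:schur}, the kernel dimensions equal the dimensions of the components, namely $1$, $N-1$ and $N$, which gives the stated multiplicities and \eqref{eq:roottriv}--\eqref{eq:rootnontriv}.

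Since $\gamma_\ell^\perp-\gamma_\ell>0$ is of order one, and $N^{\ell-1}$ dominates $N^{\ell-3/2}$, the invariant root lies strictly lowest for large $N$. This shows that the bottom of the first-shell cluster is simple and lies in the invariant sector.
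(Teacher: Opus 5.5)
Your proposal is correct and follows the paper's proof essentially step by step. The shared steps are: Feshbach reduction onto $P_1$, using only the $P_{11}$ block \eqref{eq:resolvent11} for the leading coupling; the same leading/cross/remainder bookkeeping; scalar reduction on the multiplicity-free decomposition via the $\mathcal J_N,\mathcal B_N$ eigenvalue table; and strict monotonicity of the scalar spectral functions combined with the intermediate value theorem (take the window constant $C$ larger than $|\gamma_\ell|$ and $|\gamma_\ell^\perp|$ so the bracketing points lie inside the window). The paper's proof additionally establishes the coarse lower bound \eqref{eq:coarselower}, which the statement does not need but which is used later in Theorem~\ref{thm:general}.
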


\begin{proof}
Let
\[
 \mathcal H_{\mathrm{eff}}(z)=P_1K_{\ell,N}P_1
 -P_1K_{\ell,N}Q_1
 \bigl(Q_1(K_{\ell,N}-z)Q_1\bigr)^{-1}
 Q_1K_{\ell,N}P_1
\]
be the energy-dependent effective Hamiltonian.  Lemmas~\ref{lem:schur}
and~\ref{lem:resolvent} make it well defined uniformly in the stated window
and identify the equation $K_{\ell,N}u=zu$ with
$\mathcal H_{\mathrm{eff}}(z)P_1u=zP_1u$.
The projections $P_1,Q_1$ commute with $\BN$, as do $K_{\ell,N}$ and the
compressed resolvent.  Hence $\mathcal H_{\mathrm{eff}}(z)$ is
$\BN$-equivariant.  This justifies its scalar restriction to each irreducible
summand of \eqref{eq:decomposition} below.

We first localize the bottom without using a spectral parameter.  The $q=2$
form bound for $Q_1K_{\ell,N}Q_1$ and \eqref{eq:blocksPKQ}, together with
completion of the square in the block decomposition $P_1+Q_1$, give
\begin{equation}\label{eq:coarselower}
 K_{\ell,N}\geq
 \bigl(\Lambda_N-A_\ell N^{\ell-1}\bigr)I
\end{equation}
for a constant $A_\ell>0$ independent of $N$.  Indeed, if $p=P_1v$ and
$q=Q_1v$, then
\[
 2\operatorname{Re}\ip{P_1K_{\ell,N}Q_1q}{p}
 \geq-\frac{c_\ell N^\ell}{2}\norm q^2
      -A_\ell N^{\ell-1}\norm p^2.
\]
This is the inequality
$2|\ip{x}{y}|\leq\varepsilon\norm{x}^2+\varepsilon^{-1}\norm{y}^2$
with $\varepsilon$ chosen at the scale $N^\ell$.  It follows from
\eqref{eq:blocksPKQ}, because the squared $P_1$--$Q_1$ coupling is
$O(N^{2\ell-1})$ and is divided by a $Q_1$ form bound of size $N^\ell$.
Combining it with
$P_1K_{\ell,N}P_1=\Lambda_NP_1+O_\ell(N^{\ell-1})$ proves
\eqref{eq:coarselower}.  The first-shell trial space gives the matching
upper localization $\inf\sigma(K_{\ell,N})\leq
\Lambda_N+A_\ell N^{\ell-1}$.  Thus the stated Feshbach window contains all
spectral points relevant to the bottom.

Insert \eqref{eq:blocksPKP}, \eqref{eq:blocksPKQ}, and
\eqref{eq:resolvent11} into the Schur term.  Its
leading part is
\[
 \frac{\ell^2(2N)^{2\ell-2}2^\ell}
 {(2^\ell-1)\Lambda_N}\,\mathcal B_N
 =\frac{\ell^22^\ell}{2^\ell-1}
 (2N)^{\ell-2}\mathcal B_N.
\]
The cross terms containing $T_{\ell,N}$ are
$O_\ell(N^{\ell-3/2})$: one leading coupling has size
$O_\ell(N^{\ell-1/2})$, the resolvent has size $O_\ell(N^{-\ell})$, and the
remainder coupling has size $O_\ell(N^{\ell-1})$.  The square of the remainder
is $O_\ell(N^{\ell-2})$, and the $O_\ell(N^{-\ell-1})$ error in
\eqref{eq:resolvent11}, multiplied by the square of the leading coupling,
also contributes $O_\ell(N^{\ell-2})$.  Combining this with
\eqref{eq:blocksPKP} proves \eqref{eq:feshbach} with the stated operator-norm
remainder.

Finally, substitute the exact eigenvalues of $\mathcal J_N$ and
$\mathcal B_N$.  On $\cE_N^{\mathrm{triv}}$ their leading values are $2N$
and $4N$, respectively.  Hence the coefficient of $N^{\ell-1}$ in
\eqref{eq:feshbach} is
\[
 2^{\ell-2}\left(
 6\binom\ell2-\frac{4\ell^22^\ell}{2^\ell-1}\right)
 =\gamma_\ell.
\]
On both non-trivial components $\mathcal J_N$ vanishes and the leading value
of $\mathcal B_N$ is $2N$.  Their common coefficient is therefore
\[
 2^{\ell-2}\left(
 2\binom\ell2-\frac{2\ell^22^\ell}{2^\ell-1}\right)
 =\gamma_\ell^\perp.
\]
The constant parts $-4$ and $-2$ of the two exact $\mathcal B_N$ eigenvalues
only contribute at order $N^{\ell-2}$ and do not alter this comparison.
Subtracting the two displayed coefficients and simplifying gives
\eqref{eq:sector-gap}.  Its right-hand side is strictly positive.

It remains to incorporate the spectral parameter.  Since the three
representations in \eqref{eq:decomposition} are irreducible and occur with
multiplicity one, the restriction of $\mathcal H_{\mathrm{eff}}(z)$ to each
one is a scalar $h_{\tau,N}(z)$, where
$\tau\in\{\mathrm{triv},\mathrm{ev},\mathrm{odd}\}$.  The expansion just proved
is uniform in the energy window and gives
\begin{equation}\label{eq:heff-scalar}
 h_{\tau,N}(z)=\Lambda_N+a_{\tau,\ell}N^{\ell-1}
 +O_\ell(N^{\ell-3/2}),
 \qquad
 a_{\mathrm{triv},\ell}=\gamma_\ell,
 \quad
 a_{\mathrm{ev},\ell}=a_{\mathrm{odd},\ell}=\gamma_\ell^\perp.
\end{equation}
If $D(z)=Q_1(K_{\ell,N}-z)Q_1$, differentiation inside the gap gives
\[
 \frac{d}{dz}D(z)^{-1}=D(z)^{-2},\qquad
 h_{\tau,N}'(z)
 =-\norm{D(z)^{-1}Q_1K_{\ell,N}p_\tau}^2\leq0,
\]
where $p_\tau$ is any unit vector in the corresponding first-shell component.
Thus $h_{\tau,N}(z)-z$ is strictly decreasing.  Choose a fixed $M$ larger
than the uniform constant in the remainder in \eqref{eq:heff-scalar} and set
\[
 z_\pm=\Lambda_N+a_{\tau,\ell}N^{\ell-1}
 \pm MN^{\ell-3/2}.
\]
Then $h_{\tau,N}(z_-)-z_->0$ and
$h_{\tau,N}(z_+)-z_+<0$.  Continuity and strict monotonicity give a unique
root between $z_-$ and $z_+$.  Lemma~\ref{lem:schur} identifies it with the
unique spectral value attached to that first-shell representation.  The kernel
correspondence in Lemma~\ref{lem:schur} shows that its multiplicity equals the
dimension of the corresponding irreducible first-shell space, namely $1$,
$N-1$, or $N$.  This proves
\eqref{eq:roottriv}--\eqref{eq:rootnontriv}.  Finally,
\eqref{eq:sector-gap} and the $O(N^{\ell-3/2})$ root errors show that the invariant root is the unique
lowest component for all sufficiently large $N$; its multiplicity is one.
\end{proof}

\section{The Fourier gap and confinement}

The Fourier representation used by Gupta and Huang--Ye is particularly well suited
to the complement of the first shell.  Throughout this section
$d\mu=(2\pi)^{-N}dx$, and all torus integrals are taken with respect to $d\mu$.
We use the non-positive torus Laplacian
$\Delta=\sum_{j=1}^N\partial_{x_j}^2$; thus
$\Delta e^{in\cdot x}=-|n|^2e^{in\cdot x}$.  Let
\[
 \omega(x)=\sum_{j=1}^N\sin^2(x_j/2),\qquad x\in\T^N,
\]
and use the convention
\[
 D^k\phi=
 \begin{cases}
  \Delta^{k/2}\phi,&k\ \text{even},\\
  \nabla\Delta^{(k-1)/2}\phi,&k\ \text{odd}.
 \end{cases}
\]
Our Fourier convention is
\[
 \widehat f(n)=\int_{\T^N}f(x)e^{-in\cdot x}\,d\mu(x),
 \qquad
 f(x)=\sum_{n\in\Z^N}\widehat f(n)e^{in\cdot x}.
\]
For $u\in C_c(\Z^N)$ with $u(0)=0$, define the trigonometric polynomial
$\Psi$ by
\[
 \widehat\Psi(0)=0,
 \qquad
 \widehat\Psi(n)=\frac{u(n)}{|n|^{2\ell}}\quad(n\ne0).
\]
It is smooth and has zero mean.  Parseval's identity and the definition of
$D^\ell$ give
\begin{align}
 \sum_{n\ne0}\frac{|u(n)|^2}{|n|^{2\ell}}
 &=\int_{\T^N}|D^\ell\Psi|^2,\label{eq:fourierden}\\
 \sum_n|D^\ell u(n)|^2
 &=4^\ell\int_{\T^N}|D^{2\ell}\Psi|^2\omega^\ell.
 \label{eq:fouriernum}
\end{align}
Indeed, the Fourier coefficient of $D^{2\ell}\Psi=\Delta^\ell\Psi$ is
$(-1)^\ell u(n)$.  The discrete Fourier symbol of $L_N$ is
\[
 2N-2\sum_{j=1}^N\cos x_j=4\omega(x),
\]
and hence Parseval yields \eqref{eq:fouriernum}; the sign $(-1)^\ell$ is
immaterial.  Thus $u$ vanishes on $S_1$ precisely when $\Psi$ has no Fourier
modes with $|n|^2=1$.  These identities hold first on the finitely supported
form core.  Density and closure extend them, and the inequalities derived
from them, to the compressed form domains used below.

For $q\in\N$, let
\[
 \mathscr F_{q,N}
 =\{\phi:\widehat\phi(n)=0\text{ whenever }|n|^2<q\}.
\]

Write $\rho=2\omega/N$, and let $P_{<q}$ be the
orthogonal Fourier projection onto $|n|^2<q$.

The following multiplier estimate shows that multiplication by a fixed power
of $\rho$ creates only a small low-frequency component.

\begin{lemma}\label{lem:multiplier}
For every fixed $q\in\N$ and $\beta>0$, there are
$N^{\mathrm{mult}}_{\beta,q}\in\N$ and positive constants
$C_{\beta,q},c_\beta$ such that, for all
$N\geq N^{\mathrm{mult}}_{\beta,q}$,
\begin{equation}\label{eq:multiplier}
 \sup_{\substack{g\in\operatorname{ran}P_{<q}\\ \norm g_2=1}}
 \int_{\T^N}|1-\rho^{-\beta}|^2|g|^2\,d\mu
 \leq \frac{C_{\beta,q}}N+C_{\beta,q}e^{-c_{\beta}N}.
\end{equation}
Consequently, for the same dimensions, if $h=\rho^\beta$,
$\phi\in\mathscr F_{q,N}$ and $v=h\phi$,
then
\begin{equation}\label{eq:lowprojection}
 \norm{P_{<q}v}_2\leq C_{\beta,q}N^{-1/2}\norm v_2.
\end{equation}
\end{lemma}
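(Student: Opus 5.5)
The plan is to control $\rho=2\omega/N=\frac{2}{N}\sum_j\sin^2(x_j/2)$ by concentration of measure, and then use the structure of low-frequency functions. Under $d\mu$ the variables $\sin^2(x_j/2)$ are i.i.d. with mean $1/2$, values in $[0,1]$ and variance $1/8$. Hence $\rho=1+\frac1N\sum_j\xi_j$ with $\xi_j=2\sin^2(x_j/2)-1=-\cos x_j$ centered and bounded. Hoeffding's inequality gives $\mu(|\rho-1|>1/2)\le 2e^{-cN}$. On the good set $\{|\rho-1|\le1/2\}$ we have $|1-\rho^{-\beta}|\le C_\beta|\rho-1|$.

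A typical $g\in\operatorname{ran}P_{<q}$ is a combination of $e^{in\cdot x}$ with $|n|^2<q$. Such a function depends on at most $q-1$ coordinates per mode, but it can spread over many modes, so we cannot simply condition on finitely many coordinates. Instead I would bound the two regions separately.

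\emph{Good set.} Here it suffices to show
\[
\int(\rho-1)^2|g|^2\,d\mu\le \frac{C_q}{N}\norm g_2^2 .
\]
Write $\rho-1=-\frac1N\sum_j\cos x_j$. Multiplication by $\cos x_j$ shifts Fourier indices by $\pm e_j$, so $(\rho-1)g=-\frac{1}{2N}\sum_j\sum_\pm e^{\pm ix_j}g$. The Fourier coefficient at $m$ is $-\frac{1}{2N}\sum_{j,\pm}\widehat g(m\mp e_j)$. Each $m$ receives contributions only from neighbours $m\mp e_j$ with $|m\mp e_j|^2<q$. The number of such neighbours is at most $2|\operatorname{supp} m|+2\cdot\#\{\text{zero coords}\}$, and the zero coordinates contribute only when $m$ itself has $|m|^2\le q$; so for general $m$ the count can be as large as $2N$. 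Cauchy--Schwarz in this crude form gives only $O(1)$, so the naive bound fails. The better route is to compute $\norm{(\rho-1)g}_2^2=\ip{(\rho-1)^2g}{g}$ directly. We have $(\rho-1)^2=\frac1{N^2}\sum_{j,k}\cos x_j\cos x_k$. The diagonal part is $\frac1{N^2}\sum_j\cos^2x_j\le \frac1N$. For the off-diagonal part, the form $\ip{\cos x_j\cos x_k\, g}{g}$ pairs $\widehat g(n)$ with $\widehat g(n')$ where $n'-n=\pm e_j\pm e_k$. Both $n$ and $n'$ lie in the finite-level set $\{|n|^2<q\}$, which forces $j,k$ into the supports of $n$ or $n'$ except for moves that change zeros into $\pm1$. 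The key lemma I would aim for is that the operator $\frac1{N^2}\sum_{j\ne k}P_{<q}\cos x_j\cos x_k P_{<q}$ has norm $O_q(1/N)$. I expect this to follow by grouping by the orbit level $s=|n|^2$ and applying a Schur test with weights $N^{-(\#\text{support})/2}$, mimicking the dimensional-edge counting of Lemma~\ref{lem:pathbound}: each zero-to-$\pm1$ move carries a factor $O(N^{1/2})$, and at most two such moves against $N^{-2}$ gives $O(N^{-1})$. This Schur-test bound is the main obstacle.

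\emph{Bad set.} Here $\rho^{-\beta}$ may blow up near $x=0$, where $\rho\to0$. Since $g$ is a trigonometric polynomial of degree bounded in terms of $q$, I would use H\"older's inequality. By hypercontractivity on the torus, $\norm g_4\le C_q\norm g_2$, because the degree is at most $\sqrt{q}$ per coordinate and at most $q$ in total. It then suffices to show
\[
\int|1-\rho^{-\beta}|^4\,\1_{\mathrm{bad}}\,d\mu\le Ce^{-cN}.
\]
For this, $\mu(\rho<t)\le (Ct)^{N/2}$ in small-ball form near the origin (a product of $N$ one-dimensional small-ball estimates via the exponential Chebyshev bound $\int e^{-s\omega}\le (Cs^{-1/2})^N$). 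Combined with Hoeffding this gives integrability of $\rho^{-4\beta}$ over $\{\rho<1/2\}$ with an exponentially small bound once $N>8\beta$. This yields \eqref{eq:multiplier}.

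\emph{Consequence \eqref{eq:lowprojection}.} Take $g=P_{<q}v/\norm{P_{<q}v}_2$. Since $\phi\in\mathscr F_{q,N}$, we have $\ip{\phi}{g}=0$. Then
\[
\norm{P_{<q}v}_2=\ip{v}{g}=\ip{v}{g}-\ip{h\phi}{\rho^{-\beta}g}+\ip{\phi}{g}\cdot 0,
\]
and more directly $\ip{v}{g}=\ip{v}{(1-\rho^{-\beta})g}+\ip{\phi}{g}$, where the last term is $0$ because $h$ is real. Cauchy--Schwarz together with \eqref{eq:multiplier} then gives $\norm{P_{<q}v}_2\le \norm v_2\,(C/N+Ce^{-cN})^{1/2}\le CN^{-1/2}\norm v_2$.
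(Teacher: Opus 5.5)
Your bad-set argument matches the paper's: the exponential-Chebyshev small-ball bound $\mu\{\rho\le r\}\le(C_0r)^{N/2}$, Hoeffding, integrability of $\rho^{-4\beta}$ for $N>8\beta$, and H\"older with $\norm{g}_4\le C_q\norm{g}_2$. So does your derivation of \eqref{eq:lowprojection} from $\ip{v}{g}=\ip{v}{(1-\rho^{-\beta})g}$. The only divergence is on $\{\rho\ge 1/2\}$.

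There, the step you call the main obstacle is not one. The paper reuses the tool you already invoke on the bad set:
\[
 \int_{\{\rho\ge1/2\}}|1-\rho^{-\beta}|^2|g|^2\,d\mu
 \le C_\beta\Bigl(\int_{\T^N}(\rho-1)^4\,d\mu\Bigr)^{1/2}\norm{g}_4^2,
 \qquad
 \mathbb E(\rho-1)^4=\frac{\tfrac38N+\tfrac34N(N-1)}{N^4}=O(N^{-2}).
\]
With $\norm{g}_4\le C_q\norm{g}_2$ this gives $C_{\beta,q}N^{-1}\norm{g}_2^2$ at once. As written, your proposal only conjectures the good-set bound ("I expect"), so it is incomplete at that point. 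Nothing in it is wrong, however.

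Your Fourier-side idea does close the step. It is easier at first order than with the second-order form you set up. In the Fourier basis, multiplication by $\rho-1$ has entries $-1/(2N)$ between lattice neighbours. Apply the Schur test with weights $N^{-|\operatorname{supp}n|/2}$ on both sides:
\begin{itemize}
\item at most $2N$ neighbours enlarge the support, each with weight ratio $N^{-1/2}$;
\item at most $2q$ other neighbours occur, each with ratio at most $N^{1/2}$.
\end{itemize}
So row and column sums are at most $(1+q)N^{-1/2}$, and $\norm{(\rho-1)P_{<q}}\le(1+q)N^{-1/2}$. This is an explicit operator bound obtained without hypercontractivity.

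On the bad set, you could then use the crude bound $\norm{g}_\infty^2\le\#\{n:|n|^2<q\}\,\norm{g}_2^2=O_q(N^{q-1})\norm{g}_2^2$. This polynomial factor is harmless against the exponentially small bad-set integral. That route would remove Gross's inequality from the lemma entirely. The paper instead treats both regions with the single H\"older--hypercontractivity argument, which is shorter but depends on the dimension-free $L^2\to L^4$ bound.
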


\begin{proof}
Let $H_t=e^{t\Delta}$ be the heat semigroup on the product torus.  The
dimension-free logarithmic Sobolev inequality of Gross \cite{Gross1975}
provides a time $t_0>0$, independent of $N$, such that
$\norm{H_{t_0}f}_4\leq\norm f_2$.  If
$g\in\operatorname{ran}P_{<q}$, put $f=H_{-t_0}g$.  Since
$|n|^2<q$ on the Fourier support of $g$,
\begin{equation}\label{eq:hypercontractive}
 \norm g_4\leq\norm f_2\leq e^{t_0q}\norm g_2=:C_q\norm g_2,
 \qquad g\in\operatorname{ran}P_{<q},
\end{equation}
with $C_q$ independent of $N$.

On $\{\rho\geq1/2\}$, the map $t\mapsto t^{-\beta}$ is Lipschitz, so
$|1-\rho^{-\beta}|\leq C_\beta|1-\rho|$.  The fourth
central moment of the empirical mean
$\rho=N^{-1}\sum_{j=1}^N(1-\cos x_j)$ is $O(N^{-2})$.  Indeed, if
$Z_j=-\cos x_j$, then the $Z_j$ are independent, centered,
$\mathbb E Z_j^2=1/2$, and $\mathbb E Z_j^4=3/8$, so
\[
 \mathbb E(\rho-1)^4
 =\frac{N(3/8)+3N(N-1)(1/2)^2}{N^4}
 \leq\frac{C}{N^2}.
\]
Hence H\"older's inequality and \eqref{eq:hypercontractive} give
\[
 \int_{\{\rho\geq1/2\}}|1-\rho^{-\beta}|^2|g|^2\,d\mu
 \leq C_\beta
 \left(\int|\rho-1|^4d\mu\right)^{1/2}\norm g_4^2
 \leq C_{\beta,q}N^{-1}\norm g_2^2.
\]
For the remaining region we first derive the required small-ball estimate.
If $X$ is uniform on $[-\pi,\pi]$ and $Y=1-\cos X$, then
$Y\geq2X^2/\pi^2$.  Hence, for $s\geq1$,
\[
 \mathbb E e^{-sY}
 \leq\frac1{2\pi}\int_{-\pi}^{\pi}e^{-2sx^2/\pi^2}\,dx
 \leq C s^{-1/2}.
\]
Independence and exponential Markov inequality with $s=r^{-1}$ now yield
\[
 \begin{split}
 \mu\{\rho\leq r\}
 &=\mathbb P\left\{e^{-r^{-1}\sum_{j=1}^NY_j}\geq e^{-N}\right\}\\
 &\leq e^{N}\bigl(\mathbb E e^{-Y/r}\bigr)^N
 \leq(C_0r)^{N/2},\qquad 0<r<1/2.
 \end{split}
\]
Hoeffding's inequality \cite{Hoeffding1963} also gives
$\mu\{\rho<1/2\}\leq e^{-cN}$.  Choose
$r_0<\min(1/2,(2C_0)^{-1})$.  On $r_0\leq\rho<1/2$ the inverse weight is
bounded and Hoeffding applies.  On the intervals
$2^{-k-1}r_0\leq\rho<2^{-k}r_0$,
\begin{align*}
 \int_{\{\rho<r_0\}}\rho^{-4\beta}\,d\mu
 &\leq r_0^{-4\beta}
 \sum_{k\geq0}2^{4\beta(k+1)}
 \mu\{\rho<2^{-k}r_0\}\\
 &\leq C_\beta
 \sum_{k\geq0}2^{4\beta k}2^{-(k+1)N/2}
 \leq C_\beta e^{-c_\beta N}
\end{align*}
for $N>8\beta+2$.  Therefore
\[
 \int_{\{\rho<1/2\}}\rho^{-4\beta}\,d\mu
 \leq C_\beta e^{-c_\beta N}.
\]
The restriction to large dimensions is essential.  Already for the constant
function $g=1$, one has
$\rho^{-2\beta}\asymp |x|^{-4\beta}$ near the origin, so the integral in
\eqref{eq:multiplier} is locally finite only when $N>4\beta$.  The present
fourth-moment argument uses the stronger sufficient condition
$N>8\beta+2$, which is absorbed into
$N^{\mathrm{mult}}_{\beta,q}$.
On $\{\rho<1/2\}$ we have
$|1-\rho^{-\beta}|^4\leq C_\beta(1+\rho^{-4\beta})$.  A second application of
H\"older's inequality and \eqref{eq:hypercontractive} therefore gives
\[
 \int_{\{\rho<1/2\}}|1-\rho^{-\beta}|^2|g|^2d\mu
 \leq C_{\beta,q}e^{-c_\beta N}\norm g_2^2.
\]
Together with the estimate on $\{\rho\geq1/2\}$, this proves
\eqref{eq:multiplier}.

Finally $P_{<q}\phi=0$.  For every $g\in\operatorname{ran}P_{<q}$,
\[
 0=\ip{\phi}{g}=\ip{h^{-1}v}{g},\qquad
 \ip{v}{g}=\ip{v}{(1-h^{-1})g}.
\]
Taking absolute values, applying the standard $L^2$ estimate
\[
 |\ip{v}{(1-h^{-1})g}|
 \leq\norm v_2\norm{(1-h^{-1})g}_2,
\]
and then using \eqref{eq:multiplier}, we obtain
\[
 |\ip{v}{g}|\leq C_{\beta,q}N^{-1/2}\norm v_2\norm g_2.
\]
Taking the supremum over unit vectors $g$ proves
\eqref{eq:lowprojection}.  For complex functions, apply the same argument
to the Hermitian inner product.  The zero of $h$ at the origin is handled by
$h_\varepsilon=(\rho+\varepsilon)^\beta$; the inverse-moment estimate above
provides an integrable majorant, so dominated convergence applies as
$\varepsilon\downarrow0$.
\end{proof}

The following approximation statement permits the use of power
ground states without excluding a neighbourhood of the origin.

\begin{lemma}\label{lem:regularization}
Let $\gamma\geq1$, let $\alpha<0$, and let $\phi$ be smooth on $\T^N$.  Put
$V=\omega^\gamma$ and
\[
 f_\varepsilon=(\omega+\varepsilon)^\alpha,
 \qquad h_\varepsilon=(\rho+\varepsilon)^{\gamma/2}.
\]
The first-order ground-state identity and the second-order periodic Rellich
identity are valid with $f_\varepsilon$, and the conjugation identity for
$\int h_\varepsilon^2|\nabla\phi|^2$ is valid with $h_\varepsilon$.  As
$\varepsilon\downarrow0$, all coefficient terms converge in $L^1$, while the
non-negative square terms are lower semicontinuous.  Consequently the
inequalities obtained by discarding those squares remain valid for
$f=\omega^\alpha$ and $h=\rho^{\gamma/2}$.
\end{lemma}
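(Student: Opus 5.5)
The plan is a standard regularize-then-pass-to-the-limit argument. All three identities (the first-order ground-state identity, the periodic Rellich identity, and the conjugation identity for $\int h_\varepsilon^2|\nabla\phi|^2$) are exact integration-by-parts formulas on the compact manifold $\T^N$. They hold for any strictly positive smooth weight. For $\varepsilon>0$ the functions $f_\varepsilon=(\omega+\varepsilon)^\alpha$ and $h_\varepsilon=(\rho+\varepsilon)^{\gamma/2}$ are smooth and bounded below by a positive constant, so the identities hold verbatim. One writes $\phi=f_\varepsilon\xi$ (respectively $\phi=h_\varepsilon^{-1}\eta$), expands $|\nabla\phi|^2$ and the analogous second-order expression, and integrates by parts with no boundary terms. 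The result is
\[
 \int V|\nabla\phi|^2=\int W_\varepsilon|\phi|^2+\int(\text{non-negative square})_\varepsilon .
\]
Here each coefficient $W_\varepsilon$ is a finite combination of $V$, $\nabla V$, $\Delta V$ and logarithmic derivatives $\nabla f_\varepsilon/f_\varepsilon$, $\Delta f_\varepsilon/f_\varepsilon$ (and their second-order analogues). The first step is therefore to record these identities explicitly at $\varepsilon>0$. Since everything is smooth there, this is routine.

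The second step establishes $L^1$ convergence of the coefficient terms $\int W_\varepsilon|\phi|^2$. The logarithmic derivatives have the form $\alpha\nabla\omega/(\omega+\varepsilon)$, plus terms like $\alpha(\alpha-1)|\nabla\omega|^2/(\omega+\varepsilon)^2$. They are bounded pointwise, uniformly in $\varepsilon$, by $C(1+\omega^{-1})$ or $C\omega^{-2}|\nabla\omega|^2\le C\omega^{-1}$, using $|\nabla\omega|^2\le C\omega$. They are multiplied by $V=\omega^\gamma$ with $\gamma\ge1$, or by derivatives of $V$ of order at most two. Each product is therefore dominated by $C(\omega^{\gamma-1}+\omega^{\gamma-2}+1)$ times the bounded function $|\phi|^2+|\nabla\phi|^2$. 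In the second-order identity the majorant has at worst an $\omega^{-1}$ singularity, possibly $\omega^{\gamma-2}$. That singularity is integrable near the origin of $\T^N$ once $N\ge3$, since $\omega\asymp|x|^2$ there. For the $h_\varepsilon$ identity one should use the inverse-moment bound $\int_{\{\rho<1/2\}}\rho^{-4\beta}\,d\mu\le C_\beta e^{-c_\beta N}$ from the proof of Lemma~\ref{lem:multiplier} as an integrable majorant. Pointwise convergence on $\T^N\setminus\{0\}$ is clear, and dominated convergence then gives convergence of every coefficient term. The left-hand side $\int V|\nabla\phi|^2$ or $\int h_\varepsilon^2|\nabla\phi|^2$ converges trivially by dominated convergence, since its weights are bounded.

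The third step handles the squares. Each square term has the form $\int V f_\varepsilon^2|\nabla(\phi/f_\varepsilon)|^2$ or its second-order analogue. Its integrand is non-negative and converges pointwise away from the origin to the corresponding expression with $f$. Fatou's lemma gives
\[
 \int(\text{square})_0\le\liminf_{\varepsilon\downarrow0}\int(\text{square})_\varepsilon ,
\]
which is the asserted lower semicontinuity. The identity at level $\varepsilon$ reads $\text{LHS}_\varepsilon=\text{coef}_\varepsilon+\text{sq}_\varepsilon$, with $\text{sq}_\varepsilon\ge0$. Dropping $\text{sq}_\varepsilon$ and letting $\varepsilon\downarrow0$ yields $\text{LHS}_0\ge\text{coef}_0$, which is the inequality with $f=\omega^\alpha$ or $h=\rho^{\gamma/2}$. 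One does not even need Fatou for this conclusion; it is needed only to state lower semicontinuity.

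The main obstacle is producing an $\varepsilon$-uniform integrable majorant for the second-order Rellich coefficients. There, terms such as $V|\nabla\omega|^4/(\omega+\varepsilon)^4$ and $V(\Delta\omega)^2/(\omega+\varepsilon)^2$ carry the strongest singularity. I would first check the algebra term by term, using $|\nabla\omega|^2\le\omega$, $|\Delta\omega|\le N/2$ and $\omega/(\omega+\varepsilon)\le1$, to reduce each term to a constant times $\omega^{\gamma-2}$. I would then appeal to integrability of $|x|^{2\gamma-4}$ near $0$ in dimension $N\ge3$, enlarging the dimension threshold if necessary, in the same way the paper absorbs the condition $N>8\beta+2$ in Lemma~\ref{lem:multiplier}.
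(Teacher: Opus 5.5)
Your proposal follows the same route as the paper: exact identities at $\varepsilon>0$, dominated convergence for the coefficient terms, and Fatou for the squares. The one real difference is the dominating function, and yours is cruder than it needs to be.

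In the identities the lemma refers to, the second-order formula comes from applying the first-order ground-state identity to each derivative $\phi_j$. Its coefficients therefore multiply $|\nabla\phi|^2$ and involve only $\operatorname{div}(V\nabla f_\varepsilon)/f_\varepsilon$, $\Delta V$ and $V_{ij}$. Terms such as $V|\nabla\omega|^4/(\omega+\varepsilon)^4$ or $V(\Delta\omega)^2/(\omega+\varepsilon)^2$ never occur. Using $|\nabla\omega|^2\le\omega$, $|\Delta\omega|\le N/2+\omega$ and $\omega/(\omega+\varepsilon)\le1$, the paper bounds every coefficient by $C_{N,\alpha,\gamma}(\omega^{\gamma-1}+\omega^{\gamma})$, which is bounded precisely because $\gamma\ge1$. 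Likewise, the quantity multiplying $|\phi|^2$ in the conjugation identity is $h_\varepsilon\Delta h_\varepsilon$. Since $|\nabla\rho|^2\le2(\rho+\varepsilon)/N$, it is bounded by $C(\rho+\varepsilon)^{\gamma-1}$, and hence by a constant.

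So for fixed $N,\alpha,\gamma$ the majorant is a constant, and the lemma holds in every dimension. You need neither integrability of $|x|^{2\gamma-4}$ nor the inverse-moment bound of Lemma~\ref{lem:multiplier}. That bound concerns the much stronger singularity $\rho^{-4\beta}$ and requires $N>8\beta+2$. As written, your argument proves the lemma only above a dimension threshold: $N\ge3$, or $N>4\gamma+2$ if you actually invoke the inverse moment. This suffices for the large-$N$ use in Proposition~\ref{prop:weightedgap}, but the threshold is an artifact of your estimate, not of the statement.

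One small correction to your last remark. For the conjugation identity, the non-negative term $\int|\nabla(h\phi)|^2$ is the one that is kept later in Proposition~\ref{prop:weightedgap}, not discarded. The lower semicontinuity you obtain from Fatou is therefore actually used downstream, not merely stated.
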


\begin{proof}
For every $\varepsilon>0$, the functions $f_\varepsilon$ and
$h_\varepsilon$ are smooth and strictly positive, while $V$ is smooth and
non-negative.  To record the signs in the second-order formula, two
integrations by parts first give the weighted Bochner identity
\[
 \int V|\Delta\phi|^2
 =\int V|\nabla^2\phi|^2-
 \int(\Delta V)|\nabla\phi|^2
 +\operatorname{Re}\sum_{i,j}\int V_{ij}\phi_i\overline{\phi_j}.
\]
Applying the first-order ground-state identity to each derivative $\phi_j$
converts the Hessian term into the last square displayed below.  Thus
integration by parts gives, with the complex conjugates shown explicitly,
\begin{align*}
 \int V|\nabla\phi|^2
 &=-\int\frac{\operatorname{div}(V\nabla f_\varepsilon)}
 {f_\varepsilon}|\phi|^2
 +\int V\left|\nabla\phi-
 \frac{\nabla f_\varepsilon}{f_\varepsilon}\phi\right|^2,\\
 \int V|\Delta\phi|^2
 &=-\int\left(
 \frac{\operatorname{div}(V\nabla f_\varepsilon)}{f_\varepsilon}
 +\Delta V\right)|\nabla\phi|^2
 +\operatorname{Re}\sum_{i,j}\int V_{ij}\phi_i\overline{\phi_j}\\
 &\quad+
 \int V\left|\nabla^2\phi-
 \frac{\nabla f_\varepsilon}{f_\varepsilon}
 \otimes\nabla\phi\right|^2,
\end{align*}
and
\[
 \int h_\varepsilon^2|\nabla\phi|^2
 =\int|\nabla(h_\varepsilon\phi)|^2
 +\int\frac{\Delta h_\varepsilon}{h_\varepsilon}
 |h_\varepsilon\phi|^2.
\]
We verify the limiting coefficients explicitly.  Differentiation gives
\begin{align}
 \frac{\operatorname{div}(V\nabla f_\varepsilon)}
 {f_\varepsilon}
={}&\alpha\frac{\omega^\gamma}{\omega+\varepsilon}\Delta\omega
 +\alpha\gamma\frac{\omega^{\gamma-1}}{\omega+\varepsilon}
 |\nabla\omega|^2 \nonumber\\
 &+\alpha(\alpha-1)
 \frac{\omega^\gamma}{(\omega+\varepsilon)^2}|\nabla\omega|^2.
 \label{eq:regularized-f-coefficient}
\end{align}
Since
\[
 |\nabla\omega|^2\leq\omega,
 \qquad |\Delta\omega|\leq N/2+\omega,
 \qquad 0\leq\frac{\omega}{\omega+\varepsilon}\leq1,
\]
the absolute value of \eqref{eq:regularized-f-coefficient} is bounded, for
fixed $N,\alpha,\gamma$, by
$C_{N,\alpha,\gamma}(\omega^{\gamma-1}+\omega^\gamma)$.  It converges almost
everywhere to
\begin{equation}\label{eq:limit-f-coefficient}
 \alpha(\alpha+\gamma-1)\omega^{\gamma-2}|\nabla\omega|^2
 +\alpha\omega^{\gamma-1}\Delta\omega.
\end{equation}
The derivatives of the weight are
\[
 V_{ij}=\gamma(\gamma-1)\omega^{\gamma-2}\omega_i\omega_j
 +\gamma\omega^{\gamma-1}\omega_{ij}.
\]
The quadratic form of the first term is
$\gamma(\gamma-1)\omega^{\gamma-2}
|\sum_i\omega_i\phi_i|^2$, which is bounded by
$C_\gamma\omega^{\gamma-1}|\nabla\phi|^2$ because
$|\nabla\omega|^2\leq\omega$.  The second term has
$\omega_{ij}=\frac12\cos x_i\,\delta_{ij}$ and obeys the same bound.  Thus all coefficients in the two
$f_\varepsilon$ identities have an integrable majorant independent of
$\varepsilon$.

For the third identity, with $\beta=\gamma/2$,
\begin{align}
 h_\varepsilon\Delta h_\varepsilon
={}&\beta(\rho+\varepsilon)^{\gamma-1}\Delta\rho
 +\beta(\beta-1)(\rho+\varepsilon)^{\gamma-2}
 |\nabla\rho|^2.\label{eq:regularized-h-coefficient}
\end{align}
The inequality
$|\nabla\rho|^2\leq(2\rho-\rho^2)/N\leq2(\rho+\varepsilon)/N$ shows that the
second term is bounded by a constant times
$(\rho+\varepsilon)^{\gamma-1}$.  Since $0\leq\rho\leq2$ and $\gamma\geq1$,
\eqref{eq:regularized-h-coefficient} has an integrable bound independent of
$\varepsilon$ and converges almost everywhere to $h\Delta h$.

Dominated convergence now applies to every coefficient that is retained.
Fatou's lemma applies to each non-negative square, so discarding that square
before taking the limit preserves the resulting inequality.  The order of
operations used below is therefore unambiguous: first fix $N$ and the value
of $\alpha$ (eventually $\alpha=-\sqrt N$), next use the regularized
identities, then let $\varepsilon\downarrow0$, and only afterwards estimate
the resulting explicit coefficients as $N\to\infty$.  No dominated-convergence
claim uniform in $\alpha$ or $N$ is made or needed.  This proves the lemma.
\end{proof}

We next combine the multiplier lemma with weighted ground-state identities.

\begin{proposition}\label{prop:weightedgap}
Fix $q,\gamma\in\N$.  Uniformly for smooth
$\phi\in\mathscr F_{q,N}$,
\begin{align}
 \int|\nabla\phi|^2\omega^\gamma
 &\geq(q-o(1))\int|\phi|^2\omega^\gamma,
 \label{eq:weightedPoincare}\\
 \int|\nabla\phi|^2\omega^\gamma
 &\geq\frac{qN}{2}(1-o(1))
 \int|\phi|^2\omega^{\gamma-1},
 \label{eq:weightedHardy}\\
 \int|\Delta\phi|^2\omega^\gamma
 &\geq(q-o(1))\int|\nabla\phi|^2\omega^\gamma,
 \label{eq:weightedRellichPoincare}\\
 \int|\Delta\phi|^2\omega^\gamma
 &\geq\frac{qN}{2}(1-o(1))
 \int|\nabla\phi|^2\omega^{\gamma-1}.
 \label{eq:weightedRellich}
\end{align}
More precisely, there are constants $A_{q,\gamma}>0$ and
$N_{q,\gamma}$ such that, for $N\geq N_{q,\gamma}$, every occurrence of
$o(1)$ in these four inequalities may be replaced in absolute value by
$A_{q,\gamma}N^{-1/2}$.
\end{proposition}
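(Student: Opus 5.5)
We plan to prove the two first-order inequalities \eqref{eq:weightedPoincare} and \eqref{eq:weightedHardy} first, and then obtain the second-order ones \eqref{eq:weightedRellichPoincare}--\eqref{eq:weightedRellich} from them. The reduction to first order applies the first-order bounds to each derivative $\phi_j$, together with the weighted Bochner identity recorded in Lemma~\ref{lem:regularization}. Each $\phi_j$ again lies in $\mathscr F_{q,N}$, since differentiation does not create Fourier modes. By that Bochner identity, $\int\omega^\gamma|\Delta\phi|^2$ equals $\int\omega^\gamma|\nabla^2\phi|^2$ plus coefficient terms. Those coefficient terms involve $\Delta V$ and $V_{ij}$. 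They are bounded by $C_\gamma\omega^{\gamma-1}|\nabla\phi|^2$ with a constant independent of $N$, apart from the $\Delta\omega$ contribution $\tfrac{\gamma}{2}\sum_j\cos x_j\,\omega^{\gamma-1}$. That term has size $O(\sqrt N\,\omega^{\gamma-1})$ only after centering, because $\Delta\omega=N/2-\omega$. We keep it exactly and combine it with the Hardy-type term on the correct side. Then \eqref{eq:weightedRellich} follows from \eqref{eq:weightedHardy} applied to each $\phi_j$, since the term $\int\omega^{\gamma-1}|\nabla\phi|^2$ appears with coefficient $qN/2$ at leading order. Likewise \eqref{eq:weightedRellichPoincare} follows from \eqref{eq:weightedPoincare} summed over $j$. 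We must check that the signed $\Delta V$ contribution enters at relative order $O(N^{-1/2})$ or with a favourable sign. For this we use the concentration $\omega=N/2\,(1+O(N^{-1/2}))$ on the bulk, and handle the small-$\omega$ region through Lemma~\ref{lem:multiplier}.

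For the first-order estimates we conjugate by the power weight $h=\rho^{\gamma/2}$, using the third identity of Lemma~\ref{lem:regularization}. Since $\omega^\gamma=(N/2)^\gamma h^2$, that identity gives
\[
\int h^2|\nabla\phi|^2=\int|\nabla(h\phi)|^2+\int\frac{\Delta h}{h}|h\phi|^2 .
\]
We bound the two terms on the right separately, with $v=h\phi$.

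\textbf{Kinetic term.} Lemma~\ref{lem:multiplier} gives $\norm{P_{<q}v}\leq CN^{-1/2}\norm v$. Since $|n|^2\geq q$ on the range of $I-P_{<q}$, we get
\[
\int|\nabla v|^2\geq q\norm{(I-P_{<q})v}^2\geq q(1-CN^{-1})\norm v^2 .
\]

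\textbf{Potential term.} We compute $h^{-1}\Delta h=\beta\rho^{-1}\Delta\rho+\beta(\beta-1)\rho^{-2}|\nabla\rho|^2$ with $\beta=\gamma/2$. Here $\Delta\rho=(2/N)(N/2-\omega)\cdot(1/1)$, which is $O(N^{-1/2})$ times $\rho^{-1}$ on the bulk. Also $|\nabla\rho|^2\le 2\rho/N$. Hence this term is $O(N^{-1/2})$ in average relative to $|v|^2$. Its contribution from the region $\rho<1/2$ is exponentially small after Hölder's inequality and the inverse-moment bound of Lemma~\ref{lem:multiplier}. This only needs to hold up to the $N^{-1/2}$ error; the fluctuations of $\Delta\rho$ are handled by the same fourth-moment bound, applied to $|v|^2$ through hypercontractivity only on the low part. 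This is where we expect the main obstacle. The fluctuation $\rho-1$ multiplies $|v|^2$ with $v$ a general high-frequency function, so hypercontractivity is unavailable. We would first try to absorb it by the pointwise bound $|\Delta\rho|\le C$ combined with a sign argument: the negative part of $h^{-1}\Delta h$ is $O(1)$, hence $O(1/q)$ relative to the kinetic term. That is insufficient. The alternative is to choose instead a weight with exponent $\alpha=-\sqrt N$ via the ground-state identity with $f=\omega^\alpha$, as Lemma~\ref{lem:regularization} anticipates.

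Concretely, for \eqref{eq:weightedHardy} we use the first-order ground-state identity with $f=\omega^{\alpha}$ and $V=\omega^\gamma$. This gives
\[
\int\omega^\gamma|\nabla\phi|^2\geq-\int\bigl[\alpha(\alpha+\gamma-1)\omega^{\gamma-2}|\nabla\omega|^2+\alpha\omega^{\gamma-1}\Delta\omega\bigr]|\phi|^2 .
\]
With $\alpha=-\sqrt N$, the term $-\alpha\omega^{\gamma-1}\Delta\omega$ is about $\sqrt N\cdot N/2\cdot\omega^{\gamma-1}$, which is of the wrong scale by itself. So we plan to interpolate: we split $\phi$ by the $\rho$-level, use the Hardy coefficient near the origin where $\omega$ is small, and use the Poincaré bound from the conjugation argument on the bulk. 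On the bulk, $\omega^{\gamma}\approx(N/2)\omega^{\gamma-1}$ turns \eqref{eq:weightedPoincare} into \eqref{eq:weightedHardy} with factor $qN/2$. The exact bookkeeping of the $N^{-1/2}$ errors is where we would spend most of the effort.
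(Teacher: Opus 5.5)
There is a genuine gap at the foundation. The inequality \eqref{eq:weightedPoincare} is never proved, and the other three inequalities all depend on it.

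After conjugating by $h=\rho^{\gamma/2}$ you correctly isolate the obstacle. The negative part $W=(-\Delta h/h)_+$ obeys only $W\le C_\gamma\bigl((\rho-1)_++N^{-1}\bigr)$, which is $O(1)$ pointwise. It multiplies $|v|^2$, where $v=h\phi$ is an arbitrary high-frequency function. So neither H\"older plus hypercontractivity nor Lemma~\ref{lem:multiplier} applies, since the lemma only controls $\operatorname{ran}P_{<q}$. But you stop there. The missing idea is entropy duality:
\[
 \int W|v|^2\,d\mu\leq\frac1s\operatorname{Ent}_\mu(|v|^2)
 +\frac1s\log\Bigl(\int e^{sW}\,d\mu\Bigr)\norm v_2^2,\qquad s>0.
\]
Take $s=\sqrt N$. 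Hoeffding's bound for the average $\rho-1=N^{-1}\sum_j(-\cos x_j)$ gives $\log\int e^{\sqrt N W}d\mu=O_\gamma(1)$. Gross's dimension-free logarithmic Sobolev inequality gives $\operatorname{Ent}_\mu(|v|^2)\leq C\norm{\nabla v}_2^2$. Hence $\int W|v|^2\,d\mu\leq C_\gamma N^{-1/2}(\norm{\nabla v}_2^2+\norm v_2^2)$. This is absorbed by the kinetic bound $\norm{\nabla v}_2^2\geq(q-CN^{-1})\norm v_2^2$, which you already have. It works for arbitrary $v$ because it is paid for with kinetic energy rather than with an $L^4$ norm.

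Your route to \eqref{eq:weightedHardy} also fails as stated. The ground-state inequality with $f=\omega^{-\sqrt N}$ holds for every smooth $\phi$, so by itself it cannot supply the $q$-dependence that \eqref{eq:weightedPoincare} provides. Splitting $\phi$ by $\rho$-levels destroys the condition $\phi\in\mathscr F_{q,N}$. And ``$\omega^\gamma\approx(N/2)\omega^{\gamma-1}$ on the bulk'' says nothing about where $|\phi|^2$ lives.

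The correct step is a linear absorption. Write $I_\eta=\int\omega^\eta|\nabla\phi|^2$ and $V_\eta=\int\omega^\eta|\phi|^2$. The identity gives $I_\gamma\geq-\alpha(\alpha+\gamma-1+N/2)V_{\gamma-1}+\alpha V_\gamma$. Since $\alpha<0$, insert $V_\gamma\leq\kappa^{-1}I_\gamma$, with $\kappa=q-O(N^{-1/2})$ from \eqref{eq:weightedPoincare}, and move it to the left. With $\alpha=-\sqrt N$ this gives $(1+\sqrt N/\kappa)I_\gamma\geq\sqrt N(N/2-\sqrt N+\gamma-1)V_{\gamma-1}$. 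The $\sqrt N$ overshoot you noticed is exactly cancelled by the factor $1+\sqrt N/\kappa$.

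Your Bochner reduction for \eqref{eq:weightedRellichPoincare}--\eqref{eq:weightedRellich} is a legitimate alternative to the paper's Young-inequality argument and second-order Rellich identity. However, the large term $-\gamma\int\omega^{\gamma-1}\Delta\omega|\nabla\phi|^2$ cannot be handled by bulk concentration and Lemma~\ref{lem:multiplier}, for the same reason as above. Integrate by parts instead:
\[
 \int\omega^{\gamma-1}\Delta\omega|\nabla\phi|^2=-\int\nabla(\omega^{\gamma-1}|\nabla\phi|^2)\cdot\nabla\omega .
\]
By $|\nabla\omega|^2\leq\omega$ this is at most $C_\gamma I_{\gamma-1}+2I_{\gamma-1}^{1/2}H^{1/2}$, where $H=\int\omega^\gamma|\nabla^2\phi|^2$. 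Applying \eqref{eq:weightedHardy} to each $\phi_j$ gives $I_{\gamma-1}\leq CN^{-1}H$. Hence $\int\omega^\gamma|\Delta\phi|^2\geq(1-CN^{-1/2})H$. Then \eqref{eq:weightedPoincare} and \eqref{eq:weightedHardy}, applied to the $\phi_j$, give both second-order bounds.
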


\begin{proof}
Set $\beta=\gamma/2$, $h=\rho^\beta$, and $v=h\phi$.  We first use
$h_\varepsilon$ from Lemma~\ref{lem:regularization} and then let
$\varepsilon\downarrow0$.  The resulting ground-state identity is
\[
 \int\rho^\gamma|\nabla\phi|^2d\mu
 =\int|\nabla v|^2d\mu+\int\frac{\Delta h}{h}|v|^2d\mu.
\]
Since
\[
 |\nabla\rho|^2\leq\frac{2\rho-\rho^2}{N},
 \qquad \Delta\rho=1-\rho,
\]
we have
\[
 \frac{\Delta h}{h}
 =\beta\frac{1-\rho}{\rho}
 +\beta(\beta-1)\frac{|\nabla\rho|^2}{\rho^2}.
\]
We now bound the negative part explicitly.  If $\rho\geq1$, the first term has
negative part at most $\beta(\rho-1)$; the second term is non-negative when
$\beta\geq1$, while for $0<\beta<1$ its absolute value is at most
$2\beta(1-\beta)/(N\rho)\leq C_\gamma/N$.  If $0<\rho<1$ and
$\beta\geq1$, both terms are non-negative.  If $0<\beta<1$, then
\[
 \frac{\Delta h}{h}
 \geq\frac{\beta}{\rho}
 \left[(1-\rho)-\frac{(1-\beta)(2-\rho)}N\right].
\]
The right-hand side can be negative only if $1-\rho\leq2/N$.  For $N\geq4$
this implies $\rho\geq1/2$, and the magnitude of the negative part is at most
$C_\gamma/N$.  We have therefore proved
\begin{equation}\label{eq:Wpointwise}
 W=(-\Delta h/h)_+
 \leq C_\gamma\bigl((\rho-1)_++N^{-1}\bigr).
\end{equation}
For a non-negative integrable function $F$, we use the normalized entropy
\[
 \operatorname{Ent}_\mu(F)
 =\int F\log\!\left(\frac{F}{\int F\,d\mu}\right)d\mu.
\]
Apply the entropy variational inequality
\cite[Chapter~4]{BoucheronLugosiMassart} with parameter $\sqrt N$,
Hoeffding's exponential-moment bound \cite{Hoeffding1963} for
$\sqrt N(\rho-1)$, and the dimension-free logarithmic Sobolev inequality.
More explicitly, for $s=\sqrt N$,
\[
 \int W|v|^2d\mu
 \leq\frac1s\operatorname{Ent}_\mu(|v|^2)
 +\frac1s\log\left(\int e^{sW}d\mu\right)\norm v_2^2.
\]
Equation \eqref{eq:Wpointwise} and
$e^{a x_+}\leq1+e^{ax}$ imply
\[
 \int e^{sW}d\mu
 \leq e^{C_\gamma s/N}
 \left(1+\int e^{C_\gamma s(\rho-1)}d\mu\right).
\]
Since $\rho-1=N^{-1}\sum_{j=1}^N(-\cos x_j)$ is the average of independent,
centered random variables in $[-1,1]$, Hoeffding's exponential-moment estimate
with $s=\sqrt N$ bounds the last integral by
$\exp(C_\gamma^2s^2/(2N))$.  Thus the logarithm is $O_\gamma(1)$, uniformly in
$N$.  Gross's inequality bounds the entropy by
$C_{\mathrm{LS}}\int|\nabla v|^2d\mu$.  Consequently
\begin{equation}\label{eq:Wform}
 \int W|v|^2d\mu
 \leq C_\gamma N^{-1/2}
 \left(\int|\nabla v|^2d\mu+\int|v|^2d\mu\right).
\end{equation}
Lemma~\ref{lem:multiplier} and the unweighted spectral theorem give
\[
 \int|\nabla v|^2d\mu
 \geq q\bigl(\norm v_2^2-\norm{P_{<q}v}_2^2\bigr)
 \geq(q-C_{q,\gamma}N^{-1})\norm v_2^2.
\]
Combining this with \eqref{eq:Wform} proves
\eqref{eq:weightedPoincare} with an $O(N^{-1/2})$ error.  More explicitly,
the ground-state identity and \eqref{eq:Wform} imply
\[
 \int\rho^\gamma|\nabla\phi|^2d\mu
 \geq(1-C_\gamma N^{-1/2})\norm{\nabla v}_2^2
 -C_\gamma N^{-1/2}\norm v_2^2.
\]
The preceding spectral bound for $v$ shows that there is a constant
$B_{q,\gamma}>0$, independent of $N$, for which the right-hand side is at
least
$(q-B_{q,\gamma}N^{-1/2})\norm v_2^2$.  Choose
$A_{q,\gamma}^{(1)}\geq B_{q,\gamma}$ and set
\[
 \kappa_{q,\gamma,N}=q-A_{q,\gamma}^{(1)}N^{-1/2}.
\]
For $N\geq N_{q,\gamma}^{(1)}$, this number is positive and the estimate just
proved reads
\[
 \int|\nabla\phi|^2\rho^\gamma d\mu
 \geq\kappa_{q,\gamma,N}
 \int|\phi|^2\rho^\gamma d\mu.
\]
Since
$\norm v_2^2=\int\rho^\gamma|\phi|^2d\mu$, multiplying both sides by
$(N/2)^\gamma$ gives \eqref{eq:weightedPoincare} with the weight
$\omega^\gamma$.

For the Hardy estimate, apply Lemma~\ref{lem:regularization} with
$V=\omega^\gamma$ and $f=\omega^\alpha$.  The relations
$|\nabla\omega|^2\leq\omega$ and $\Delta\omega=N/2-\omega$ imply, for
$\alpha<0$ with $\alpha(\alpha+\gamma-1)\geq0$,
\begin{equation}\label{eq:Hardyalpha}
 I_\gamma\geq
 -\alpha(\alpha+\gamma-1+N/2)V_{\gamma-1}+\alpha V_\gamma,
\end{equation}
where
\[
 I_\eta=\int|\nabla\phi|^2\omega^\eta d\mu,
 \qquad V_\eta=\int|\phi|^2\omega^\eta d\mu.
\]
Indeed,
\[
 \frac{\operatorname{div}(\omega^\gamma\nabla\omega^\alpha)}
 {\omega^\alpha}
 =\alpha(\alpha+\gamma-1)\omega^{\gamma-2}|\nabla\omega|^2
 +\alpha\omega^{\gamma-1}\Delta\omega.
\]
The ground-state identity has the negative of this expression as its
coefficient.  The sign condition on
$\alpha(\alpha+\gamma-1)$ permits the use of
$|\nabla\omega|^2\leq\omega$, and substitution of
$\Delta\omega=N/2-\omega$ gives \eqref{eq:Hardyalpha}.

By the quantitative form of \eqref{eq:weightedPoincare},
\[
 V_\gamma\leq\kappa_{q,\gamma,N}^{-1}I_\gamma.
\]
Because $\alpha<0$, insertion
in \eqref{eq:Hardyalpha} reverses the inequality when
$V_\gamma\leq\kappa_{q,\gamma,N}^{-1}I_\gamma$ is multiplied by $\alpha$.
Consequently,
\[
 \left(1-\frac{\alpha}{\kappa_{q,\gamma,N}}\right)I_\gamma
 \geq-\alpha(\alpha+\gamma-1+N/2)V_{\gamma-1}.
\]
For $N$ large, the choice $\alpha=-\sqrt N$ satisfies the required sign
condition and gives
\[
 \frac{-\alpha(\alpha+\gamma-1+N/2)}
 {1-\alpha/\kappa_{q,\gamma,N}}
 =\frac{qN}{2}(1-O_{q,\gamma}(N^{-1/2})),
\]
which proves \eqref{eq:weightedHardy}.

For the second-order spectral estimate, integration by parts and Young's
inequality with a free parameter $a>0$ give
\[
 I_\gamma\leq \frac12\int\Delta(\omega^\gamma)|\phi|^2d\mu
 +\frac a2V_\gamma+\frac1{2a}J_\gamma,
\]
where $J_\gamma=\int|\Delta\phi|^2\omega^\gamma d\mu$.  To estimate the
first term, note that the preceding inequality follows from
\[
 I_\gamma
 =-\operatorname{Re}\int\omega^\gamma\overline\phi\,\Delta\phi\,d\mu
 +\frac12\int\Delta(\omega^\gamma)|\phi|^2d\mu
\]
and
$2|\phi\Delta\phi|\leq a|\phi|^2+a^{-1}|\Delta\phi|^2$.
To estimate the
weight term, integrate by parts:
\begin{align*}
 \int\omega^{\gamma-1}\Delta\omega|\phi|^2d\mu
 &=-\int\nabla(\omega^{\gamma-1}|\phi|^2)\cdot\nabla\omega\,d\mu\\
 &\leq C_\gamma V_{\gamma-1}
 +2V_{\gamma-1}^{1/2}I_\gamma^{1/2}.
\end{align*}
Equation \eqref{eq:weightedHardy} gives
$V_{\gamma-1}\leq C_{q,\gamma}N^{-1}I_\gamma$.  Since
$|\nabla\omega|^2\leq\omega$, the remaining term in
$\Delta(\omega^\gamma)$ obeys the same bound, and therefore
\begin{equation}\label{eq:Deltawbound}
 \frac12\int\Delta(\omega^\gamma)|\phi|^2d\mu
 \leq C_{q,\gamma}N^{-1/2}I_\gamma.
\end{equation}
Use $V_\gamma\leq\kappa_{q,\gamma,N}^{-1}I_\gamma$ and choose
$a=\kappa_{q,\gamma,N}$.
The two Young terms then combine to give, for a fixed
$A_{q,\gamma}^{(2)}>0$ and all sufficiently large $N$,
\[
 \bigl(1-A_{q,\gamma}^{(2)}N^{-1/2}\bigr)I_\gamma
 \leq\frac{1}{\kappa_{q,\gamma,N}}J_\gamma.
\]
Define
\[
 \widetilde\kappa_{q,\gamma,N}
 =\kappa_{q,\gamma,N}
  \bigl(1-A_{q,\gamma}^{(2)}N^{-1/2}\bigr).
\]
After enlarging the lower threshold for $N$, this number is positive and
\[
 \widetilde\kappa_{q,\gamma,N}
 \geq q-A_{q,\gamma}^{(3)}N^{-1/2},
 \qquad
 J_\gamma\geq\widetilde\kappa_{q,\gamma,N}I_\gamma
\]
for a constant $A_{q,\gamma}^{(3)}>0$.  This proves
\eqref{eq:weightedRellichPoincare}.

For the final Rellich estimate we use the following periodic identity, obtained
by expanding the square and integrating twice by parts:
\begin{align*}
 \int V|\Delta\phi|^2d\mu
={}&-\int\left(\frac{\operatorname{div}(V\nabla f)}f+\Delta V\right)
 |\nabla\phi|^2d\mu
 +\operatorname{Re}\sum_{i,j}\int V_{ij}\phi_i\overline{\phi_j}\,d\mu\\
 &+\int V\left|\nabla^2\phi-
 \frac{\nabla f}{f}\otimes\nabla\phi\right|^2d\mu.
\end{align*}
Take $V=\omega^\gamma$ and $f=\omega^\alpha$, justified by
Lemma~\ref{lem:regularization}.  The last integral is non-negative.  Moreover,
with $\omega_i=\frac12\sin x_i$ and
$\omega_{ij}=\frac12\cos x_i\,\delta_{ij}$,
\begin{align*}
 \operatorname{Re}\sum_{i,j}\int V_{ij}\phi_i\overline{\phi_j}\,d\mu
={}&\gamma(\gamma-1)\int\omega^{\gamma-2}
 \left|\sum_i\omega_i\phi_i\right|^2d\mu\\
 &+\frac\gamma2\sum_i\int\omega^{\gamma-1}
 \cos x_i\,|\phi_i|^2d\mu\\
\geq{}&-\frac\gamma2 I_{\gamma-1}.
\end{align*}
If $\alpha+\gamma<0$, then
$\alpha(\alpha+\gamma-1)+\gamma(\gamma-1)\geq0$.  Direct differentiation,
$\Delta\omega=N/2-\omega$, gives the exact coefficient
\begin{align*}
 \frac{\operatorname{div}(V\nabla f)}f+\Delta V
={}&\bigl[\alpha(\alpha+\gamma-1)+\gamma(\gamma-1)\bigr]
 \omega^{\gamma-2}|\nabla\omega|^2\\
 &+(\alpha+\gamma)\omega^{\gamma-1}(N/2-\omega).
\end{align*}
The coefficient in square brackets is non-negative under the stated
condition.  Using $|\nabla\omega|^2\leq\omega$ therefore gives
\begin{align*}
 -\left(\frac{\operatorname{div}(V\nabla f)}f+\Delta V\right)
 \geq{}&-\bigl[\alpha^2
 +(\alpha+\gamma)(\gamma-1+N/2)\bigr]\omega^{\gamma-1}\\
 &+(\alpha+\gamma)\omega^\gamma.
\end{align*}
Consequently,
\begin{equation}\label{eq:Rellichalpha}
 J_\gamma\geq-G_{N,\gamma}(\alpha)I_{\gamma-1}
 +(\alpha+\gamma)I_\gamma,
\end{equation}
where
\begin{equation}\label{eq:Gpoly}
 G_{N,\gamma}(\alpha)
 =\alpha^2+(\alpha+\gamma)(\gamma-1+N/2)
 +\frac\gamma2.
\end{equation}
Because $\alpha+\gamma<0$, the quantitative form of
\eqref{eq:weightedRellichPoincare} gives
$ (\alpha+\gamma)I_\gamma
 \geq(\alpha+\gamma)\widetilde\kappa_{q,\gamma,N}^{-1}J_\gamma$.
For $N$ sufficiently large, $\alpha=-\sqrt N$ satisfies
$\alpha+\gamma<0$ and
$\alpha(\alpha+\gamma-1)+\gamma(\gamma-1)\geq0$.  Substituting the preceding bound in
\eqref{eq:Rellichalpha} and moving the resulting multiple of $J_\gamma$ to
the left gives
\[
 \left(1-\frac{\alpha+\gamma}
 {\widetilde\kappa_{q,\gamma,N}}\right)J_\gamma
 \geq-G_{N,\gamma}(\alpha)I_{\gamma-1}.
\]
For $\alpha=-\sqrt N$, the two coefficients satisfy
\begin{align*}
 -G_{N,\gamma}(-\sqrt N)
 &=\tfrac12N^{3/2}+O_\gamma(N),\\
 1-\frac{-\sqrt N+\gamma}{\widetilde\kappa_{q,\gamma,N}}
 &=\frac{\sqrt N}{q}\bigl(1+O_{q,\gamma}(N^{-1/2})\bigr),
\end{align*}
and therefore
\[
 -\frac{G_{N,\gamma}(-\sqrt N)}
 {1-(-\sqrt N+\gamma)/\widetilde\kappa_{q,\gamma,N}}
 =\frac{qN}{2}\bigl(1+O_{q,\gamma}(N^{-1/2})\bigr).
\]
The two quotient expansions used for \eqref{eq:weightedHardy} and
\eqref{eq:weightedRellich} have errors bounded, respectively, by
$A_{q,\gamma}^{(4)}N^{-1/2}$ and
$A_{q,\gamma}^{(5)}N^{-1/2}$ once $N$ exceeds fixed thresholds.  Define
\[
 A_{q,\gamma}
 =\max_{1\leq j\leq5}A_{q,\gamma}^{(j)}
\]
and let $N_{q,\gamma}$ be the maximum of the finitely many lower thresholds
introduced in the proof, including
$N^{\mathrm{mult}}_{\gamma/2,q}$ from Lemma~\ref{lem:multiplier} and those
required by the sign conditions on $\alpha=-\sqrt N$.  Then the errors in all four inequalities
\eqref{eq:weightedPoincare}--\eqref{eq:weightedRellich} are bounded in
absolute value by $A_{q,\gamma}N^{-1/2}$.  This proves the full quantitative
statement of the proposition.
\end{proof}

Iterating the two weighted estimates gives the fixed Fourier gap required by
the operator argument.

\begin{proposition}\label{prop:fouriergap}
For every fixed $\ell,q\in\N$, there are constants
$C_{\ell,q}>0$ and $N_{\ell,q}$ and numbers
\[
 0\leq\varepsilon_{\ell,q}(N)
 \leq C_{\ell,q}N^{-1/2},\qquad N\geq N_{\ell,q},
\]
such that
\begin{equation}\label{eq:fouriergap}
 4^\ell\int_{\T^N}|D^{2\ell}\Psi|^2\omega^\ell
 \geq
 (2q)^\ell N^\ell(1-\varepsilon_{\ell,q}(N))
 \int_{\T^N}|D^\ell\Psi|^2
\end{equation}
for every smooth $\Psi\in\mathscr F_{q,N}$.
\end{proposition}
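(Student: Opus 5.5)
The plan is to iterate the two "dimension-lowering" estimates \eqref{eq:weightedHardy} and \eqref{eq:weightedRellich} of Proposition~\ref{prop:weightedgap}, $\ell$ times in all. Each application removes one derivative and one power of $\omega$ and gains a factor $\tfrac{qN}{2}(1-O(N^{-1/2}))$. For $0\le k\le\ell$ put
\[
 Q_k=\int_{\T^N}|D^{2\ell-k}\Psi|^2\,\omega^{\ell-k}\,d\mu .
\]
Then $Q_0$ is the left integral in \eqref{eq:fouriergap} and $Q_\ell=\int|D^\ell\Psi|^2$ is the right integral. The goal is the one-step inequality
\[
 Q_k\geq\frac{qN}{2}\bigl(1-A_{q,\ell-k}N^{-1/2}\bigr)Q_{k+1},\qquad 0\le k\le\ell-1 .
\]
Chaining these steps gives $4^\ell Q_0\geq 4^\ell(qN/2)^\ell\prod_k(1-A_{q,\ell-k}N^{-1/2})\,Q_\ell=(2q)^\ell N^\ell(1-\varepsilon_{\ell,q}(N))Q_\ell$. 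Here
\[
 \varepsilon_{\ell,q}(N)=1-\prod_{k=0}^{\ell-1}\bigl(1-A_{q,\ell-k}N^{-1/2}\bigr),
\]
which lies in $[0,\,\ell\max_\gamma A_{q,\gamma}N^{-1/2}]$ once $N\geq N_{\ell,q}:=\max_{1\le\gamma\le\ell}N_{q,\gamma}$ and each factor is in $[0,1]$. We may set $C_{\ell,q}=\ell\max_{\gamma\le\ell}A_{q,\gamma}$.

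The one-step inequality splits by the parity of $2\ell-k$, using $\gamma=\ell-k\in\{1,\dots,\ell\}$, so that $\gamma\in\N$ as Proposition~\ref{prop:weightedgap} requires.
\begin{itemize}
\item If $2\ell-k=2m$ is even, then $D^{2\ell-k}\Psi=\Delta^m\Psi=\Delta\phi$ and $D^{2\ell-k-1}\Psi=\nabla\Delta^{m-1}\Psi=\nabla\phi$ with $\phi=\Delta^{m-1}\Psi$. In this case \eqref{eq:weightedRellich} is exactly the required step.
\item If $2\ell-k=2m+1$ is odd, then $D^{2\ell-k}\Psi=\nabla\phi$ and $D^{2\ell-k-1}\Psi=\phi$ with $\phi=\Delta^m\Psi$. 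In this case \eqref{eq:weightedHardy} is the required step.
\end{itemize}
In both cases $\phi$ is a scalar trigonometric polynomial, hence smooth. Moreover $\phi\in\mathscr F_{q,N}$, because $\Delta$ multiplies each Fourier coefficient by $-|n|^2$ and so cannot create modes with $|n|^2<q$. The hypotheses of Proposition~\ref{prop:weightedgap} therefore hold uniformly in $\Psi$.

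I expect no genuine obstacle, since the analytic work is already contained in Proposition~\ref{prop:weightedgap}. The points to check carefully are bookkeeping.
\begin{itemize}
\item The parity conventions for $D^k$ must match at every step, with the vector-valued $|\nabla\phi|^2=\sum_j|\partial_j\phi|^2$ exactly as in that proposition.
\item The last step has $\gamma=1$, producing the unweighted integral with $\omega^0$. This is covered, since the proposition allows $\gamma=1$ and Lemma~\ref{lem:regularization} requires only $\gamma\geq1$.
\item The constants and thresholds must be taken as maxima over the finitely many $\gamma\le\ell$, so that they depend only on $\ell$ and $q$.
\end{itemize}
Finally, the normalization $4^\ell(qN/2)^\ell=(2qN)^\ell$ must be confirmed.
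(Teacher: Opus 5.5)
Your proposal is correct and is essentially the paper's proof. The paper sets $X_r=\int|D^r\Psi|^2\omega^{r-\ell}\,d\mu$ for $\ell\le r\le 2\ell$, so your $Q_k$ is $X_{2\ell-k}$. It then applies \eqref{eq:weightedHardy} with $\phi=\Delta^{(r-1)/2}\Psi$ for odd $r$, and \eqref{eq:weightedRellich} with $\phi=\Delta^{(r-2)/2}\Psi$ for even $r$, both with $\gamma=r-\ell$, and multiplies the $\ell$ resulting factors. Your explicit $\varepsilon_{\ell,q}(N)$ and your choice of constants as maxima over $\gamma\le\ell$ just spell out bookkeeping the paper leaves implicit. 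One nitpick: for merely smooth $\Psi$, $\phi$ is smooth rather than a trigonometric polynomial, but smoothness is all you need.
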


\begin{proof}
For $\ell\leq r\leq2\ell$, set
\[
 X_r=\int_{\T^N}|D^r\Psi|^2\omega^{r-\ell}d\mu.
\]
Every derivative of $\Psi$ belongs to $\mathscr F_{q,N}$, since
differentiation multiplies Fourier coefficients and cannot create new
frequencies.  We claim that, for $r=\ell+1,\ldots,2\ell$,
\begin{equation}\label{eq:Xstep}
 X_r\geq\frac{qN}{2}
 (1-O_{\ell,q}(N^{-1/2}))X_{r-1}.
\end{equation}

If $r$ is odd, take
$\phi=\Delta^{(r-1)/2}\Psi$ in \eqref{eq:weightedHardy} with
$\gamma=r-\ell$.  Then $|\nabla\phi|^2=|D^r\Psi|^2$ and
$|\phi|^2=|D^{r-1}\Psi|^2$, so the resulting inequality is precisely
\eqref{eq:Xstep}.  If $r$ is even, take
$\phi=\Delta^{(r-2)/2}\Psi$ in \eqref{eq:weightedRellich}, again with
$\gamma=r-\ell$.  In this case
$|\Delta\phi|^2=|D^r\Psi|^2$ and
$|\nabla\phi|^2=|D^{r-1}\Psi|^2$, and the same inequality follows.

Multiplying \eqref{eq:Xstep} for
$r=2\ell,2\ell-1,\ldots,\ell+1$ gives exactly $\ell$ factors.  Because
$\ell$ is fixed, the product of their relative errors is still
$1-O_{\ell,q}(N^{-1/2})$.  Therefore
\[
 \int|D^{2\ell}\Psi|^2\omega^\ell d\mu
 \geq\left(\frac{qN}{2}\right)^\ell
 (1-O_{\ell,q}(N^{-1/2}))
 \int|D^\ell\Psi|^2d\mu.
\]
Multiplication by $4^\ell$ proves \eqref{eq:fouriergap}, with
$\varepsilon_{\ell,q}(N)=C_{\ell,q}N^{-1/2}$ after increasing the constant
and the lower threshold if necessary.
\end{proof}

The case $q=2$ translates the Fourier gap into a coercive bound off the first
lattice shell.

\begin{proposition}\label{prop:confinement}
For the projection $Q_1=I-P_1$ onto functions vanishing on $S_1$,
\begin{equation}\label{eq:confinement}
 Q_1K_{\ell,N}Q_1
 \geq4^\ell N^\ell(1-O_\ell(N^{-1/2}))Q_1.
\end{equation}
In particular, the complement of the first shell is separated from
$(2N)^\ell=2^\ell N^\ell$ by a gap of order $N^\ell$.
\end{proposition}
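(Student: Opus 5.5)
Proposition~\ref{prop:fouriergap} with $q=2$, transported back to the lattice through the Fourier dictionary \eqref{eq:fourierden}--\eqref{eq:fouriernum}, gives the form inequality on the form core; a closure argument then extends it to the operator $Q_1K_{\ell,N}Q_1$ defined by the compressed form.

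\emph{Step 1 (core).} Let $v\in C_c(\Z^N\setminus\{0\})$ with $Q_1v=v$, so $v$ vanishes on $S_1$. Put $u=R^\ell v$. Then $u\in C_c(\Z^N)$, $u(0)=0$, and $u$ vanishes on $S_1$. Define the trigonometric polynomial $\Psi$ by $\widehat\Psi(0)=0$ and $\widehat\Psi(n)=u(n)/|n|^{2\ell}$. Its Fourier support avoids $|n|^2\in\{0,1\}$, so $\Psi\in\mathscr F_{2,N}$. By \eqref{eq:fourierden},
\[
\norm v_2^2=\sum_{n\ne0}\frac{|u(n)|^2}{|n|^{2\ell}}=\int|D^\ell\Psi|^2 .
\]
By \eqref{eq:fouriernum},
\[
\mathfrak k_{\ell,N}[v]=\ip{u}{L_N^\ell u}=4^\ell\int|D^{2\ell}\Psi|^2\omega^\ell .
\]
Apply \eqref{eq:fouriergap} with $q=2$:
\[
\mathfrak k_{\ell,N}[v]\geq(4N)^\ell\bigl(1-\varepsilon_{\ell,2}(N)\bigr)\norm v_2^2,
\qquad \varepsilon_{\ell,2}(N)\leq C_{\ell,2}N^{-1/2}.
\]
Since $(4N)^\ell=4^\ell N^\ell$, this is \eqref{eq:confinement} on the finitely supported vectors of $Q_1\cH_{0,N}$.

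\emph{Step 2 (closure).} The core $C_c(\Z^N\setminus\{0\})$ is a form core. By Section~2, $Q_1$ preserves $\mathcal Q_{\ell,N}$, and $Q_1$ maps the core into itself, since it only zeroes out finitely many entries. Take $v\in Q_1\mathcal Q_{\ell,N}$ and choose core vectors $v_j\to v$ in form norm. Then $Q_1v_j$ are core vectors in $\operatorname{ran}Q_1$, and $Q_1v_j\to Q_1v=v$ in form norm, because $P_1v_j\to P_1v$ in form norm by the finite-rank argument of Section~2. Both sides of the inequality are continuous in the form norm, so it passes to the limit. This yields the lower bound for the closed compressed form, and hence, by the first representation theorem, for the operator $Q_1K_{\ell,N}Q_1$ it represents. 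This is exactly the form used in Lemma~\ref{lem:schur}.

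\emph{Gap statement.} The "in particular" follows from
\[
4^\ell N^\ell\bigl(1-O(N^{-1/2})\bigr)-2^\ell N^\ell=(2^\ell-1)2^\ell N^\ell-O(N^{\ell-1/2}),
\]
which is of order $N^\ell$ for large $N$.

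The only point that needs care is the bookkeeping linking $Q_1$ on the $v$-side to the Fourier condition $\Psi\in\mathscr F_{2,N}$. Since $R^\ell$ is diagonal and nonvanishing off the origin, $v=0$ on $S_1$ is equivalent to $u=0$ on $S_1$, and that is equivalent to $\widehat\Psi$ vanishing on the shell $|n|^2=1$. All the analysis is already contained in Proposition~\ref{prop:fouriergap}, so no real obstacle remains.
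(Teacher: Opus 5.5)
Your proposal is correct and follows the paper's proof essentially step for step. You pass from $v$ to $u=R^\ell v$ and then to $\Psi\in\mathscr F_{2,N}$, apply Proposition~\ref{prop:fouriergap} with $q=2$ through \eqref{eq:fourierden}--\eqref{eq:fouriernum}, and extend by density and closure; you simply make the closure step more explicit, using the form-norm continuity of $P_1$ from Section~2, and check the gap arithmetic $4^\ell-2^\ell=2^\ell(2^\ell-1)$.
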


\begin{proof}
Let $v\in C_c(\Z^N\setminus\{0\})$ satisfy $P_1v=0$ and put
$u=R^\ell v$.  The support condition is unchanged by $R^\ell$, so $u$ also
vanishes at the origin and on $S_1$.  In the Fourier representation
\eqref{eq:fourierden}--\eqref{eq:fouriernum}, the associated function $\Psi$
has zero mean and has no Fourier coefficient with $|n|^2=1$.  Hence
$\Psi\in\mathscr F_{2,N}$.

Proposition~\ref{prop:fouriergap} with $q=2$ gives
\begin{align*}
 \mathfrak k_{\ell,N}[v]
 &=4^\ell\int|D^{2\ell}\Psi|^2\omega^\ell d\mu\\
 &\geq4^\ell N^\ell(1-O_\ell(N^{-1/2}))
 \int|D^\ell\Psi|^2d\mu.
\end{align*}
By \eqref{eq:fourierden}, the last integral equals
$\sum_{n\ne0}|u(n)|^2/|n|^{2\ell}=\norm v_2^2$.  This proves the quadratic
form inequality on the form core.  Closure of the form and density extend it
to the full compressed form domain, proving \eqref{eq:confinement}.  Since
$4^\ell>2^\ell$ for every $\ell\geq1$, its distance from
$\Lambda_N=2^\ell N^\ell$ is a positive multiple of $N^\ell$ for large $N$.
\end{proof}

\section{Residual estimate and closure}

Let $w_{\ell,N}$ be a normalized eigenvector of the four-orbit compression
$M_{\ell,N}$ for $\mu_{\ell,N}$, embedded in the full invariant sector.

The orbit-path distance also controls the defect created by this embedding.

\begin{lemma}\label{lem:residual}
For every fixed $\ell\geq1$,
\begin{equation}\label{eq:residual}
 \norm{(K_{\ell,N}-\mu_{\ell,N})w_{\ell,N}}
 =O_\ell(N^{\ell-3/2}).
\end{equation}
\end{lemma}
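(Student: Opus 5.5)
Let $P_4$ denote the orthogonal projection onto the span of the four normalized orbit vectors $\psi_{(1)},\psi_{(1,1)},\psi_{(1,1,1)},\psi_{(2)}$. Because $w_{\ell,N}$ is an eigenvector of the compression $M_{\ell,N}=P_4K_{\ell,N}P_4$, we have $P_4(K_{\ell,N}-\mu_{\ell,N})w_{\ell,N}=0$. The residual is therefore
\[
 (K_{\ell,N}-\mu_{\ell,N})w_{\ell,N}=(I-P_4)K_{\ell,N}w_{\ell,N}.
\]
The vector $w_{\ell,N}$ is finitely supported and $\BN$-invariant, so it lies in $\operatorname{Dom}(K_{\ell,N})$, and the formal matrix may be used. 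The operator $K_{\ell,N}$ commutes with $\BN$ and has propagation at most $\ell$. Hence $K_{\ell,N}w_{\ell,N}$ lies in the span of the orbit vectors $\psi_\nu$ with $\nu$ reachable from one of the four orbits in at most $\ell$ steps. The origin $\varnothing$ never occurs, because the endpoint factor $R^\ell$ vanishes there. This is a finite set of partitions, independent of $N$. Writing $w_{\ell,N}=\sum_\lambda w_\lambda\psi_\lambda$ over the four orbits $\lambda$, I will bound
\[
 \norm{(K_{\ell,N}-\mu_{\ell,N})w_{\ell,N}}^2
 =\sum_{\nu\notin\{(1),(1,1),(1,1,1),(2)\}}
 \Bigl|\sum_\lambda w_\lambda\ip{K_{\ell,N}\psi_\lambda}{\psi_\nu}\Bigr|^2 .
\]
It therefore suffices to show that each term $w_\lambda\ip{K_{\ell,N}\psi_\lambda}{\psi_\nu}$ is $O_\ell(N^{\ell-3/2})$.

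The first ingredient is the size of the eigenvector components. From the perturbative description in the proof of Proposition~\ref{prop:delta}, after normalization the components satisfy
\[
 w_{(1)}=O(1),\qquad w_{(1,1)}=O(N^{-1/2}),\qquad w_{(1,1,1)},\,w_{(2)}=O(N^{-1}).
\]
Here I use that the simple eigenvalue branch is analytic in $t=N^{-1/2}$, with the stated leading powers $b_1t$, $e_2t^2$, $d_2t^2$. In each case this is $w_\lambda=O(N^{-d_*((1),\lambda)})$, since $d_*$ of \eqref{eq:weighted-distance} equals $0,\tfrac12,1,1$ on the four orbits respectively.

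The second ingredient is \eqref{eq:Kpathbound} from Lemma~\ref{lem:pathbound}, which gives $|\ip{K_{\ell,N}\psi_\lambda}{\psi_\nu}|\le C_\ell N^{\ell-d_*(\lambda,\nu)}$. Concatenating paths shows that $d_*$ is subadditive, so each term is bounded by
\[
 C_\ell N^{\ell-d_*((1),\lambda)-d_*(\lambda,\nu)}\le C_\ell N^{\ell-d_*((1),\nu)}.
\]

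It remains to verify the combinatorial claim $d_*((1),\nu)\ge 3/2$ for every partition $\nu\neq\varnothing$ outside the four selected orbits; this is the step I expect to be the main obstacle. Every path from $(1)$ to $\nu$ must either add parts equal to $1$ or perform an internal edge. Dimensional edges cost $\tfrac12$ each, internal edges cost $1$, and backtracking only adds cost.
\begin{itemize}[label=\textbullet]
\item A path with no internal edge must reach a partition $(1^k)$ outside the selected set. This forces $k\ge4$, so the cost is at least $3/2$, with $(1,1,1,1)$ attaining exactly $3/2$.
\item A path with exactly one internal edge costs at least $1$. Its endpoint is $(2)$ only if no dimensional edge is used. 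Any other endpoint, such as $(2,1)$ or anything further, requires an additional dimensional edge, giving cost at least $3/2$.
\item Two or more internal edges, as needed for $(3)$ or $(2,2)$, cost at least $2$.
\end{itemize}
I would write this as a short case analysis on the number of internal edges.

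Since only finitely many $\nu$ and $\lambda$ occur, summing the squares yields \eqref{eq:residual}. The constants are uniform because the reachable orbit graph is fixed, as in the proof of Lemma~\ref{lem:pathbound}.
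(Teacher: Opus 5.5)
Your proposal is correct and follows the paper's proof: the same component bounds $N^{-\tau(\lambda)}$ drawn from the perturbative eigenvector, the path bound \eqref{eq:Kpathbound}, and a sum of squares over the finitely many orbits reachable in at most $\ell$ steps. The only difference is that you identify $\tau(\lambda)=d_*((1),\lambda)$ and use the triangle inequality for $d_*$, so the cost claim reduces to paths starting at $(1)$; this replaces the paper's orbit-by-orbit boundary check \eqref{eq:costclaim} with a single verification that $\{\nu: d_*((1),\nu)\le 1\}=\{(1),(1,1),(1,1,1),(2),\varnothing\}$.
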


\begin{proof}
The analytic perturbation calculation in Proposition~\ref{prop:delta} gives,
after normalizing the $(1)$ component to one, the component bounds
\[
 1,\qquad O_\ell(N^{-1/2}),\qquad
 O_\ell(N^{-1}),\qquad O_\ell(N^{-1})
\]
on $(1)$, $(1,1)$, $(1,1,1)$, and $(2)$, respectively.  Denote the
corresponding decay exponents by
\[
 \tau(1)=0,\qquad \tau(1,1)=\frac12,
 \qquad \tau(1,1,1)=\tau(2)=1.
\]
Let $\mathcal S=\{(1),(1,1),(1,1,1),(2)\}$.
The norm of this four-component vector is $1+O_\ell(N^{-1})$; hence passing
to the normalized eigenvector changes none of these orders.  We may therefore
write
\[
 w_{\ell,N}=\sum_{\lambda\in\mathcal S}a_{\lambda,N}\psi_\lambda,
 \qquad |a_{\lambda,N}|\leq C_\ell N^{-\tau(\lambda)}.
\]

By Lemma~\ref{lem:adjacency}, the boundary of $\mathcal S$ in the non-zero
orbit graph is
\[
 \partial\mathcal S
 =\{(2,1),(1,1,1,1),(3),(2,1,1)\}.
\]
The last orbit is reached from $(1,1,1)$ by increasing one part from $1$ to
$2$.  The empty partition represents the origin and gives no endpoint
contribution because of the factor $R^\ell$; it is therefore excluded from the
following endpoint statement.  Since every dimensional edge has cost $1/2$
and every internal edge has cost $1$, any path from $\mathcal S$ to a non-zero
omitted orbit meets $\partial\mathcal S$ before taking a further edge.  We
claim that
\begin{equation}\label{eq:costclaim}
 d_*(\lambda,\mu)+\tau(\lambda)\geq\frac32
 \qquad
 \bigl(\lambda\in\mathcal S,\quad
 \mu\notin\mathcal S\cup\{\varnothing\}\bigr).
\end{equation}
Indeed, starting from $(1)$, the boundary costs to $(2,1)$,
$(1,1,1,1)$, $(3)$, and $(2,1,1)$ are respectively
$3/2$, $3/2$, $2$, and $2$.  Starting from $(1,1)$, the smallest boundary
cost is $1$, attained at $(2,1)$ and at $(1,1,1,1)$; adding
$\tau(1,1)=1/2$ gives $3/2$.  From $(1,1,1)$ the smallest boundary cost is
$1/2$, attained at $(1,1,1,1)$, and
$\tau(1,1,1)=1$.  From $(2)$ the smallest boundary cost is $1/2$, attained
at $(2,1)$, and $\tau(2)=1$.  Every path to an orbit beyond the boundary has
an additional edge of positive cost.  This proves \eqref{eq:costclaim}.

For a non-zero omitted orbit $\mu$, Lemma~\ref{lem:pathbound} and
\eqref{eq:costclaim} give
\begin{align*}
 \left|\ip{(K_{\ell,N}-\mu_{\ell,N})w_{\ell,N}}{\psi_\mu}\right|
 &=\left|\sum_{\lambda\in\mathcal S}a_{\lambda,N}
 \ip{K_{\ell,N}\psi_\lambda}{\psi_\mu}\right|\\
 &\leq C_\ell\sum_{\lambda\in\mathcal S}
 N^{-\tau(\lambda)}N^{\ell-d_*(\lambda,\mu)}
 \leq C_\ell N^{\ell-3/2}.
\end{align*}
There is no term involving $\mu_{\ell,N}w_{\ell,N}$ in this coefficient,
because $w_{\ell,N}$ has no component on an omitted orbit.  On the selected
four-dimensional space the residual vanishes, since $w_{\ell,N}$ is an exact
eigenvector of the compression.

Finally, $L_N^\ell$ has propagation at most $\ell$.  Starting from the four
fixed orbit types, it can therefore reach only finitely many partition types,
with their number depending on $\ell$ but not on $N$.  Summing the squares of
the preceding coefficient bounds over this finite set gives
\[
 \norm{(K_{\ell,N}-\mu_{\ell,N})w_{\ell,N}}^2
 =O_\ell(N^{2\ell-3}),
\]
which is \eqref{eq:residual}.
\end{proof}

We now combine sector selection, confinement, and the residual estimate to
identify the bottom of the full operator.

\begin{theorem}
\label{thm:general}
For every fixed $\ell\geq1$, as $N\to\infty$,
\begin{equation}\label{eq:full-expansion}
 C_\ell(N)=2^\ell N^\ell+\gamma_\ell N^{\ell-1}
 +\delta_\ell N^{\ell-2}+O_\ell(N^{\ell-3}),
\end{equation}
where $\gamma_\ell$ is given by \eqref{eq:gamma} and $\delta_\ell$ by
\eqref{eq:delta}.  The remainder is bounded in absolute value by
$A_\ell N^{\ell-3}$ for $N\geq N_0(\ell)$.
\end{theorem}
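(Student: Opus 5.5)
The plan is to combine the upper bound from Rayleigh--Ritz with a two-sided localization obtained from Temple's inequality, working inside the $\BN$-invariant sector.

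\emph{Upper bound.} Since $w_{\ell,N}$ lies in the form core, Proposition~\ref{prop:delta} gives $C_\ell(N)\le\mu_{\ell,N}=2^\ell N^\ell+\gamma_\ell N^{\ell-1}+\delta_\ell N^{\ell-2}+O_\ell(N^{\ell-3})$.

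\emph{Lower bound.} Here the goal is to show that $\inf\sigma(K_{\ell,N})$ equals the lowest eigenvalue $E_0$ of $K_{\ell,N}$ restricted to the invariant sector $\cH^{\mathrm{inv}}$. This eigenvalue should be simple, and the next point of the invariant spectrum should lie above $\Lambda_N+cN^\ell$.

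\begin{itemize}
\item By \eqref{eq:coarselower} and the trial-space bound, every spectral point below $\Lambda_N+CN^{\ell-1}$ lies in the Feshbach window of Proposition~\ref{prop:splitting}.
\item Proposition~\ref{prop:confinement} gives $Q_1K_{\ell,N}Q_1\ge 4^\ell N^\ell(1-o(1))$. Lemma~\ref{lem:schur} then shows that the spectral projection below $b_Q:=4^\ell N^\ell/2$ has rank at most $2N$.
\item Every eigenvalue in this region is a root of one of the three scalar equations in Proposition~\ref{prop:splitting}. A root can lie below $\Lambda_N+CN^{\ell-1}$ only inside the window. By \eqref{eq:sector-gap}, the lowest point of $\sigma(K_{\ell,N})$ is the simple invariant root $z_{\mathrm{triv},N}$.
\item The invariant sector meets the first shell only in $\cE_N^{\mathrm{triv}}$, which is one-dimensional. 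Applying Lemma~\ref{lem:schur} to $K_{\ell,N}|_{\cH^{\mathrm{inv}}}$ with $P$ the projection onto $\psi_{(1)}$ gives at most one invariant eigenvalue below $b_Q$. Hence the second point of the invariant spectrum satisfies $E_1\ge b_Q$, so $E_1-\mu_{\ell,N}\ge cN^\ell$ for large $N$.
\end{itemize}

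\emph{Temple step.} Let $\varepsilon=\norm{(K_{\ell,N}-\mu_{\ell,N})w_{\ell,N}}$. Lemma~\ref{lem:residual} gives $\varepsilon=O_\ell(N^{\ell-3/2})$. Since $w_{\ell,N}$ is an eigenvector of the compression, its Rayleigh quotient is $\mu_{\ell,N}$, and $\norm{(K-\mu)w}^2=\norm{Kw}^2-\mu^2$. Temple's inequality in the invariant sector, with the lower bound $E_1\ge b_Q>\mu_{\ell,N}$, then yields
\[
 E_0\ge\mu_{\ell,N}-\frac{\varepsilon^2}{b_Q-\mu_{\ell,N}}
 =\mu_{\ell,N}-O_\ell\!\left(\frac{N^{2\ell-3}}{N^\ell}\right)
 =\mu_{\ell,N}-O_\ell(N^{\ell-3}).
\]
Combined with $C_\ell(N)=E_0\le\mu_{\ell,N}$, this gives \eqref{eq:full-expansion}.

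\emph{Technical conditions and main obstacle.} Temple's inequality requires $w_{\ell,N}\in\operatorname{Dom}(K_{\ell,N})$. This holds because $w_{\ell,N}$ is finitely supported, so $P\cH_{0,N}\subset\operatorname{Dom}(K_{\ell,N})$ as shown in Section~2. The restriction to $\cH^{\mathrm{inv}}$ is legitimate because $\BN$ reduces $K_{\ell,N}$. The step I expect to be most delicate is the invariant-sector gap $E_1\ge cN^\ell$. Naively one might try to use only $Q_1$-confinement on the whole space, but that space contains roughly $2N$ low eigenvalues, namely the nontrivial first-shell branches, lying only $O(N^{\ell-1})$ above $E_0$. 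Using those as the Temple gap would give only a weaker bound of order $\varepsilon^2/N^{\ell-1}=O(N^{\ell-2})$. The argument must therefore apply Temple within $\cH^{\mathrm{inv}}$, where the first-shell cluster collapses to the single vector $\psi_{(1)}$. It also has to check that Proposition~\ref{prop:confinement}, restricted to invariant vectors orthogonal to $\psi_{(1)}$, still applies, which holds because those vectors vanish on $S_1$.
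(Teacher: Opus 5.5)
Your route is essentially the paper's own, but one threshold choice makes the argument fail for $\ell=1$. The shared steps are: Rayleigh--Ritz for the upper bound; the coarse bound together with the first-shell Feshbach analysis of Proposition~\ref{prop:splitting} to place the global bottom in the invariant sector; the observation that invariant vectors orthogonal to $\psi_{(1)}$ vanish on $S_1$, so confinement leaves at most one invariant eigenvalue below the threshold; and Temple's inequality with the $O_\ell(N^{\ell-3/2})$ residual of Lemma~\ref{lem:residual}.

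The problem is that you weaken the confinement bound to $b_Q:=4^\ell N^\ell/2=2^{2\ell-1}N^\ell$ and then assert $E_1-\mu_{\ell,N}\ge cN^\ell$. Since $\Lambda_N=2^\ell N^\ell$, this assertion requires $2^{2\ell-1}>2^\ell$, i.e.\ $\ell\ge2$. For $\ell=1$ one has $b_Q=2N=\Lambda_N$ exactly, so $b_Q-\mu_{1,N}=4+O(N^{-1})$ is only of order $N^{\ell-1}$. Your Temple bound then becomes $\varepsilon^2/(b_Q-\mu_{1,N})=O(N^{-1})$ instead of $O(N^{-2})$. This is precisely the loss you warn against in your final paragraph, and it is enough to lose the coefficient $\delta_1=-20/3$.

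The repair is immediate: do not halve the threshold. Proposition~\ref{prop:confinement} gives the lower form bound $b_{\ell,N}=4^\ell N^\ell(1-C_\ell N^{-1/2})$ on invariant vectors vanishing on $S_1$, which is the threshold the paper uses. Then
\[
 b_{\ell,N}-\mu_{\ell,N}\ge(4^\ell-2^\ell)N^\ell-O_\ell(N^{\ell-1/2})\ge c_\ell N^\ell
\]
for every $\ell\ge1$. Any fixed fraction $\theta\,4^\ell N^\ell$ with $2^{-\ell}<\theta<1$, for instance $\theta=3/4$, would serve equally well. With this threshold in your second and fourth bullets and in the Temple step, the argument is complete and coincides with the paper's proof.
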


\begin{proof}
Let $\cH_N^{\mathrm{triv}}$ be the hyperoctahedrally invariant subspace.  The
closed-form symmetry argument in Section~2 shows that every isotypic component
reduces the Friedrichs operator $K_{\ell,N}$.

\smallskip\noindent\emph{Step (a): representations absent from the first
shell.}
If an irreducible representation has no copy in $P_1\cH_{0,N}$, its entire
isotypic component is contained in $Q_1\cH_{0,N}$.  Proposition
\ref{prop:confinement} therefore places its spectrum above
\[
 b_{\ell,N}=2^\ell\Lambda_N(1-O_\ell(N^{-1/2})),
\]
which is separated from $\Lambda_N$ by order $N^\ell$.  Such a representation
cannot contribute to the first-shell window.

\smallskip\noindent\emph{Step (b): one spectral root in each representation
present on the first shell.}
The coarse bound \eqref{eq:coarselower} applies to each restriction meeting
$P_1\cH_{0,N}$.  The unit vectors
\[
 p_{\mathrm{triv}}=\psi_{(1)},\qquad
 p_{\mathrm{ev}}=\frac12
 (\delta_{e_1}+\delta_{-e_1}-\delta_{e_2}-\delta_{-e_2}),\qquad
 p_{\mathrm{odd}}=\frac1{\sqrt2}(\delta_{e_1}-\delta_{-e_1})
\]
belong respectively to the trivial, even mean-zero, and odd first-shell
components.  Equation \eqref{eq:blocksPKP} gives a Rayleigh quotient
$\Lambda_N+O_\ell(N^{\ell-1})$ for each of them.  Thus the infimum of
the spectrum in each of the three isotypic sectors meeting
$P_1\cH_{0,N}$ lies in the window
$|z-\Lambda_N|\leq A_\ell N^{\ell-1}$.  In that window the $Q_1$ compression
is boundedly invertible, so Lemma~\ref{lem:schur} shows that the spectrum is
discrete and is obtained, with multiplicity, from the effective spectral
equation.  Proposition~\ref{prop:splitting} gives exactly one root in each of
the trivial, even mean-zero, and odd representations, with respective
asymptotics \eqref{eq:roottriv}--\eqref{eq:rootnontriv}.  Step~(a) excludes
all remaining representations from this window.

\smallskip\noindent\emph{Step (c): sector selection and rank one.}
Equation \eqref{eq:sector-gap} is strictly positive, so the global spectral
bottom belongs to $\cH_N^{\mathrm{triv}}$ for all sufficiently large $N$:
the difference of the two leading corrections is a positive multiple of
$N^{\ell-1}$, whereas both remainders are $o(N^{\ell-1})$.

Inside $\cH_N^{\mathrm{triv}}$, the first-shell subspace is one-dimensional.
Its orthogonal complement consists of invariant functions vanishing on $S_1$;
by Proposition~\ref{prop:confinement}, its quadratic form is bounded below by
the number $b_{\ell,N}$ defined in Step~(a).
Let $e_N=\psi_{(1)}$ in this sector.  If the spectral projection of the
restricted operator on $(-\infty,b_{\ell,N})$ had dimension at least two, its
range would contain a non-zero vector $f$ orthogonal to $e_N$.  In the
invariant sector, orthogonality to $e_N$ is equivalent to vanishing on $S_1$,
because invariant first-shell functions form the one-dimensional space
$\operatorname{span}\{e_N\}$.  The spectral theorem would give
$\overline{\mathfrak k}_{\ell,N}[f]<b_{\ell,N}\norm f^2$, contradicting the preceding form
bound.  Hence this spectral projection has dimension at most one.  For all
sufficiently large $N$,
$\mu_{\ell,N}=\Lambda_N+O_\ell(N^{\ell-1})<b_{\ell,N}$; the trial vector
$w_{\ell,N}$ has Rayleigh quotient $\mu_{\ell,N}$ and shows that the
projection is non-zero.  It therefore consists of
one isolated simple eigenvalue, which is the bottom of the invariant
restriction and, by the sector comparison above, the bottom of the full
operator.

We use Temple's inequality in the following standard form
\cite{Temple}: if $H$ is self-adjoint, $w$ is a unit vector,
$\mu=\ip{Hw}{w}$, and the spectrum of $H$ above its lowest eigenvalue is
contained in $[b,\infty)$ with $\mu<b$, then
\[
 \lambda_0(H)\geq
 \mu-\frac{\norm{(H-\mu)w}^2}{b-\mu}.
\]
The preceding one-dimensional spectral-projection argument verifies precisely
this hypothesis with $H=K_{\ell,N}|_{\cH_N^{\mathrm{triv}}}$ and
$b=b_{\ell,N}$.  Moreover,
$\ip{K_{\ell,N}w_{\ell,N}}{w_{\ell,N}}=\mu_{\ell,N}$ because
$w_{\ell,N}$ is an eigenvector of the compression and is supported on its
four orbit types.  Applying the inequality to $w_{\ell,N}$, Proposition
\ref{prop:delta} and Lemma~\ref{lem:residual} give
\[
 \mu_{\ell,N}
 -\frac{O_\ell(N^{2\ell-3})}{b_{\ell,N}-\mu_{\ell,N}}
 \leq C_\ell(N)\leq\mu_{\ell,N}.
\]
Here $b_{\ell,N}-\mu_{\ell,N}\geq c_\ell N^\ell$, and hence
\[
 C_\ell(N)=\mu_{\ell,N}+O_\ell(N^{\ell-3}),
\]
because the squared residual is $O_\ell(N^{2\ell-3})$.  Equation
\eqref{eq:full-expansion} now follows from \eqref{eq:ritzexp}.  All constants
used in the estimates depend only on the fixed order $\ell$; enlarging them
once gives the stated constants $A_\ell$ and $N_0(\ell)$.
\end{proof}

Substitution of the first three orders gives concrete special cases.

\begin{corollary}
\begin{align*}
 C_1(N)&=2N-4-\frac{20}{3N}+O(N^{-2}),\\
 C_2(N)&=4N^2-\frac{46}{3}N-\frac{7729}{540}+O(N^{-1}),\\
 C_3(N)&=8N^3-\frac{324}{7}N^2-\frac{30020}{4459}N+O(1).
\end{align*}
\end{corollary}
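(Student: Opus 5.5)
The corollary is a direct specialization of Theorem~\ref{thm:general}, so the plan is to substitute $\ell=1,2,3$ into \eqref{eq:full-expansion}. I will use the values of $\gamma_\ell$ recorded in Proposition~\ref{prop:ritz} and the values of $\delta_\ell$ recorded in Proposition~\ref{prop:delta}. The remainder $O_\ell(N^{\ell-3})$ becomes $O(N^{-2})$, $O(N^{-1})$ and $O(1)$ respectively. Since $\ell$ is fixed in each line, the $\ell$-dependence of the constant is harmless.

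The leading terms $2^\ell N^\ell$ are immediate. For the first correction, \eqref{eq:gamma} gives the following values:
\begin{itemize}
\item $\ell=1$: the first term vanishes, leaving $\gamma_1=-4/1=-4$.
\item $\ell=2$: $\gamma_2=6-16\cdot4/3=-46/3$.
\item $\ell=3$: $\gamma_3=36-9\cdot64/7=-324/7$.
\end{itemize}
These match the values listed in Proposition~\ref{prop:ritz}.

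For the third coefficient, I will substitute directly into \eqref{eq:delta}. For $\ell=1$ we have $c_2=c_3=c_4=0$, and the bracket reduces to
\[
 2\cdot1+\frac{4}{1}-\frac{9}{4}\cdot\frac{4}{1}\cdot\frac{1}{1}\cdot\frac{4}{2}\cdot\frac14\cdot 4\ldots
\]
Rather than expanding this by hand, I will organize the computation through the closed form \eqref{eq:hclosed} evaluated at $p_2=2$, $p_3=3$. This gives $h=a_4+2AB/(p_2-1)+A^4/(p_2-1)^3-\bigl(AG/(p_2-1)\bigr)^2/(p_3-1)-E^2/(p_2^2-1)$, multiplied by $2$. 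I will then check that the result equals $-20/3$.

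The cases $\ell=2,3$ are handled the same way. I will insert $c_2,c_3,c_4$, $p_2=2^\ell$ and $p_3=3^\ell$ into \eqref{eq:delta}, bring everything over the common denominator $(2^\ell-1)^3(3^\ell-1)(2^{2\ell}-1)$, and reduce the fractions. This should yield $-7729/540$ and $-30020/4459$. Since Proposition~\ref{prop:delta} already states these values, invoking it suffices; the direct substitution serves only as a consistency check.

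The only real risk is arithmetic: sign errors among the seven terms of \eqref{eq:delta}, and getting the overall factor $2^\ell$ in the right place. I would guard against these by cross-checking each $\delta_\ell$ twice, once through the scaled quantity $h$ from \eqref{eq:hclosed} multiplied by $p_2$ and once from \eqref{eq:delta} directly.
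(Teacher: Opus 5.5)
Your proposal is correct and is the same as the paper's proof: the corollary is Theorem~\ref{thm:general} specialized to $\ell=1,2,3$. The values $\gamma_\ell,\delta_\ell$ come from \eqref{eq:gamma} and \eqref{eq:delta}, or equivalently from Propositions~\ref{prop:ritz} and~\ref{prop:delta}, and the remainder $O_\ell(N^{\ell-3})$ becomes $O(N^{-2})$, $O(N^{-1})$, $O(1)$. The one blemish is the displayed expansion of the $\ell=1$ bracket that you abandon, which is garbled; the correct bracket is $2+4-9-\tfrac13=-\tfrac{10}{3}$, which is exactly what your route through \eqref{eq:hclosed} gives, so $\delta_1=-\tfrac{20}{3}$.
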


\begin{proof}
For $\ell=1$, formulas \eqref{eq:gamma} and \eqref{eq:delta} give
$\gamma_1=-4$ and $\delta_1=-20/3$; the remainder in
\eqref{eq:full-expansion} is $O(N^{-2})$.  For $\ell=2$, they give
$\gamma_2=-46/3$ and $\delta_2=-7729/540$, with remainder $O(N^{-1})$.
For $\ell=3$, they give $\gamma_3=-324/7$ and
$\delta_3=-30020/4459$, with remainder $O(1)$.  Inserting these values into
\eqref{eq:full-expansion} proves the three formulas.
\end{proof}

\section{Scope and further questions}

All asymptotic statements above are for fixed $\ell$ as $N\to\infty$; no
uniformity is asserted when $\ell$ grows with $N$.  The argument is analytic and
representation-theoretic, combining dimension-uniform estimates with exact
finite-dimensional orbit identities.

The formula depends on taking the full-lattice power $L_N^\ell$ before imposing
$u(0)=0$.  For a punctured Dirichlet Laplacian, the rank-one matrix
$\mathcal J_N$ is absent and the coefficient of $N^{\ell-1}$ changes.  It would
be interesting to carry out the analogous expansion in that convention and on
other highly symmetric graphs.  A second open problem is to obtain estimates
uniform in a growing order $\ell$.

\appendix

\section{Path bounds and the four-orbit calculation}

\subsection{Full-shell block estimates}\label{app:paths}

The orbit estimate in Lemma~\ref{lem:pathbound} has the following operator-norm
counterpart.  It is used in the two Feshbach reductions.

\begin{lemma}\label{lem:blockpath}
Fix finitely many orbit types and an integer $J$.  Decompose the corresponding
coordinate shells into hyperoctahedral orbits and let $E_\lambda$ denote the
full coordinate subspace supported on the orbit $\cO_\lambda$.  For
$0\leq j\leq J$,
\[
 \norm{E_\mu A_N^jE_\lambda}
 \leq C_JN^{a_j(\lambda,\mu)/2},
\]
with the same dimensional-edge count as in Lemma~\ref{lem:pathbound}.  The
constant is uniform over all irreducible components of the shell spaces.
\end{lemma}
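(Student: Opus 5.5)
We prove Lemma~\ref{lem:blockpath} by factoring $E_\mu A_N^jE_\lambda$ through the orbit subspaces and bounding each one-step block separately. Since $A_N$ only connects adjacent orbits, inserting $I=\sum_\nu E_\nu$ between consecutive factors gives
\[
 E_\mu A_N^jE_\lambda=\sum_{\pi}E_{\nu_j}A_NE_{\nu_{j-1}}\cdots E_{\nu_1}A_NE_{\nu_0},
\]
where $\pi=(\nu_0=\lambda,\nu_1,\ldots,\nu_j=\mu)$ runs over length-$j$ paths in the orbit graph, including the empty partition when a path visits the origin. For fixed $J$ and fixed initial types, only finitely many orbit types occur, and the number of paths is bounded by $C_J$. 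By submultiplicativity of the operator norm, it then suffices to show two one-step bounds: $\norm{E_\nu A_NE_{\nu'}}\leq C_JN^{1/2}$ for a dimensional edge, and $\norm{E_\nu A_NE_{\nu'}}\leq C_J$ for an internal edge. Summing over paths then gives $C_JN^{a_j(\lambda,\mu)/2}$, and the sum is empty when no path exists.

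For the one-step bounds, let $B=E_\nu A_NE_{\nu'}$, viewed as the bipartite incidence operator between $\cO_{\nu'}$ and $\cO_\nu$. Each point of $\cO_{\nu'}$ has exactly $d_{\nu'\nu}$ neighbours in $\cO_\nu$, and each point of $\cO_\nu$ has exactly $d_{\nu\nu'}$ neighbours in $\cO_{\nu'}$. Schur's test, that is, the bound $\norm B^2\leq\norm B_{1\to1}\norm B_{\infty\to\infty}$, gives
\[
 \norm B\leq\sqrt{d_{\nu\nu'}d_{\nu'\nu}}.
\]
The proof of Lemma~\ref{lem:adjacency} computes these degrees. A dimensional edge has product $2(N-k)(m_1+1)\leq C_JN$. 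An internal edge has product $m_a(m_{a+1}+1)\leq C_J$. The empty orbit fits the same scheme with degrees $2N$ and $1$. This gives the two one-step bounds.

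Uniformity over irreducible components follows from the same argument. Every $E_\nu$ commutes with $\BN$, so restricting to an isotypic or irreducible component $V$ can only decrease norms: $\norm{E_\mu A_N^jE_\lambda|_V}\leq\norm{E_\mu A_N^jE_\lambda}$. The constant therefore does not depend on the component. If the endpoint factors $R^\ell$ are included as in Lemma~\ref{lem:shellblocks}, they are constant on each orbit and bounded by $s_\lambda^{\ell/2}$, so they are absorbed into $C_J$.

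The main point needing care is that the operator norm, unlike the normalized orbit-vector entry of Lemma~\ref{lem:pathbound}, could in principle be larger on non-invariant vectors. The Schur test settles this without any representation theory. The only other thing to check is that multiplicities $m_a$ and part counts $k$ stay bounded along paths of length at most $J$, which holds because each step changes them by at most one.
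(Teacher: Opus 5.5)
Your proposal is correct and follows the paper's proof essentially step for step. The Schur test gives $\norm{E_\nu A_N E_{\nu'}}\leq\sqrt{d_{\nu\nu'}d_{\nu'\nu}}$ on each orbit edge, inserting orbit projections between the factors of $A_N^j$ and using submultiplicativity over the finitely many orbit paths yields the $N^{a_j(\lambda,\mu)/2}$ bound, and uniformity over irreducible components follows because compressing to an invariant subspace cannot increase the norm. Your explicit treatment of the origin as an intermediate orbit (with degrees $2N$ and $1$) and of the endpoint factors $R^\ell$ is a harmless clarification of points the paper leaves implicit.
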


\begin{proof}
For a single orbit edge, the bipartite incidence matrix has at most
$d_{\lambda\mu}$ non-zero entries in each row and at most
$d_{\mu\lambda}$ in each column.  The Schur test gives
\[
 \norm{E_\mu A_NE_\lambda}
 \leq\sqrt{d_{\lambda\mu}d_{\mu\lambda}}.
\]
Indeed, the Schur test bounds the norm by the square root of the product of
the maximal absolute row sum and maximal absolute column sum; all non-zero
entries of this incidence matrix equal one.  The counting in the proof of
Lemma~\ref{lem:adjacency} shows that the product of the two degrees is
$O_J(N)$ for a dimensional edge and $O_J(1)$ for an internal edge.  Hence the
corresponding block norms are $O_J(N^{1/2})$ and $O_J(1)$, respectively.

Insert the orbit projections between successive factors of $A_N^j$:
\[
 E_\mu A_N^jE_\lambda
 =\sum_{\lambda=\nu_0,\ldots,\nu_j=\mu}
 E_{\nu_j}A_NE_{\nu_{j-1}}\cdots
 E_{\nu_1}A_NE_{\nu_0},
\]
where the sum is over length-$j$ orbit paths.  Submultiplicativity of the
operator norm bounds a path with $a$ dimensional edges by $C_JN^{a/2}$.
Only finitely many orbit types and paths can be reached for fixed $J$, so
summing and maximizing $a$ proves the assertion.  If an invariant subspace is
inserted on both sides, the resulting operator is a compression of the full
block and therefore has no larger norm.  This proves the uniformity over all
irreducible components.
\end{proof}

Lemma~\ref{lem:shellblocks} applies this estimate after summing every orbit
reachable from $S_1$ or $(1,1)$ in at most $\ell$ steps.  Since that reachable
set is finite for fixed $\ell$, the constants remain uniform in $N$ and in the
irreducible component.  This is the operator-norm input used in both Schur
reductions.

\subsection{Exact moments and matrix expansion}\label{app:matrix}

Index the selected orbits by
\[
 \lambda_0=(1),\qquad \lambda_1=(1,1),\qquad
 \lambda_2=(1,1,1),\qquad \lambda_3=(2).
\]
Successive use of Lemma~\ref{lem:adjacency} gives
\begin{align}
 \ip{A_N\psi_{\lambda_0}}{\psi_{\lambda_1}}
 &=2\sqrt{N-1},
 &\ip{A_N\psi_{\lambda_1}}{\psi_{\lambda_2}}
 &=\sqrt{6(N-2)},\label{eq:exactedges}\\
 \ip{A_N\psi_{\lambda_0}}{\psi_{\lambda_3}}&=1,
 &\ip{A_N^2\psi_{\lambda_0}}{\psi_{\lambda_2}}
 &=2\sqrt{6(N-1)(N-2)}.\nonumber
\end{align}
The moments needed through weighted path cost $2$ are
\begin{align}
 \ip{A_N^2\psi_{\lambda_0}}{\psi_{\lambda_0}}
 &=6N-3,\label{eq:moments1}\\
 \ip{A_N^4\psi_{\lambda_0}}{\psi_{\lambda_0}}
 &=60N^2-90N+40,\label{eq:moments2}\\
 \ip{A_N^3\psi_{\lambda_0}}{\psi_{\lambda_1}}
 &=24(N-1)^{3/2},\label{eq:moments3}\\
 \ip{A_N^2\psi_{\lambda_1}}{\psi_{\lambda_1}}
 &=10N-14.\label{eq:moments4}
\end{align}
We give the exact path calculation.  The one-step neighbours of $(1)$ are the
origin, $(1,1)$, and $(2)$, with coefficients
$\sqrt{2N}$, $2\sqrt{N-1}$, and $1$.  The sum of their squares is
$2N+4(N-1)+1=6N-3$, proving \eqref{eq:moments1}.  A second adjacency step
gives the exact decomposition
\begin{align*}
 A_N^2\psi_{(1)}={}&(6N-3)\psi_{(1)}
 +2\sqrt{6(N-1)(N-2)}\,\psi_{(1,1,1)}\\
 &+3\sqrt{2(N-1)}\,\psi_{(2,1)}+\psi_{(3)}.
\end{align*}
The four orbit vectors are orthonormal.  Taking the squared norm of this
identity gives
\begin{align*}
 \ip{A_N^4\psi_{(1)}}{\psi_{(1)}}
 &=(6N-3)^2+24(N-1)(N-2)+18(N-1)+1\\
 &=60N^2-90N+40,
\end{align*}
which proves \eqref{eq:moments2}.

To compute \eqref{eq:moments3}, take the inner product of the preceding
decomposition with $A_N\psi_{(1,1)}$.  The relevant coefficients from
$(1,1)$ to $(1)$, $(1,1,1)$, and $(2,1)$ are
$2\sqrt{N-1}$, $\sqrt{6(N-2)}$, and $\sqrt2$.  Consequently,
\begin{align*}
 \ip{A_N^3\psi_{(1)}}{\psi_{(1,1)}}
 ={}&(6N-3)2\sqrt{N-1}
 +12(N-2)\sqrt{N-1}\\
 &+6\sqrt{N-1}
 =24(N-1)^{3/2}.
\end{align*}
Finally, the same three neighbours exhaust the adjacency edges from
$(1,1)$.  The sum of the squared coefficients is
\[
 4(N-1)+6(N-2)+2=10N-14,
\]
which proves \eqref{eq:moments4}.

Insert \eqref{eq:exactedges}--\eqref{eq:moments4} into
\[
 R^\ell L_N^\ell R^\ell
 =\sum_{j=0}^\ell(-1)^j\binom\ell j
 (2N)^{\ell-j}R^\ell A_N^jR^\ell.
\]
With $t=N^{-1/2}$ and after division by $(2N)^\ell$, one obtains
\begin{align*}
 \widehat M_{00}
 &=1+\frac32c_2t^2+
 \left(-\frac34c_2+\frac{15}{4}c_4\right)t^4+O_\ell(t^6),\\
 \widehat M_{01}
 &=-\sqrt {p_2}\,\ell t+
 \sqrt {p_2}\left(\frac\ell2-3c_3\right)t^3+O_\ell(t^5),\\
 \widehat M_{02}&=\frac{\sqrt{6p_3}}2c_2t^2+O_\ell(t^4),
 &\widehat M_{03}&=-\frac{p_2\ell}{2}t^2+O_\ell(t^4),\\
 \widehat M_{11}&=p_2+\frac52p_2c_2t^2+O_\ell(t^4),
 &\widehat M_{12}&=-\frac\ell2\sqrt{6p_2p_3}\,t+O_\ell(t^3),\\
 \widehat M_{22}&=p_3+O_\ell(t^2),
 &\widehat M_{33}&=p_2^2+O_\ell(t^2).
\end{align*}
We now check all entries not displayed to leading order.  The weighted
distances give
\[
 \widehat M_{13}=O_\ell(t^3),\qquad
 \widehat M_{23}=O_\ell(t^4).
\]
The first omitted corrections in $\widehat M_{02}$,
$\widehat M_{03}$, and $\widehat M_{11}$ have order $t^4$; those in
$\widehat M_{12}$, $\widehat M_{22}$, and $\widehat M_{33}$ have orders
$t^3$, $t^2$, and $t^2$, respectively.  Since the components with indices
$1,2,3$ begin at orders $t,t^2,t^2$, these omitted terms first enter the
first-coordinate equation at order $t^6$.  Equivalently, their first closed
perturbation paths have total order at least six.  Thus every coefficient
needed through order $t^4$ appears in the displayed matrix.

It remains to exclude an eigenvalue term of order $t^5$.  Every exact matrix
entry is a finite sum over orbit paths.  After the substitution $N=t^{-2}$,
each dimensional edge factor takes the form
$\sqrt{N-k}=t^{-1}\sqrt{1-kt^2}$, while internal edge factors are independent
of $N$.  Thus every entry of $\widehat M(t)$ is a finite sum of powers of $t$
multiplied by functions analytic in $t^2$ near zero.  In particular,
$\widehat M(t)$ is an analytic matrix family near $t=0$, so the simple
eigenvalue at $t=0$ has an analytic eigenvalue branch.

We now use a gauge symmetry of this analytic family.  A length-$j$ path with
$a$ dimensional edges contributes, after division by $(2N)^\ell$, a term
\[
 t^{2j-a}F(t^2),
\]
where $F$ is analytic near zero.  If
$k(\lambda)$ denotes the number of parts of a partition, then
\[
 a\equiv k(\lambda)-k(\mu)\pmod2
\]
for every path from $\lambda$ to $\mu$.  Define the diagonal matrix
$\mathcal D$ on the four selected orbits by
$\mathcal D\psi_\lambda=(-1)^{k(\lambda)}\psi_\lambda$.  The preceding parity
identity gives
\[
 \widehat M(-t)=\mathcal D\widehat M(t)\mathcal D.
\]
The eigenvalue issuing from $1$ is simple, so analytic perturbation theory and
this unitary equivalence imply $\lambda(-t)=\lambda(t)$.  Its expansion
contains only even powers.  Consequently the remainder after the $t^4$
coefficient is $O_\ell(t^6)$.
Substitution of
\[
 \lambda=1+gt^2+ht^4+O_\ell(t^6),\qquad
 v=(1,b_1t+b_3t^3,e_2t^2,d_2t^2)^T+O_\ell(t^4)
\]
gives successively
\begin{align*}
 &(p_2-1)b_1=A,\qquad
 (p_3-1)e_2=Gb_1-C,\qquad
 (p_2^2-1)d_2=-E,\\
 &g=a_0-Ab_1,\qquad
 (p_2-1)b_3=-B-(\kappa-g)b_1+Ge_2,\\
 &h=a_4-Ab_3+Bb_1+Ce_2+Ed_2.
\end{align*}
Here $p_2,p_3,c_j,a_0,a_4,A,B,C,E,\kappa,G$ are the quantities introduced in
the proof of Proposition~\ref{prop:delta}.  The equations agree with
\eqref{eq:b1}--\eqref{eq:h}; eliminating their auxiliary variables gives
\eqref{eq:hclosed}, and hence the closed formula \eqref{eq:delta}.  The convention
$\binom\ell j=0$ for $j>\ell$ makes the
calculation valid without modification for $\ell=1,2,3$.

Direct substitution for $\ell=1$ gives
$g=-2$, $h=-10/3$, hence
$\gamma_1=2g=-4$ and $\delta_1=2h=-20/3$.  For $\ell=2$ it gives
$\gamma_2=-46/3$ and $\delta_2=-7729/540$; for $\ell=3$ it gives
$\gamma_3=-324/7$ and $\delta_3=-30020/4459$.

\section*{Acknowledgements}

This work was supported by ANID--FONDECYT, grant No.~1260057.

\end{document}